\algrenewcommand\algorithmicrequire{\textbf{Input:}}
\algrenewcommand\algorithmicensure{\textbf{Output:}}
\title[Dimension-free Bounds for Covariance Estimation with Tensor-Train Structure]{Dimension-free Bounds for Covariance Estimation\\ with Tensor-Train Structure}
\renewcommand{\le}{\leqslant}
\renewcommand{\leq}{\leqslant}
\renewcommand{\ge}{\geqslant}
\newcommand{\eps}{\varepsilon}
\newcommand{\rmd}{\mathrm{d}}
\newcommand{\F}{\mathrm{F}}
\newcommand{\diag}{\operatorname{diag}}
\newcommand{\Tr}{\operatorname{Tr}}
\newcommand{\rmvec}{\operatorname{vec}}
\newcommand{\rank}{\operatorname{rank}}
\newcommand{\E}{\mathbb E}
\newcommand{\event}{\boldsymbol{\mathcal{E}}}
\newcommand{\R}{\mathbb R}
\newcommand{\bbS}{\mathbb S}
\newcommand{\1}{\mathbbm 1}
\newcommand{\bbO}{\mathbb{O}}
\newcommand{\KL}{\mathcal{KL}}
\newcommand{\btheta}{\boldsymbol{\theta}}
\newcommand{\ttm}{\mathtt{m}}
\newcommand{\ttr}{\mathtt{r}}
\newcommand{\ttR}{\mathtt{R}}
\def\argmin{\operatornamewithlimits{argmin}}
\newcommand{\Img}{\operatorname{Im}}
\newcommand{\SVD}{\operatorname{SVD}}
\newcommand{\bb}{\mathbf{b}}
\newcommand{\bx}{\mathbf{x}}
\newcommand{\by}{\mathbf{y}}
\newcommand{\bX}{\mathbf{X}}
\newcommand{\bxi}{\boldsymbol{\xi}}
\newcommand{\bfeta}{\boldsymbol{\eta}}
\newcommand{\cT}{\mathcal{T}}
\newcommand{\cA}{\mathcal{A}}
\newcommand{\cY}{\mathcal{Y}}
\newcommand{\cE}{\mathcal{E}}
\newcommand{\bW}{\mathbf{W}}
\newcommand{\cN}{\mathcal{N}}
\newcommand{\cW}{\mathcal{W}}
\newcommand{\cP}{\mathcal{P}}
\newcommand{\cR}{\mathcal{R}}
\newcommand{\cH}{\mathcal{H}}
\newcommand{\cX}{\mathcal{X}}
\newcommand{\nS}{\Vert \Sigma \Vert}
\newcommand{\cS}{\mathcal{S}}
\newcommand{\tty}{\mathtt{y}}
\newcommand{\ttx}{\mathtt{x}}
\newcommand{\cG}{\mathcal{G}}
\newcommand{\cD}{\mathcal{D}}
\newcommand{\bbL}{\mathbb{L}}
\newcommand{\ttA}{\mathtt{A}}
\newcommand{\bv}{\mathbf{v}}
\def\red#1{{\color{red} #1}}
\newtheorem{Th}{Theorem}[section]
\newtheorem{Lem}[Th]{Lemma}
\newtheorem{Prop}[Th]{Proposition}
\newtheorem{As}[Th]{Assumption}
\let\oldmax\max
\renewcommand{\max}{\oldmax\limits}
\let\oldmin\min
\renewcommand{\min}{\oldmin\limits}
\begin{document}

\maketitle

\begin{abstract}
    We consider a problem of covariance estimation from a sample of i.i.d. high-dimensional random vectors. To avoid the curse of dimensionality, we impose an additional assumption on the structure of the covariance matrix $\Sigma$. To be more precise, we study the case when $\Sigma$ can be approximated by a sum of double Kronecker products of smaller matrices in a tensor train (TT) format. Our setup naturally extends widely known Kronecker sum and CANDECOMP/PARAFAC models but admits richer interaction across modes. We suggest an iterative polynomial time algorithm based on TT-SVD and higher-order orthogonal iteration (HOOI) adapted to Tucker‑2 hybrid structure. We derive non-asymptotic dimension-free bounds on the accuracy of covariance estimation taking into account hidden Kronecker product and tensor train structures. The efficiency of our approach is illustrated with numerical experiments.
\end{abstract}

\begin{keywords}
Kronecker product; CANDECOMP/PARAFAC model; dimension-free bounds; effective rank; concentration inequalities
\end{keywords}


\section{Introduction}

Given $\bX, \bX_1, \dots, \bX_n \in \R^d$ i.i.d. centered random vectors, we are interested in estimation of their covariance matrix $\Sigma = \E \bX \bX^\top \in \R^{d \times d}$. Despite its long history, this classical problem still gets considerable attention of statistical and machine learning communities. The reason is that in modern data mining tasks researchers often have to deal with high-dimensional observations. In such scenarios they cannot rely on classical estimates, for instance, sample covariance
\[
    \widehat\Sigma = \frac1n \sum\limits_{i = 1}^n \bX_i \bX_i^\top,
\]
suffering from the curse of dimensionality. To overcome this issue, statisticians impose additional assumptions on $\Sigma$ in order to exploit the data structure and reduce the total number of unknown parameters. Some recent methodological and theoretical advances in covariance estimation are related with Kronecker product models, which are particularly useful for analysis of multiway or tensor-valued data \citep{werner08, allen10, greenewald13, sun18, guggenberger23}. For example, motivated by multiple input multiple output (MIMO) wireless communications channels, \citet*{werner08} assumed that $\Sigma$ can be represented as a Kronecker product of two smaller matrices $\Phi \in \R^{p \times p}$ and $\Psi \in \R^{q \times q}$, such that $pq = d$:
\begin{equation}
    \label{eq:kronecker_product_model}
    \Sigma
    = \Phi \otimes \Psi =
    \begin{pmatrix}
        \varphi_{11} \Psi & \dots & \varphi_{1p} \Psi \\
        \vdots & \ddots & \vdots \\
        \varphi_{p1} \Psi & \dots & \varphi_{pp} \Psi 
    \end{pmatrix}.
\end{equation}
It is known that (see, for instance, the proof of Theorem 1 in \citep{vanloan93}) $\Sigma$ of form \eqref{eq:kronecker_product_model} can be reshaped into a rank-one matrix using an isometric rearrangement (or permutation) operator $\cP : \R^{pq \times pq} \rightarrow \R^{p^2 \times q^2}$ (see \cite[Definition 2.1]{puchkin24a}). Based on this fact, \citeauthor*{werner08} suggested to estimate $\cP(\Sigma)$ applying singular value decomposition to $\cP(\widehat \Sigma)$ and showed that this estimate is asymptotically efficient in the Gaussian case. They called this approach covariance matching. This idea was further developed by \citep{tsiligkaridis13, masak22, puchkin24a}, who considered the sum of Kronecker products model
\begin{equation}
    \label{eq:sum_of_kronecker_products_model}
    \Sigma = \sum\limits_{k = 1}^K \Phi_k \otimes \Psi_k,
\end{equation}
where $\Phi_1, \Psi_1, \dots, \Phi_K, \Psi_K$ are symmetric positive semidefinite matrices, such that $\Phi_j \in \R^{p \times p}$, $\Psi_j \in \R^{q \times q}$ for all $j \in \{1, \dots, K\}$ and $pq = d$.
They studied properties of the permuted regularized least squares (PRLS) estimates. In \citep{tsiligkaridis13, puchkin24a}, the authors regularized the loss function using the nuclear norm
\begin{equation}
    \label{eq:prls_estimate_nuclear}
    \widehat \Sigma^{\circ} = \cP^{-1}(\widetilde R),
    \quad \text{where} \quad
    \widetilde R \in \argmin\limits_{R \in \R^{p^2 \times q^2}} \left\{ \left\|R - \cP(\widehat \Sigma) \right\|_{\F}^2 + \lambda \|R\|_* \right\},
\end{equation}
while \cite{masak22} considered a rank-penalized estimate
\begin{equation}
    \label{eq:prls_estimate_rank}
    \widecheck \Sigma = \cP^{-1}(\widecheck R),
    \quad 
    \widecheck R \in \argmin\limits_{R \in \R^{p^2 \times q^2}} \left\|R - \cP(\widehat \Sigma) \right\|_{\F}^2 + \lambda \, \rank(R).
\end{equation}
Following the covariance matching approach of \cite{werner08}, both \eqref{eq:prls_estimate_nuclear} and \eqref{eq:prls_estimate_rank} reduce the problem of covariance estimation to recovering of a low-rank matrix $\cP(\widehat \Sigma)$ from noisy observations. We would like to note that the estimates $\widehat{\Sigma}^{\circ}$ and $\widecheck{\Sigma}$ admit explicit expressions based on the singular value decomposition of $\cP(\widehat \Sigma)$. For this reason, they can be computed in polynomial time.

In the present paper, we consider a covariance model combining Kronecker product and tensor train (TT) structure. To be more precise, we consider $\Sigma$ of the form
\begin{equation}
    \label{eq:kronecker_tt_model}
    \Sigma = \sum\limits_{j = 1}^J \sum\limits_{k = 1}^K U_j \otimes W_{jk} \otimes V_k,
\end{equation}
where $U_j \in \R^{p \times p}$, $W_{jk} \in \R^{q \times q}$, and $V_k \in \R^{r \times r}$ for any $j \in \{1, \dots, J\}$ and $k \in \{1, \dots, K\}$. The numbers $p$, $q$, and $r$ are assumed to be such that $pqr = d$. Let us note that \eqref{eq:kronecker_tt_model} naturally extends \eqref{eq:sum_of_kronecker_products_model} to the case of three-way data and coincides with it when $J = 1$
and $U_1 = 1$. The rationale for selecting our model is that the TT decomposition \citep{oseledets11} is recognized for its computational efficiency compared to the canonical polyadic (CP) decomposition, while providing a robust framework for representing higher-order tensors. For this reason, tensor trains have numerous applications in hyperspectral imaging \citep{solgi22}, high-dimensional Bayesian inference \citep{tao21}, dimension reduction of high-order Markov processes \citep{zhou2022optimal}, and LLM compression \citep{xu24, huang25}. We would like to note that the CANDECOMP/PARAFAC model
\begin{equation}
    \label{eq:candecomp}
    \Sigma = \sum\limits_{k = 1}^K \Phi_k \otimes \Psi_k \otimes \Omega_k,
\end{equation}
which has recently got considerable attention in the literature (see, for example, \citep{pouryazdian16, greenewald19, yu25} and the references therein), is a particular case of \eqref{eq:kronecker_tt_model} with $J = K$, $W_{jk} = \Psi_k \1(j = k)$, and $U_j = \Phi_j$. 
Following the covariance matching approach, we can reshape a matrix $\Sigma$ of the form \eqref{eq:kronecker_tt_model} into a third-order tensor with low canonical rank. Indeed, given a matrix $A \in \R^{pqr \times pqr}$, let us define a rearrangement operator $\cR : \R^{pqr \times pqr} \rightarrow \R^{p^2 \times q^2 \times r^2}$ componentwise: for any $1 \leq a \leq p^2$, $1 \leq b \leq q^2$, and $1 \leq c \leq r^2$
\begin{equation}
    \label{eq:rearrangement_operator}
    \cR(\Sigma)_{a, b, c} = \Sigma_{(\lceil a / p \rceil - 1) \cdot qr + (\lceil b/q \rceil - 1) \cdot r + \lceil c/r \rceil,((a - 1) \%p) \cdot qr + ((b - 1) \% q) \cdot r + (c - 1)\%r + 1},
\end{equation}
where $y \% x \in \{0, \ldots, x - 1\}$  stands for the residual of $y$ modulo $x$.
Then it is easy to check that
\begin{equation}
    \label{eq:kronecker_tt_rearranged}
    \cR(\Sigma) = \sum\limits_{j = 1}^J \sum\limits_{k = 1}^K \rmvec(U_j) \otimes \rmvec(W_{jk}) \otimes \rmvec(V_k),
\end{equation}
where, for any matrix $A$, $\rmvec(A)$ is a vector obtained by stacking the columns of $A$ together.
Unfortunately, a formal extension of the approach suggested by \cite{tsiligkaridis13} to the CANDECOMP/PARAFAC model will not result in a practical algorithm. The main obstacle is that approximation of the nuclear norm of a tensor is an NP-hard problem \cite{hillar2013most}. The statistical-computational gap was discussed in several papers including \citep{barak2016noisy, zhang2018tensor, han22b, luo22, luo2024tensor}. For this reason, when developing an algorithm for estimation of the covariance matrix \eqref{eq:kronecker_tt_model}, we must take into account both its computational and sample complexities. In the present paper, we extend the approach of \cite{zhang2018tensor} and suggest an iterative procedure similar to the higher-order orthogonal iteration (HOOI) with the notable distinction of utilizing the Tucker-2 representation of the tensor. Our algorithm successfully adapts to the structure \eqref{eq:kronecker_tt_model} but requires less time, than Tucker decomposition and HOOI.

While statisticians (see, for example, \citep{tsiligkaridis13, puchkin24a}) established rates of convergence of the PRLS estimate \eqref{eq:prls_estimate_nuclear}, the CANDECOMP/PARAFAC model \eqref{eq:candecomp} and the more general tensor train model \eqref{eq:kronecker_tt_model} remain underexplored. In Section \ref{section: Analusis of TT-SVD} (see \eqref{eq:tensor_tt_decomposition} below), we discuss that the tensor train model \eqref{eq:kronecker_tt_model} can be represented in a way, which is very similar to the low Tucker rank tensor model (see, for instance, \cite[Definition 2.1]{han22b}). The only difference is that \eqref{eq:tensor_tt_decomposition} includes two factors with orthogonal columns while in Tucker decomposition one has three such factors. For this reason, some bounds on the estimation accuracy of $\Sigma$ of the form \eqref{eq:kronecker_tt_model} with respect to the Frobenius norm follow from the results on tensor estimation \cite{zhang2018tensor, han2022optimal, kumar25}, scalar-on-tensor regression \cite{khavari21, wang25}, and tensor-on-tensor regression \cite{raskutti19, luo2024tensor} with constraints on Tucker ranks. However, these bounds are dimension dependent, while many recent results in covariance estimation establish dimension-free bounds (see, for instance, \cite{koltchinskii17, bunea2015sample, abdalla22, zhivotovskiy24, puchkin24a, puchkin2025sharper}). To our knowledge, the existing dimension-free results on tensor estimation only cover the case of simple rank-one tensors \citep{vershynin2020concentrationinequalitiesrandomtensors, zhivotovskiy24, alghattas25, chen25}.
In the present paper, we derive high-probability dimension-free bounds on the accuracy of estimation of third-order tensors with low TT-ranks and of the covariance matrices, which can be well approximated by \eqref{eq:kronecker_tt_model}.

\paragraph{Contribution.} Our main contribution is a comprehensive non-asymptotic analysis of this estimation procedure. We first derive a general deterministic perturbation bound for our TT-SVD-like algorithm, which may be of independent interest. We then leverage this result to establish a high-probability error bound for our covariance estimator. The final bound clearly decomposes the error into a bias term, related to how well the true $\Sigma$ can be approximated by our model, and a variance term. This variance term scales gracefully with the sample size n, the TT-ranks $(J,K)$, and data-dependent effective dimensions that capture the intrinsic complexity of the covariance structure. To the best of authors knowledge, this is the first work to prove dimension-free estimator for the broad class of tensors, which is parametrized only by ranks $J, K$ (\cite{zhivotovskiy24, alghattas25} considered rank-1 case of CANDECOMP-PARAFAC).

\paragraph{Paper structure.} The rest of the paper is organized as follows. In Section \ref{section: Analusis of TT-SVD}, we present our algorithm and main theoretical guarantees. We illustrate efficiency of the suggested approach with numerical experiments in Section \ref{section: experiments}. All proofs are deferred to Appendix.

\paragraph{Notation.} Given a matrix $M \in \R^{d_1 \times d_2}$, we define its vectorization as
\begin{align*}
    \rmvec(M)_{(a - 1) \cdot d_2 + b} = M_{a,b}, \quad a \le d_1, b \le d_2.
\end{align*}
For a tensor $\cT$ of order $k$ with dimensions $d_1, \ldots, d_k$, we define a multiplication $\times_i$ on mode $i$ by a matrix $M \in \R^{d' \times d_i}$ as follows:
\begin{align*}
    (\cT \times_i M)_{a_1 a_2 \ldots a_i a_{i + 1} \ldots a_k} = \sum_{a_i' = 1}^{d_i} \cT_{a_1 a_2 \ldots a_{i - 1} a_i' a_{i + 1} \ldots a_k} M_{a_i a_i'},
\end{align*}
where $a_j, j \neq i,$  takes values in $\{1, \ldots, d_j\}$ and $a_i$ takes values in $\{1, \ldots, d'\}$.

It will be convenient to assume that random vectors $\bX, \bX_1, \ldots, \bX_n$ lie in a tensor product space $\R^p \otimes \R^q \otimes \R^q$, so $\Sigma = \E \bX \bX^\top$ belongs to the space of SDP Hermitian operators $\cH_+(\R^p \otimes \R^q \otimes \R^r)$ from $\R^p \otimes \R^q \otimes \R^q$  to itself. Then, we will define partial traces of $\Sigma$ as follows. Given linear spaces $L_1, L_2$ and linear operators $X: L_1 \to L_1, Y : L_2 \to  L_2$, we define the partial trace $\Tr_{L_i}$, $i = 1, 2$, w.r.t. $L_i$ as follows:
\begin{align*}
    \Tr_{L_1}(X \otimes Y) = \Tr(X) \cdot Y, \quad \Tr_{L_2}(X \otimes Y) = X \cdot \Tr(Y).
\end{align*}
We extend $\Tr_{L_i}(\cdot)$ to all operators from $L_1 \otimes L_2 \to L_1 \otimes L_2$ by linearity. In our case, for operators from $\cH_+(\R^{p} \otimes \R^q \otimes \R^r)$, we define $\Tr_1(\cdot)$ as a partial trace w.r.t. $\R^p$, $\Tr_2(\cdot)$ as a partial trace w.r.t. $\R^q$ and $\Tr_3(\cdot)$ as a partial trace w.r.t. $\R^r$.
Partial traces will play in important role in our theoretical analysis. We define
\begin{align*}
    \ttr_1(\Sigma) & = \max \left \{ \frac{\Vert \Tr_1(\Sigma) \Vert}{\Vert \Sigma \Vert}, \frac{\Vert \Tr_{1,2}(\Sigma) \Vert}{\Vert \Tr_2(\Sigma) \Vert} \right \}, \qquad
    \ttr_2(\Sigma)  = \max \left \{ \frac{\Vert \Tr_2(\Sigma) \Vert}{\Vert \Sigma \Vert}, \frac{\Vert \Tr_{2,3}(\Sigma) \Vert}{\Vert \Tr_3(\Sigma) \Vert} \right \}, \\
    \ttr_3(\Sigma) & = \max \left \{ \frac{\Vert \Tr_3(\Sigma) \Vert}{\Vert \Sigma \Vert}, \frac{\Vert \Tr_{1,3} (\Sigma) \Vert}{\Vert \Tr_1(\Sigma) \Vert}, \frac{\Vert \Tr_{1,2,3} (\Sigma)\Vert}{\Vert \Tr_{1, 2}(\Sigma) \Vert} \right \},
\end{align*}
where $\Tr_{i_1 i_2 \ldots i_k}$ stands for the composition of the traces $\Tr_{i_1}, \Tr_{i_2}, \ldots, \Tr_{i_k}$.
Quantities $\ttr_1(\Sigma), \ttr_2(\Sigma), \ttr_3(\Sigma)$ play the role of effective dimensions. From \citep[display (23)]{Rastegin_2012}, we know that $\ttr_1(\Sigma) \le p, \ttr_2(\Sigma) \le q, \ttr_3(\Sigma) \le r$. We define them as maxima over ratios of some partial traces to ensure that for any non-empty set $S \subset \{1, 2, 3\}$ we have
\begin{align*}
    \frac{\Vert \Tr_S(\Sigma) \Vert}{\Vert \Sigma \Vert} \le \prod_{s \in S} \ttr_s(\Sigma).
\end{align*}
For a tensor $\cT \in \R^{p^2 \times q^2 \times r^2}$, we introduce the unfolding operator with respect to the first mode as
\[
    \ttm_1(\cT)_{x,y}  = \cT_{x, \lceil y / r^2 \rceil ,(y  - 1)\% r^2 + 1}.
\]
Similarly, the unfolding operators with respect to the second and the third modes are define as follows:
\[
    \ttm_2(\cT)_{x, y}  = \cT_{(y - 1)\% p^2 + 1, x, \lceil y / p^2 \rceil}, 
    \quad
    \ttm_3(\cT)_{x, y}  = \cT_{\lceil y / q^2 \rceil, (y- 1) \% q^2 + 1, x}.
\]

We denote the output of SVD algorithm with hard thresholding via rank $J$ as $SVD_J$. We denote matrices with orthonormal columns of size $\R^{d \times r}$ by $\bbO_{d, r}$.  In what follows, $[m]$ stands for the set of integers from $1$ to $m$.

\section{Main results}
\label{section: Analusis of TT-SVD}

Let us return to the estimation of the covariance matrix $\Sigma$ of the form \eqref{eq:kronecker_tt_model}. As discussed in the introduction, we can reshape $\Sigma$ into a third-order tensor $\cR(\Sigma)$ using the rearrangement operator \eqref{eq:rearrangement_operator}:
\[
    \cR(\Sigma) = \sum\limits_{j = 1}^J \sum\limits_{k = 1}^K \rmvec(U_j) \otimes \rmvec(W_{jk}) \otimes \rmvec(V_k) \in \R^{p^2 \times q^2 \times r^2},
\]
where vectors $\rmvec(U_j)$ are assumed to be linearly independent, as well as vectors $\rmvec(V_k)$. 
Stacking together vectors $\rmvec(U_j)$, $j = 1, \dots, J$ into a matrix $U \in \R^{p^2 \times J}$, vectors $\rmvec(V_k)$, $k = 1, \dots, K$ into a matrix $V \in \R^{r^2 \times K}$ and matrices $W_{jk}$, $j = 1, \dots, J$, $k = 1, \dots, K$ into a three-dimensional tensor $\cW \in \R^{J \times q^2 \times K}$, we can rewrite the above decomposition in the following compact form:
\begin{align}
    \label{eq:tensor_tt_decomposition}
    \cR(\Sigma) = \cW \times_3 V \times_1 U. 
\end{align}
Note that this decomposition is not unique. In particular, multiplying $U$ by an invertible matrix $Q_U \in \R_{J, J}$ from the right and $\cW$ by $Q_{U}^{-1}$ from the first mode does not change the right-hand side of \eqref{eq:tensor_tt_decomposition}. The same true for the factor $V$. Hence, one can assume that the columns of $U$ and $V$ are orthonormal, i.e. $U \in \bbO_{p^2, J}$ and $V \in \bbO_{r^2, K}$. In what follows, we always assume that this is the case. For brevity, we set $d_1 = p^2$, $d_2 = q^2$, and $d_3 = r^2$.

We extend the model~\eqref{eq:kronecker_tt_model} to the case when $\Sigma$ can be approximated by decomposition~\eqref{eq:kronecker_tt_model} up to some error. Then, it is naturally to consider the best $(J, K)$-TT-rank approximation of $\cR(\Sigma)$, which we denote by $\cT^*$.  We denote the misspecification shift  $\cR(\Sigma) - \cT^*$ by $\overline{\cE}$. To approximate $\Sigma$, we aim to recover its structured part $\cT^*$ from the noisy tensor $\cY = \cR(\widehat \Sigma)$, which can be represented as
\begin{align}
\label{eq: tensor noisy model}
    \cY = \cT^* + \cE \in \R^{d_1 \times d_2 \times d_3},  
\end{align}
where the error tensor $\cE$ consists of the approximation part $\overline{\cE}$ and the noise part $\widehat{\cE} = \cR(\widehat \Sigma) - \cR(\Sigma)$.

Since $\cT^*$ has TT-ranks $(J, K)$, it can be decomposed as $\cT^* = \cW^* \times_3 V^* \times_1 U^*$, where $U^* \in \bbO_{p^2, J}$, $V^* \in \bbO_{r^2, K}$ and $\cW^* \in \R^{J \times q^2 \times K}$. This decomposition suggests the following natural algorithm for estimating $\cT^*$ from $\cY$. Using truncated SVD, one estimates the image of $U^*$ which coincides with $\Img \ttm_1(\cT^*)$, then estimates the image of $V^*$ which coincides with $\Img \ttm_3(\cT^*)$, and then project $\cY$ onto the estimated spaces. However, this estimation is not straightforward, and one should apply truncated SVD iteratively to reach reasonable accuracy.  In Section~\ref{section: experiments}, we conduct numerical experiments illustrating that additional iterations indeed improve the estimation. We summarized the resulting procedure as Algorithm \ref{algo: order 3 TT-SVD}. We refer to it as the Hardtth algorithm where the abbreviation HardTTh stands for \textbf{Hard} \textbf{T}ensor \textbf{T}rain \textbf{Th}resholding.

\begin{algorithm}[htbp]
\caption{HardTTh}
\label{algo: order 3 TT-SVD}
\begin{algorithmic}
\Require Tensor $\cY \in \mathbb{R}^{d_1 \times d_2 \times d_3}$, TT-ranks $(J, K)$, number of steps $T$
\Ensure TT-approximation $\widehat{\cT} = \widehat{\cW} \times_3 \widehat{V} \times_1 \widehat{U}$, where $\widehat{U} \in \bbO_{d_1, J}$, $\widehat{V} \in \bbO_{d_2, K}$, $\widehat{\cW} \in \R^{J \times d_2 \times K}$
\State Compute truncated SVD of $\ttm_1(\cY)$ keeping the first $J$ singular values:
\Statex \hspace{\algorithmicindent}$\widehat{U}_0, \Sigma_{0,1}, \widetilde{U}_0 \gets \SVD_J(\ttm_1(\cY))$
\State Compute truncated SVD of $\ttm_3(\cY \times_1 \widehat{U}_0^\top)$ keeping the first $K$ singular values:
\Statex \hspace{\algorithmicindent}$\widehat{V}_0, \Sigma_{0,2}, \widetilde{V}_0 \gets \SVD_K(\ttm_3(\cY \times_1 \widehat{U}_0^\top))$
\State \textbf{for} $t \gets 1$ \textbf{to} $T$ \textbf{do}
\Statex \hspace{\algorithmicindent}$\widehat{U}_t, \Sigma_{t,1}, \widetilde{U}_t \gets \SVD_J(\ttm_1(\cY \times_3 \widehat{V}_{t-1}^\top))$
\Statex \hspace{\algorithmicindent}$\widehat{V}_t, \Sigma_{t,2}, \widetilde{V}_t \gets \SVD_K(\ttm_3(\cY \times_1 \widehat{U}_t^\top))$
\State \textbf{end for}
\State Set $\widehat{U} \gets \widehat{U}_T$, $\widehat{V} \gets \widehat{V}_T$
\State Set $\widehat{\cW} \gets \cY \times_3 \widehat{V}^\top \times_1 \widehat{U}^\top$
\end{algorithmic}
\end{algorithm}

Notice that computational complexity of Algorithm \ref{algo: order 3 TT-SVD} is determined by the complexity of truncated SVD applied to the matricizations. The truncated $SVD_J$ at the first step of HardTTh takes $O( d_1 d_2 d_3 \cdot \min\{d_1, d_2 d_3\})$. Other steps require either $O(Jd_3 d_2 \cdot \min\{d_3, Jd_2\} + Jd_1 d_2 d_3)$ or $O(K d_1 d_2 \min\{d_1, Kd_2\} + Kd_1 d_2 d_3)$ flops, so the overall complexity of the algorithm is
\begin{align*}
    & O((J + K) T d_1 d_2 d_3 + T  K d_1 d_2 \cdot \min\{d_1, Kd_2\} + T Jd_3 d_2 \cdot \min\{d_3, Jd_2\} \\
    & \qquad + d_1 d_2 d_3 \cdot \min\{d_1, d_2 d_3\}).
\end{align*}
If the misspecification is not too large, the number $T$ of iterations can be taken logarithmical in the ambient dimensions, see discussion below after Theorem~\ref{theorem: Sigma estimator performance}.

In practice, randomized truncated SVD could be used~\citep{halko2011finding} or other approximate algorithms~\citep{baglama2005augmented}.

Given the output $\widehat{\cT}$ of Algorithm~\ref{algo: order 3 TT-SVD} applied to $\cY = \cR(\widehat{\Sigma})$, define the estimator $\widetilde{\Sigma}$ of $\Sigma$ as $\widetilde{\Sigma} = \cR^{-1}(\widehat{\cT})$. To analyze rates of convergence for this estimator, we impose some assumption on the distribution of $\bX_i$. While $\widetilde{\Sigma}$ is not guaranteed to be symmetric and positive semidefinite, one can project $\widetilde{\Sigma}$ on the cone of PSD matrices, obtaining an even better estimator of $\Sigma$ in the Frobenius norm.

\begin{As}
    \label{as:orlicz}
    There exists $\omega > 0$, such that the standardized random vector $\Sigma^{-1/2} \bX$ satisfies the inequality
    \begin{equation}
        \label{eq:orlicz_norm_inequality}
        \log \E \exp \left\{ (\Sigma^{-1/2} \bX)^\top V (\Sigma^{-1/2} \bX) - \Tr(V) \right\} \leq \omega^2 \Vert V\Vert_{\F}^2 
    \end{equation}
    for all $V \in \R^{d \times d}$, such that $\Vert V\Vert_{\F} \leq 1 /  \omega$.
\end{As}

In~\citep{puchkin2025sharper}, the authors showed that Assumption~\ref{as:orlicz} holds for a large class of distribution. Indeed, Assumption~\ref{as:orlicz} is a weaker version of the Hanson--Wright inequality. In particular, if the Hanson--Wright inequality is fulfilled for $\Sigma^{-1/2} \bX$, then $\bX$ satisfies Assumption~\ref{as:orlicz}. Therefore, Assumption~\ref{as:orlicz} can be used when $\Sigma^{-1/2}\bX$ is multivariate standard Gaussian, consists of i.i.d. sub-Gaussian random variables, satisfies the logarithmic Sobolev inequality or the convex concentration property~\citep{adamczak_note_2015}. We remark that Assumption~\ref{as:orlicz} is more general than those typically considered in the existing tensor estimation literature, which  focuses on tensors with independent sub-Gaussian noisy entries \citep{zhang2018tensor, han2022optimal, luo2024tensor, xia2022inference, zhou2022optimal, tang2025revisit}.

Under Assumption~\ref{as:orlicz}, we establish the following theorem. We give its proof in Appendix~\ref{section: proof of main theorem}. The proof sketch is given in Appendix~\ref{section: proof sketch}.  
\begin{Th}
\label{theorem: Sigma estimator performance}
Fix $\delta \in (0, 1)$. Grant Assumption~\ref{as:orlicz}. Suppose that singular values $\sigma_J(\ttm_1(\cR(\Sigma))$, $\sigma_K(\ttm_3(\cR(\Sigma))$ satisfy
\begin{align*}
    \sigma_J(\ttm_1(\cR(\Sigma))) & \ge 25 \Vert \ttm_1(\overline{\cE}) \Vert + 768 \omega \Vert \Sigma \Vert \sqrt{\frac{\ttr_1^2(\Sigma) + \ttr_2^2(\Sigma) \ttr_3^2(\Sigma) + \log(6/\delta)}{n}}, \\
    \sigma_K(\ttm_3(\cR(\Sigma))) & \ge 25 \Vert \ttm_3(\overline{\cE}) \Vert + 768 \omega \Vert \Sigma \Vert \sqrt{  \frac{J \ttr_1^2(\Sigma) +  J \ttr_2^2(\Sigma) + \ttr_3^2(\Sigma) + \log (48/\delta)}{n}}.
\end{align*}
Then, we have
    \begin{align*}
        \Vert \widetilde{\Sigma} - \Sigma \Vert_{\F} \le \overline{\bb} + 96 \omega \Vert \Sigma \Vert \sqrt{\frac{J \ttr_1^2(\Sigma) + JK \ttr^2_2(\Sigma) + K \ttr^2_3(\Sigma) + \log(48/\delta)}{n}} + \widetilde{\diamondsuit}_2 + \widetilde{r}_T
    \end{align*}
    with probability at least $1 - \delta$, provided $n \ge \ttR_\delta$, where 
    \begin{align*}
        \overline{\bb} & = \Vert \overline{\cE} \Vert_{\F} + 5 \sqrt{J} \Vert \ttm_1(\overline{\cE}) \Vert + 5 \sqrt{K} \Vert \ttm_3(\overline{\cE}) \Vert,
    \end{align*}
    and $\ttR_\delta$ and remainder terms $\widetilde{\diamondsuit}_2, \widetilde{r}_T$ are defined in Table~\ref{tab: ancillary variables}.
\end{Th}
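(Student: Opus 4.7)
The overall plan is to chain three ingredients: (i) the isometry of the rearrangement operator $\cR$, which reduces the problem to estimating the tensor $\cR(\Sigma)$; (ii) a deterministic perturbation bound for HardTTh (established earlier in the paper, applied to the noisy observation $\cY = \cT^* + \cE$, where $\cE = \widehat{\cE} + \overline{\cE}$ and $\widehat{\cE} = \cR(\widehat{\Sigma} - \Sigma)$); and (iii) high-probability concentration inequalities under Assumption~\ref{as:orlicz} that upgrade every noise norm appearing in (ii) into a dimension-free quantity expressed via the effective dimensions $\ttr_1(\Sigma), \ttr_2(\Sigma), \ttr_3(\Sigma)$. The first reduction is immediate: since $\cR$ only permutes entries, $\|\widetilde{\Sigma}-\Sigma\|_\F = \|\widehat{\cT}-\cR(\Sigma)\|_\F \le \|\widehat{\cT}-\cT^*\|_\F + \|\overline{\cE}\|_\F$, and the second summand is absorbed in $\overline{\bb}$.

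Next I would apply the deterministic perturbation bound for HardTTh to estimate $\|\widehat{\cT}-\cT^*\|_\F$. Such a bound should decompose as a sum of (a) a final-projection residual $\|\widehat{U}^\top \times_1 \widehat{V}^\top \times_3 \cE\|_\F$; (b) subspace-misalignment contributions of the form $\sqrt{J}\,\|\ttm_3((U^*)^\top \times_1 \cE)\|$ and $\sqrt{K}\,\|\ttm_1((V^*)^\top \times_3 \cE)\|$; and (c) a geometric contraction term $\widetilde{r}_T$ controlled once $\sigma_J(\ttm_1(\cR(\Sigma)))$ and $\sigma_K(\ttm_3(\cR(\Sigma)))$ dominate $\|\ttm_1(\cE)\|$ and $\|\ttm_3(\cE)\|$ (this is precisely where the spectral-gap hypotheses on $\sigma_J,\sigma_K$ enter). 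Splitting $\cE = \widehat{\cE} + \overline{\cE}$ inside (a), (b) and using the triangle inequality produces exactly the four summands defining $\overline{\bb}$ from the $\overline{\cE}$-part (with the supremum in the first summand coming from the fact that $\widehat{U},\widehat{V}$ are data-dependent), while the $\widehat{\cE}$-part remains to be controlled stochastically.

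For the stochastic piece I would rewrite each partial-projection norm in terms of bilinear forms in $\bX_i$ after undoing $\cR$. For instance, $(U^*)^\top \times_1 \widehat{\cE}$ corresponds, through $\cR^{-1}$, to a tensor whose entries are empirical averages of quadratic forms $\bX_i^\top A \bX_i - \E \bX_i^\top A \bX_i$ with $A$ a fixed matrix of small rank built from $U^*$. Assumption~\ref{as:orlicz} yields a Hanson--Wright bound for each such entry; combining with an $\varepsilon$-net over the Frobenius-unit sphere of the corresponding target space (of effective dimension $J\ttr_1^2 + JK\ttr_2^2 + K\ttr_3^2$ in the worst case, with appropriate reductions for the "one-sided" projections) gives the asserted variance terms $\sqrt{(J\ttr_1^2+JK\ttr_2^2+K\ttr_3^2+\log(1/\delta))/n}\cdot \|\Sigma\|$ as well as the thresholds imposed on $\sigma_J,\sigma_K$. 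The key inequality $\|\Tr_S(\Sigma)\|/\|\Sigma\| \le \prod_{s \in S}\ttr_s(\Sigma)$ recalled in the preamble is what replaces ambient dimensions $d_i$ by effective dimensions in the partial traces arising from $\Tr(A \Sigma A^\top \Sigma)$ after taking Frobenius norms against projector matrices.

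The main obstacle is the uniform control of the final-projection term $\|\widehat{U}^\top \times_1 \widehat{V}^\top \times_3 \widehat{\cE}\|_\F$: because $\widehat{U},\widehat{V}$ are data-dependent, a naive pointwise Hanson--Wright bound is not enough, and a net argument over $\bbO_{d_1,J}\times\bbO_{d_2,K}$ with metric entropy of order $(J d_1 + K d_3)\log(\cdot)$ would reintroduce ambient dimensions and destroy the dimension-free scaling. The remedy is to compute the net variance using the effective-dimension inequality above, so that the variance proxy for each bilinear form is governed by $\|\Sigma\|^2\,\ttr_1^2\,\ttr_3^2$ rather than $\|\Sigma\|^2 d_1 d_3$, and to absorb the net cardinality into a $\log(1/\delta)$ term via a peeling/slicing argument on the Hanson--Wright tails. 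Once this uniform bound is established, assembling (i)--(iii), choosing $T$ logarithmic in the ambient dimensions so that $\widetilde{r}_T$ becomes negligible, and verifying the spectral-gap conditions on $\sigma_J,\sigma_K$ yields the stated high-probability inequality on $\|\widetilde{\Sigma}-\Sigma\|_\F$.
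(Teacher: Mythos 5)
Your high-level architecture is right: reduce to the tensor via the isometry of $\cR$, invoke a deterministic perturbation bound for HardTTh (the paper's Theorem~\ref{theorem: sensetivity analysis}), split $\cE = \overline{\cE} + \widehat{\cE}$ by the triangle inequality to get $\overline{\bb}$ plus stochastic pieces, and control the stochastic pieces under Assumption~\ref{as:orlicz}. (Minor slip: in your decomposition the factors $\sqrt{J}$ and $\sqrt{K}$ are paired with the wrong norms; in the paper $\sqrt{J}$ multiplies $\alpha_U = \Vert \ttm_1((V^*)^\top \times_3 \cE)\Vert$ and $\sqrt{K}$ multiplies $\alpha_V = \Vert \ttm_3((U^*)^\top \times_1 \cE)\Vert$.)

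The genuine gap is the stochastic step. You correctly diagnose that a raw $\varepsilon$-net over $\bbO_{d_1,J}\times\bbO_{d_3,K}$ has metric entropy of order $Jd_1 + Kd_3$ and would wreck the dimension-free scaling, but your proposed remedy — improving the per-point variance proxy via $\Vert \Tr_S(\Sigma)\Vert \le \Vert\Sigma\Vert\prod_{s\in S}\ttr_s(\Sigma)$ and then ``absorbing the net cardinality into $\log(1/\delta)$ via peeling/slicing'' — does not work. The cardinality of the net and the variance proxy enter the union bound independently: tightening the variance at each net point to $\Vert\Sigma\Vert^2 \ttr_1^2 \ttr_3^2$ does nothing to the $\log N \asymp Jd_1$ factor you still pay for the union bound, and peeling cannot make that factor disappear for constant $\delta$. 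Any net-based route gives bounds with $Jd_1 + JK d_2 + K d_3$ under the square root, not the claimed effective dimensions.

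What the paper actually uses is a variational PAC--Bayes argument (the Donsker--Varadhan / Catoni bound, stated as Lemma~\ref{lem:pac-bayes}). One takes a product Gaussian prior $\mu = \cN(0,\sigma_1^2 I)\otimes\cdots$ on the parameter space and, for each target $(\bx,\by)$ or $(A_1,A_2,A_3)$, a posterior that is a shifted (and suitably truncated) Gaussian with the same covariance. The decisive fact is that $\KL\bigl(\cN(\bx,\sigma^2 I_d),\ \cN(0,\sigma^2 I_d)\bigr) = \Vert\bx\Vert^2/(2\sigma^2)$, which is \emph{independent of the ambient dimension $d$}. Choosing $\sigma_i \asymp \ttr_i^{-1}(\Sigma)$ then makes the complexity penalty $\sim\sum_i l_i\,\ttr_i^2(\Sigma)$ (or $\log|\bbS_i|$, whichever is smaller) rather than $\sum_i l_i d_i$; the partial-trace inequality you cite is used to bound the \emph{log-moment-generating function} term (via $\E g^2(\cdot,\cdot)$ and the Hanson--Wright-type Assumption~\ref{as:orlicz}), not the complexity term. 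This dimension-free KL property of shifted Gaussians, combined with a truncation event $\Upsilon$ to keep the posterior supported where the quadratic-form norm is controlled, is the missing idea; it replaces the net entirely and is what the paper's Lemmas~\ref{lemma: m1 norm upper bound}, \ref{lemma: sup U times noise}, \ref{lemma: U star times noise bound}, \ref{lemma: leading term bound} and Theorem~\ref{theorem: concentration on the empirical process} are built on.
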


\begin{table}[!ht]
    \centering
    \begin{tabular}{|c|c|}
    \hline 
    Variable & Expression \\
    \hline
         $\widetilde{\alpha}_U$ & $ \Vert \ttm_1(\overline{\cE} \times_3 (V^*)^\top) \Vert + 32 \omega \Vert \Sigma \Vert \sqrt{\frac{\ttr_1^2(\Sigma) + K \ttr_2^2(\Sigma) + \log(48/\delta)}{n}}$ \\
    $\widetilde{\beta}_U$ & $ \sup_{\substack{V \in \R^{d_2 \times K} \\ \Vert V \Vert \le 1}} \Vert \ttm_1(\overline{\cE} \times_3 V^\top) \Vert + 32 \omega \Vert \Sigma \Vert \sqrt{\frac{\ttr_1^2(\Sigma) + K \ttr_2^2(\Sigma) + K \ttr_3^2(\Sigma) + \log(48/\delta)}{n}} $
    \\
    $\widetilde{\alpha}_V$ & $ \Vert \ttm_3(\overline{\cE} \times_1 (U^*)^\top) \Vert + 32 \omega \Vert \Sigma \Vert \sqrt{\frac{\ttr_3^2(\Sigma) + J \ttr_2^2(\Sigma) + \log(48/\delta)}{n}}$ \\ 
    $\widetilde{\beta}_V$ & $ \sup_{\substack{U \in \R^{d_1 \times J} \\ \Vert U \Vert \le 1}} \Vert \ttm_3(\overline{\cE} \times_1 U^\top) \Vert + 32 \omega \Vert \Sigma \Vert \sqrt{\frac{\ttr_2^2(\Sigma) + J \ttr_1^2(\Sigma) + J \ttr_3^2(\Sigma) + \log(48/\delta)}{n}}$ \\
    $\widetilde{\diamondsuit}_2$ & $96 \left ( \frac{ \sqrt{K} \widetilde{\beta}_V \widetilde{\alpha}_U}{\sigma_J(\ttm_1(\cR(\Sigma)))} + \frac{\sqrt{J} \widetilde{\beta}_U \widetilde{\alpha}_V}{\sigma_K(\ttm_3(\cR(\Sigma)))}\right )$ \\
    & \\
        $\widetilde{r}_T$ & $ (\sqrt{J} + \sqrt{K}) \cdot \left ( \frac{200 \widetilde{\beta}_V \widetilde{\beta}_U}{\sigma_J(\ttm_1(\cR(\Sigma))) \sigma_K(\ttm_3(\cR(\Sigma)))}\right )^T \times$ \\
    & $ \qquad \quad  \times \left ( \Vert \ttm_1(\overline{\cE}) \Vert + 32 \omega \sqrt{\frac{\ttr_1^2(\Sigma) + \ttr_2^2(\Sigma) \ttr_3^2(\Sigma) + \log(6/\delta)}{n}} \right )$ \\
    & \\ 
    $\ttR_{\delta}$ & $J\ttr_1^2(\Sigma) + JK\ttr_2^2(\Sigma) + K\ttr_3^2(\Sigma) + \ttr_2^2(\Sigma) \ttr_3^2(\Sigma) + \log(48/\delta) $ \\
    \hline
    \end{tabular}
    \caption{List of ancillary variables}
    \label{tab: ancillary variables}
\end{table}

The upper bound on $\Vert \widetilde{\Sigma} - \Sigma \Vert_{\F}$ provided by the above theorem can be decomposed into the bias term $\overline{\bb}$ due to model misspecification, the leading variance term
\begin{align*}
    \widehat{\bv} =  96 \omega \Vert \Sigma \Vert \sqrt{\frac{J \ttr_1^2(\Sigma) + JK \ttr^2_2(\Sigma) + K \ttr^2_3(\Sigma) + \log(48/\delta)}{n}},
\end{align*}
and remainder terms $\widetilde{\diamondsuit}_2, \widetilde{r}_T$. Note that after $T = O(\log (JK\ttr_2(\Sigma)))$ iterations, the variance part 
\begin{align*}
    \widetilde{r}_T^v & = (\sqrt{J} + \sqrt{K}) \left ( \frac{200 \widetilde{\beta}_V \widetilde{\beta}_U}{\sigma_J(\ttm_1(\cR(\Sigma))) \sigma_K(\ttm_3(\cR(\Sigma)))}\right )^T
    \cdot 32 \omega \sqrt{\frac{\ttr_1^2(\Sigma) + \ttr_2^2(\Sigma) \ttr_3^2(\Sigma) + \log(6/\delta)}{n}},
\end{align*}
of $\widetilde{r}_T$
will be dominated by $\widehat{\bv}$.

Compared to the known results in the literature, Theorem~\ref{theorem: Sigma estimator performance} has several advantages. First, it provides dimension-free bounds based on the effective dimensions $\ttr_i(\Sigma) \le d_i$ instead of bounds involving ambient dimensions $d_1, d_2, d_3$ as in vast of literature on high-dimensional tensor estimation (cf.~\citep{zhang2018tensor,qin2025scalable,han2022optimal, tang2025revisit,luo2024tensor}). Moreover, in the case when $\cE$ consists of i.i.d. Gaussian random entries $\cN(0, \sigma^2)$, then, for any estimator $\widehat{\cX}$ based on $\cY = \cX^* + \cE$, there exists $\cX^* = \cW^* \times_1 U^* \times_3 V^*$, $U^* \in \bbO_{d_1, J}$, $V^* \in \bbO_{d_3, K}$, such that
\begin{align*}
    \E \Vert \widehat{\cX} - \cX^* \Vert_{\F}^2 \ge c \cdot \sigma^2 (J d_1 + JK d_2 + K d_3)
\end{align*}
holds for some small enough constant $c$ (see \citep[Theorem~4.2]{zhou2022optimal}), so Theorem~\ref{theorem: Sigma estimator performance} replaces ambient dimensions $d_1 = p^2, d_2 = q^2, d_3 = r^2$ in the minimax optimal rates of tensor estimation with their effective counterparts. Unfortunately, there is strong evidence that, for such convergence rates and any polynomial-time estimator, the spectral gap assumption is unavoidable~\citep{barak2016noisy,hopkins2015tensor,zhang2018tensor,luo2024tensor,diakonikolas2023statistical}.

Second, we point out the following. Set $\ttr(\Sigma) = \Tr(\Sigma) / \Vert \Sigma \Vert$. It is known that, under some assumptions, the sample covariance matrix $\widehat{\Sigma}$ satisfies concentration inequalities
\begin{align*}
    \Vert \widehat{\Sigma} - \Sigma \Vert & \lesssim  \Vert \Sigma \Vert \sqrt{\frac{\ttr(\Sigma) + \log(1/\delta)}{n}}, \qquad 
    \Vert \widehat{\Sigma} - \Sigma \Vert_{\F} \lesssim  \Vert \Sigma \Vert \sqrt{\frac{\ttr^2(\Sigma) + \log(1/\delta)}{n}}
\end{align*}
with probability at least $1 -\delta$  (see~\citep{zhivotovskiy24, bunea2015sample, hsu2012random, puchkin2025sharper}), where $\lesssim$ hides some distribution-dependent constant. Hence, our effective dimensions $\ttr_i(\Sigma)$ naturally extends the effective dimension $\ttr(\Sigma)$  of sample covariance concentration in the unstructured case. Third, while~\citet{puchkin24a} prove dimension-free bounds for the model~\eqref{eq:sum_of_kronecker_products_model} and the estimator $\widehat{\Sigma}^\circ = \cP^{-1}(\widetilde{R})$ defined by~\eqref{eq:prls_estimate_nuclear}, they do not analyze the misspecification case and bound the variance term with probability at least $1 - \delta$ as follows:
\begin{align*}
   \Vert \widehat{\Sigma}^\circ - \Sigma \Vert_{\F}  \lesssim \sqrt{K} \omega \sum_{k = 1}^K \Vert \Phi_k \Vert \Vert \Psi_k \Vert \sqrt{\frac{\max_k \ttr^2(\Psi_k) + \max_k \ttr^2(\Phi_k) + \log(1/\delta)}{n}},
\end{align*}
\textcolor{blue}{so they have} rough variance proxy factor $\sum_{k = 1}^K \Vert \Phi_k \Vert \Vert \Psi_k \Vert$ instead of $\Vert \Sigma \Vert = \Vert \sum_{k = 1}^K \Phi_k \otimes \Psi_k \Vert$. We improve their analysis to establish bounds on the variance involving variance proxy factor $\Vert \Sigma \Vert$ which seems to be tight. 

To highlight the advances of Theorem~\ref{theorem: Sigma estimator performance}, let us discuss how effective dimensions could be small compared to the ambient dimensions. In Appendix~\ref{section: proof of effective dimensions bounds}, we prove the following proposition.
\begin{Prop}
\label{proposition: bounds on effective dimensions}
Suppose that a covariance matrix $\Sigma \in \cH_+(\R^p \otimes \R^q \otimes \R^r)$ can be represented in the form
\begin{align*}
    \Sigma = \sum_{j = 1}^J \sum_{k = 1}^K U_j \otimes W_{jk} \otimes V_k
\end{align*}
for some symmetric positive semidefinite matrices $U_j, W_{jk}, V_k$. Then, we have
\begin{align*}
    \ttr_1(\Sigma) \le J \cdot \max_j \ttr(U_j), \quad \ttr_2(\Sigma) \le JK \cdot \max_{jk} \ttr(W_{jk}), \quad \ttr_3(\Sigma) \le K \cdot \max_k \ttr(V_k).
\end{align*}
\end{Prop}
For example, Proposition~\ref{proposition: bounds on effective dimensions} implies that if the spectra of matrices $U_j, W_{jk}$ and $V_k$ decay quadratically, i.e. if $\max_{jk} \{\sigma_i(U_j)/\Vert U_j \Vert, \sigma_i(W_{jk}) / \Vert W_{jk} \Vert, \sigma_i(V_k) / \Vert V_k \Vert \} \le C_{\sigma} i^{-2}$, then $\ttr_1(\Sigma) \le C_\sigma \pi^2/6 \cdot J, \ttr_2(\Sigma) \le C_{\sigma} \pi^2/6 \cdot JK$ and $\ttr_3(\Sigma) \le C_\sigma \pi^2/6 \cdot K$.

To further illustrate the difference between algebraic and effective ranks and highlight the importance of dimension-free bounds, we conducted the following toy experiment. We trained a 3-layer model with ReLU activation and 5070 parameters on MNIST for 10 epochs and computed its Fisher matrix $I \in \R^{5070 \times 5070}$ over the entire training dataset, which consists of 60000 objects. While the dimensions  $p$, $q$, and $r$ were set to $13$, $15$, and $26$, respectively, the corresponding effective ranks were significantly smaller: $\ttr_1(I) = 0.67$, $\ttr_2(I) = 5$ and $\ttr_3(I) = 20$.

The main drawback of Theorem~\ref{theorem: Sigma estimator performance} is the requirements $\sigma_J(\ttm_1(\cR(\Sigma))) \gtrsim\Vert \Sigma \Vert \sqrt{\ttr_2^2(\Sigma) \ttr_3^2(\Sigma) / n}$ and $n \gtrsim \ttr_2^2(\Sigma) \ttr_3^2(\Sigma)$. Indeed, the theory of tensor estimation by SVD-based algorithms developed in~\citep{zhang2018tensor, tang2025revisit} suggests that the minimax error can be achieved under condition
\begin{align}
\label{eq: zhang conditions}
    \sigma_J(\ttm_1(\cR(\Sigma))) \gtrsim \Vert \Sigma \Vert /n^{1/2} \cdot \left ( d_2 d_3 \right )^{3/8},
\end{align}
and there is strong evidence that the power $3/8$ in the above inequality can not be taken smaller for any polynomial-time algorithm~\citep{barak2016noisy,hopkins2015tensor,zhang2018tensor,luo2024tensor,diakonikolas2023statistical}. However, minimax bounds under conditions of the type~\eqref{eq: zhang conditions} were established  when entries of $\widehat{\cE}$ are i.i.d. Roughly speaking, the estimation error of the singular subspaces corresponds to the impact of the term $\ttm_1(\cE) \ttm_1(\cE)^\top$ in the decomposition
\begin{align*}
    \ttm_1(\cY) \ttm_1(\cY)^\top = \ttm_1(\cT^*) \ttm_1(\cT^*)^\top + \ttm_1(\cT^*) \ttm_1(\cE)^\top + \ttm_1(\cE) \ttm_1(\cT^*)^\top + \ttm_1(\cE) \ttm_1(\cE)^\top
\end{align*}
on the perturbation of eigenspace of $\ttm_1(\cT^*)\ttm_1(\cT^*)^\top$, see~\citep{cai2018rate}. When entries of $\widehat{\cE}$ are i.i.d., we have $\E \ttm_1(\widehat{\cE}) \ttm_1(\widehat{\cE})^\top = \alpha I_{d_1}$ for some scalar $\alpha$, so the error of singular subspaces estimation is determined by deviations of $\ttm_1(\widehat{\cE})^\top \ttm_1(\widehat{\cE})^\top$ from its mean, which can be controlled under conditions like~\eqref{eq: zhang conditions}. This is clearly not the case of our setup, so Algorithm~\ref{algo: order 3 TT-SVD} requires debiasing before applying SVD, which needs extra assumptions on the distribution of $\bX_i$ and is left for future work. 

Comparing Theorem~\ref{theorem: Sigma estimator performance} with results of~\citet{zhang2018tensor}, one can note that, in their paper, upper bounds on the tensor estimation error do not involve second-order terms like $\widetilde{\diamondsuit}_2$. The reason is that their work imposes an assumption $\max\{d_1, d_2, d_3\} \le C \min\{d_1, d_2, d_3\}$ for some absolute constant $C$. Translated to our setup, it means that, assuming $\max_i \ttr_i(\Sigma) \le C \min_{i} \ttr_i(\Sigma)$, the term $\widetilde{\diamondsuit}_2$ is dominated by the leading variance term $\widehat{\bv}$, which is exactly the case.

Finally, we briefly comment on the choice of $J$ and $K$. If $\Sigma$ can be represented by~\eqref{eq:kronecker_tt_model} for some $J, K$, such that
\begin{align*}
    \sigma_J(\ttm_1(\cR(\Sigma)) & \ge C \omega \Vert \Sigma \Vert \sqrt{\frac{\ttr_1^2(\Sigma) + \ttr_2^2(\Sigma) \ttr_3^2(\Sigma) + \log(6/\delta)}{n}}, \\
    \sigma_K(\ttm_3(\cR(\Sigma)) & \ge C \omega \Vert \Sigma \Vert \sqrt{\frac{J \ttr_2^2(\Sigma) + J \ttr_2^2(\Sigma) + \ttr_3^2(\Sigma) + \log(48/\delta)}{n}}
\end{align*}
for some large enough absolute constant $C$, and \textcolor{blue}{the following bounds hold}
\begin{align}
    & \Vert \Sigma \Vert/2 \le \Vert \widehat{\Sigma} \Vert \le 3\Vert \Sigma \Vert/2, \nonumber \\
    & \Vert \Tr_S(\widehat{\Sigma}) - \Tr_S(\Sigma) \Vert \le \frac{1}{2}\Vert \Tr_{S}(\Sigma) \Vert \text{ for all non-empty } S \subseteq [3] \label{eq: partial traces concentration}
\end{align}
with probability at least $1 - \delta/6$, then one can define estimators $\widehat{J}, \widehat{K}$ of $J, K$ as
\begin{align}
    \widehat{J} & = \max \left \{ J' \mid \sigma_{J'}(\ttm_1(\cR(\widehat{\Sigma})) \ge C' \omega \Vert \widehat{\Sigma} \Vert \sqrt{\frac{\ttr_1^2(\widehat{\Sigma}) + \ttr_2^2(\widehat{\Sigma}) \ttr_3^2(\widehat{\Sigma}) + \log (6/\delta)}{n}} \right \}, \label{eq: definition of J estimator} \\
    \widehat{K} & = \max \left \{ K' \mid \sigma_{K'}(\ttm_3(\cR(\widehat \Sigma)) \ge C' \omega \Vert \widehat{\Sigma} \Vert \sqrt{\frac{\widehat{J} \ttr_1^2(\widehat{\Sigma}) + \widehat{J} \ttr_2^2(\widehat{\Sigma}) + \ttr_3^2(\widehat \Sigma) + \log(48/\delta)}{n}} \right \}, \nonumber
\end{align}
where $C'$ is some other absolute constant and $\omega$ is assumed to be known. For example, one can compute $\omega$ explicitly when $\bX_i$ are linear transform of Gaussian random variables. For such $\widehat{J}$, we will have
\begin{align*}
    \sigma_{\widehat{J}}(\ttm_1(\cR(\Sigma))) > 768 \omega \Vert \Sigma \Vert \sqrt{\frac{\ttr_1^2(\Sigma) + \ttr_2^2(\Sigma) \ttr_3^2(\Sigma) + \log(6/\delta)}{n}} \ge   \Vert \ttm_1(\widehat{\cE}) \Vert, 
\end{align*}
with probability $1 - \delta/6$ (see Lemma~\ref{lemma: m1 norm upper bound} in Appendix), implying $\widehat{J} \le J$. If $C$ is significantly larger than $C'$, then the singular value  $\sigma_J(\ttm_1(\cR(\widehat{\Sigma}))) \ge \sigma_J(\ttm_1(\cR(\Sigma))) - \Vert \ttm_1(\widehat{\cE}) \Vert$ satisfies the inequality of the definition~\eqref{eq: definition of J estimator} with probability at least $1 - \delta/6$, so $J \le \widehat{J}$, and we conclude $J = \widehat{J}$ with probability at least $1 - \delta/2$. Analogously, one can show that $K = \widehat{K}$ for suitable choice of $C, C'$ with probability at least $1 - \delta/2$, yielding $J = \widehat{J}$ and $K = \widehat{K}$ with probability at least $1 - \delta$. 

    Then, while applying Algorithm~\ref{algo: order 3 TT-SVD} with $J \le \widehat{J}, K \le \widehat{K}$ could lead to better bias-variance tradeoff, using $J > \widehat{J}$ will result in much worse convergence rate in our model.

However, this holds assuming that~\eqref{eq: partial traces concentration} is fulfilled, so concentration bounds should be established for the norms of partial traces, which we left for future research. 

\section{Experiments}
\label{section: experiments}

In the present section, we illustrate that additional iterations $T$ of HardTTh indeed improve the estimation of the covariance matrix $\Sigma$ provided singular values of matricizations satisfy conditions of Theorem~\ref{theorem: Sigma estimator performance} up to some constant. We also compare HardTTh with several other algorithms.

To illustrate our theory, we construct a sampling model with the covariance matrix $\Sigma$ satisfying~\eqref{eq:kronecker_tt_model} as follows. Set $J = 7, K = 9$ and $p = q = r = 10$. Let $\cE^{ijk}$, $i \in [n], j \in [J], k \in [K]$ be $n \cdot JK$ tensors of shape $(p, q, r)$ consisting of i.i.d. standard Gaussian entries. Let $A_j \in \R^{p \times p}, B_{jk} \in \R^{q \times q}, C_k \in \R^{r \times r}$ be random symmetric matrices with diagonal and upper diagonal entries being i.i.d. Gaussian as well. Then,  random vectors $\bX_1, \ldots, \bX_n$ are defined as vectorized tensors
\begin{align*}
    \sum_{j = 1}^J \sum_{k = 1}^K \cE^{ijk} \times_3 C_k \times_2 B_{jk} \times_1 A_j \in \R^{p \times q \times r},
\end{align*}
conditioned on $A_j, B_{jk}, C_k$. The covariance matrix $\Sigma$ of $\bX_i$ satisfies (see \cite{puchkin24a})
\[
    \Sigma = \sum_{j = 1}^J \sum_{k = 1}^K A_j^2 \otimes B_{jk}^2 \otimes C_k^2.
\]

We propose several algorithms for comparative analysis with HardTTh. Specifically, we consider a version of Algorithm~\ref{algo: order 3 TT-SVD} with $T = 0$ additional steps, to which we refer as TT-HOSVD. This algorithm computes an approximate Tucker-2 decomposition of a noisy tensor $\cR(\widehat{\Sigma}) \approx \widehat{\cW} \times_3 \widehat{V}_0 \times_1 \widehat{U}_0$, and output the estimatior $\widehat{\cW} \times_3 \widehat{V}_0 \times_1 \widehat{U}_0$ of $\cR(\Sigma)$. We use this comparison to justify whether additional iterations are indeed necessary.

Furthermore, we modify the algorithm proposed in \cite{tsiligkaridis13} for use in our context. Instead of a single parameter $\lambda$ to control soft-thresholding, two distinct parameters are passed for each of the first and third matricizations of $\cR(\widehat{\Sigma})$. Using the first one, soft-thresholding upon first matricization is applied, then tensor is reshaped and soft-thresholding with another parameter upon third matricization is used. Then, we reshape the obtained tensor $\widehat{\cX}$ back into a matrix $\cR^{-1}(\widehat{\cX})$ of size $pqr \times pqr$. The pseudocode is given in Algorithm~\ref{algo:tsilikagridis_thresholding} in Appendix~\ref{section: tensor PRLS pseudocode}.

Finally, we compare HardTTh with the approximate Tucker decomposition with the Tucker ranks $(J, JK, K)$ using HOOI (Higher Order Orthogonal Iterations) algorithm of~\citet{zhang2018tensor}. If no additional iterations in this algorithm were applied, we refer to it as ``Tucker'' in our tables. Otherwise, we refer to it as ``Tucker+HOOI''.

We also include the sample covariance estimator into our comparative analysis.

We conduct several experiments varying the number of samples $n$. For $n = 500$, the result is given in Table~\ref{tab:covariance_algorithm_comparison-500 samples}. For $n = 2000$, the result is given in Table~\ref{tab:covariance_algorithm_comparison-2000 samples}. Other values of $n$ are studied in Appendix~\ref{section: additional experiments}. For each estimator $\widehat{S}$ of $\Sigma$, we compute the relative error $\Vert \widehat{S} - \Sigma \Vert_{\F}/ \Vert \Sigma \Vert_{\F}$ in the Frobenius norm. For each $n$, we tune parameters $\lambda_1, \lambda_2$ of the PRLS algorithm over a log-scale grid. We fix the number of iterations $T$ of HardTTh to $10$. Additional experiments with up to $10^9$ parameters confirm our method scalability and can be found in~\ref{subsection:extra_experiments}. We also refer the reader to this section for the rank ablation study. 

\begin{table}[ht]
\centering
\caption{Performance comparison of tensor decomposition algorithms for $n = 500$. Relative errors were averaged over 32 repeats of the experiment, empirical standard deviation is given after $\pm$ sign. The best results are boldfaced.}
\label{tab:covariance_algorithm_comparison-500 samples}
\begin{tabular}{lccc}
\toprule
\multirow{2}{*}{Metric} & \multicolumn{3}{c}{Algorithm} \\
\cmidrule(lr){2-4}
 & Sample Mean & TT-HOSVD & HardTTh \\
\midrule
Relative Error & $1.22 \pm 0.02$ & $0.269 \pm 0.008$ & $\mathbf{0.238 \pm 0.013}$ \\
Time (seconds) & $0.007\pm 0.003$ & $1.9\pm0.8$ & $2.7\pm0.8$ \\
\bottomrule
\end{tabular}

\begin{tabular}{lccc}
\toprule
\multirow{2}{*}{Metric} & \multicolumn{3}{c}{Algorithm} \\
\cmidrule(lr){2-4}
 & Tucker & Tucker+HOOI & PRLS \\
\midrule
Relative Error & $0.252 \pm 0.007$ & $0.240 \pm 0.013$ & $\mathbf{0.238\pm 0.017}$ \\
Time (seconds) & $41.3\pm1.7$ & $81.6\pm3.5$ & $0.7\pm0.3$ \\
\bottomrule
\end{tabular}
\end{table}

\begin{table}[ht]
\centering
\caption{Performance comparison of tensor decomposition algorithms for $n = 2000$. Relative errors were averaged over 16 repeats of the experiment, empirical standard deviation is given after $\pm$ sign. The best results are boldfaced.}
\label{tab:covariance_algorithm_comparison-2000 samples}
\begin{tabular}{lccc}
\toprule
\multirow{2}{*}{Metric} & \multicolumn{3}{c}{Algorithm} \\
\cmidrule(lr){2-4}
 & Sample Mean & TT-HOSVD & HardTTh \\
\midrule
Relative Error & $0.611 \pm 0.009$ & $0.154 \pm 0.006$ & $\mathbf{0.082 \pm 0.005}$ \\
Time (seconds) & $0.010 \pm 0.007$ & $1.7 \pm 0.6$ & $4.1\pm 1.1$ \\
\bottomrule
\end{tabular}

\begin{tabular}{lccc}
\toprule
\multirow{2}{*}{Metric} & \multicolumn{3}{c}{Algorithm} \\
\cmidrule(lr){2-4}
 & Tucker & Tucker+HOOI & PRLS \\
\midrule
Relative Error &$0.150 \pm 0.005$ & $\mathbf{0.082 \pm 0.005}$ & $0.216 \pm 0.012$ \\
Time (seconds) & $39.9 \pm 5.2$ & $74.2 \pm 8.1$ & $0.6 \pm 0.3$ \\
\bottomrule
\end{tabular}
\end{table}

Note that while the sample size increases by $4$, the relative error of HardTTh decreases by $3$, contradicting the $1/\sqrt{n}$ dependence between estimation error and the sample size. The reason is that for $n = 500$ neither TT-HOSVD nor HardTTh is able to reconstruct bases of $\Img \ttm_1(\cR(\Sigma))$ and $\Img \ttm_3(\cR(\Sigma))$, so the leading  error is determined by the lost components of these bases. Hence, one indeed needs some condition on the least singular values of matricizations of $\cR(\Sigma)$. When $n = 2000$, HardTTh is able to approximate these bases, yielding a much better performance, while TT-HOSVD cannot approximate them. It is instructive to look at $\sin \Theta$-distance between $\Img \widehat{U}_0, \Img \widehat{U}_T$ and $\Img U^*$. If $n = 500$, then both $\Img \widehat{U}_0, \Img \widehat{U}_T$ have $\sin \Theta$-distance to $\Img U^*$ around 1. But for $n = 2000$, while $\sin \Theta(\Img \widehat{U}_0, \Img U^*)$ is still around $1$, we have $\sin \Theta(\Img \widehat{U}_T, \Img U^*) = 0.33\pm 0.08$. Therefore, additional iterations of HardTTh indeed help. 

The fact that noise in singular values is larger than the estimation error is illustrated by the fact that PRLS performs worse than TT-HOSVD. Indeed, to remove noise in singular values, PRLS applies soft-thresholding with $\lambda_1, \lambda_2$ being around the noise level in singular values of matricizations. Then, soft-thresholded SVD has each singular value decreased by either $\lambda_1/2$ or $\lambda_2/2$. This yields the estimation error around the maximum of $\lambda_1$ and $\lambda_2$, which dramatically affects the algorithm performance. This highlights the difference between low-rank tensor estimation problem and low-rank matrix estimation problem, since for the latter there is no significant difference between soft-thresholding and hard-thresholding estimation.



\bibliography{references}

\newpage
\appendix

\section{Additional notations and basic tools}

For proofs, we need some extra notation. First, we adapt the Einstein notation for tensors, omitting the summation symbol and assuming that the summation holds across repeated indices, e.g. for the matrix product
\begin{align*}
    (A B)_{ab} = \sum_{c} A_{ac} B_{cb},
\end{align*}
we will write as
\begin{align*}
    (AB)_{ab} = A_{ac} B_{cb}.
\end{align*}

Second, we will widely use the following identities for a tensor $\cT \in \R^{d_1 \times d_2 \times d_3}$ and a matrix $X$ of suitable shape
\begin{equation}
\begin{aligned}
    \ttm_1(\cT \times_3 X) & = \ttm_1(\cT)(I_{d_2} \otimes X^\top), \\
    \ttm_1(\cT \times_1 X) & = X \cdot \ttm_1(\cT), \\
    \ttm_3( \cT X \times_1 X) & = \ttm_3(\cT) (X^\top \otimes I_{d_2}), \\
    \ttm_3(\cT \times_3 X) & = X \cdot \ttm_3(\cT).
\end{aligned} \label{eq: matricization-tensor idenities}
\end{equation}
While the second and the fourth identities are straightforward, the first and the last one should be verified. Let us prove the first identity for $X \in \R^{d' \times d_3}$. Choosing indices $a \in [d_1], b \in [d_2], c \in [d']$, we obtain
\begin{align*}
    \left ( \ttm_1(\cT \times_3 X) \right )_{a, (b - 1) \cdot d_3 + c} & = (\cT \times_3 X)_{abc} = X_{c c'} \cT_{a bc'} \\
    & = \ttm_1(\cT)_{a, (b' - 1) d_3 + c'} (I_{d_2} \otimes X^\top)_{(b' - 1) d_3 + c', (b - 1) d_3 + c}.
\end{align*}
The third idenitty of~\eqref{eq: matricization-tensor idenities} can be checked analogously.

For a matrix $U \in \bbO_{d, r}$, we denote the projector $U U^\top$ on $\Img U$ by $\Pi_{U}$.

\section{Proof of Proposition~\ref{proposition: bounds on effective dimensions}}
\label{section: proof of effective dimensions bounds}

\begin{proof}
The proposition follows from the following bound on the partial trace. Let $\Psi_g : L_1 \to L_1, \Phi_g: L_2 \to L_2, g = 1, \ldots, G,$ be positive semidefinite operators. Define
\begin{align*}
    H = \sum_{g = 1}^G \Psi_g \otimes \Phi_g.
\end{align*}
Then, we have
\begin{align*}
    \Vert \Tr_{L_1}(H) \Vert & = \Vert \sum_{g = 1}^G \Tr(\Psi_g) \Phi_g \Vert \le \sum_{g = 1}^G \frac{\Tr(\Psi_g)}{\Vert \Psi_g \Vert}  \Vert \Psi_g \Vert \Vert \Phi_g \Vert \le \max_g \ttr(\Psi_g) \sum_{g = 1}^G \Vert \Psi_g \Vert \Vert \Phi_g \Vert\\
    & \le G \cdot \max_g \ttr(\Psi_g) \cdot \max_g \Vert \Psi_g \Vert \Vert \Phi_g \Vert \le G \cdot \max_g \ttr(\Psi_g) \cdot \Vert H \Vert.
\end{align*}
The result follows by applying the above to each partial trace $\Tr_S(\Sigma), S \subseteq [3],$ with a proper choice of $L_1 ,\Psi_g$ and $\Phi_g$.
\end{proof}

\section{Proof sketch for Theorem~\ref{theorem: Sigma estimator performance}}
\label{section: proof sketch}

In this section, we provide the sketch of the proof of Theorem~\ref{theorem: Sigma estimator performance}. The proof develops the ideas of~\citet{zhang2018tensor} and~\citet{puchkin24a}. First, we consider the problem of estimating a tensor $\cT^* = \cW^* \times_3 V^*\times_1 U^*$ from a noisy observations $\cY = \cT^* + \cE$, without any assumptions on the error term $\cE$. Let $\widehat{\cT}$ be the estimator obtained by Algorithm~\ref{algo: order 3 TT-SVD} on the input $\cY$. The noise $\cE$ influence the estimation of $\widehat{\cT}$ in several ways. First, one need to impose some assumptions depending on the norms of $\ttm_1(\cE)$ and $\ttm_3(\cE \times_1 \widehat{U}_0)$ on the singular values of matricizations $\ttm_1(\cT^*), \ttm_3(\cT^*)$  to be able to recover  left singular subspaces of these matricizations up to a $\sin \Theta$-error at most $1/4$. Second, we show by induction on $t = 1, \ldots, T$ that $\Img \widehat{U}_t, \Img \widehat{V}_t$ improves the estimation of singular subspaces and establish the dependence of the estimation error on $\cE$ at step $T$. Finally, we decompose the error $\Vert \widehat{\cT} - \cT^* \Vert_{\F}$ into terms depending on the singular subspaces estimation and the error of estimating $\cW^*$. Combining all types of errors, we obtain the following theorem. Its proof if postponed to Section~\ref{section:proof of sensitivity analysis theorem}.

\begin{Th}
\label{theorem: sensetivity analysis}
    Given model~\eqref{eq: tensor model}, suppose that singular values $\sigma_J(\ttm_1(\cT^*)), \sigma_K(\ttm_3(\cT^*))$ satisfy
    \begin{align}
    \label{eq: sensititivty analysis -- sigma inequalities}
        \sigma_J(\ttm_1(\cT^*)) \ge 24 \Vert \ttm_1(\cE) \Vert \quad \text{and} \quad \sigma_K(\ttm_3(\cT^*)) \ge 24 \sup_{\substack{U \in \R^{d_1 \times J} \\ \Vert U \Vert \le 1}}\Vert \ttm_3(\cE) (U \otimes I_{d_2})\Vert.
    \end{align}
    Put 
    \begin{align*}
        \alpha_U & = \Vert \ttm_1(\cE \times_3 (V^*)^\top) \Vert, \qquad & \beta_U = \sup_{\substack{V \in \R^{d_2 \times K} \\ \Vert V \Vert \le 1}} \Vert \ttm_1(\cE \times_3 V^\top) \Vert, \\
        \alpha_V & = \Vert \ttm_3(\cE \times_1 (U^*)^\top) \Vert, \qquad & \beta_V = \sup_{\substack{U \in \R^{d_1 \times J} \\ \Vert U \Vert \le 1}} \Vert \ttm_3(\cE \times_1 U^\top) \Vert.
    \end{align*}
    Then, we have
    \begin{align*}
     \Vert \widehat{\cT} - \cT^* \Vert_{\F} & \le \sup_{U \in \bbO_{d_1, J}, V \in \bbO_{d_2, K}} \Vert \cE \times_3 V^\top \times_1 U^\top \Vert_{\F} + 4 \sqrt{K} \alpha_V+ 4 \sqrt{J} \alpha_U + \diamondsuit_2 + r_T,
\end{align*}
where 
\begin{align*}
\diamondsuit_2 & = 48 \cdot \left ( \frac{ \sqrt{K}\beta_V \alpha_U}{\sigma_J(\ttm_1(\cT^*))} + \frac{\sqrt{J}\beta_U \alpha_V}{\sigma_K(\ttm_3(\cT^*))}\right ), \\
r_T & = 3 (\sqrt{J} + \sqrt{K}) \cdot \left ( \frac{64\beta_V \beta_U}{\sigma_J(\ttm_1(\cT^*)) \sigma_K(\ttm_3(\cT^*))}\right )^T \Vert \ttm_1(\cE) \Vert.
\end{align*}
\end{Th}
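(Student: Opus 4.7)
The plan is to establish Theorem~\ref{theorem: sensetivity analysis} by (i)~a Wedin-based control of the initial singular subspace estimates, (ii)~an alternating recurrence that contracts the $\sin\Theta$ errors at each sweep, and (iii)~a decomposition of the final Frobenius error into projected noise and two low-rank bias pieces, converted to Frobenius via the low-rank structure of $\cT^*$. The workhorse is a noise-splitting inequality: for any $A\in\R^{d_3\times K}$, the orthogonal decomposition $A = V^*(V^{*\top}A) + \Pi_{V^{*\perp}}A$ together with the identity $\ttm_1(A^\top\times_3\,\cdot)=\ttm_1(\cdot)(I_{d_2}\otimes A)$ yields
\[
    \Vert \ttm_1(A^\top\times_3\cE)\Vert \;\le\; \Vert V^{*\top}A\Vert\,\alpha_U \;+\; \Vert \Pi_{V^{*\perp}}A\Vert\,\beta_U,
\]
which specializes for orthonormal $A = \widehat V_{t-1}$ to $\alpha_U + \Vert\sin\Theta(\widehat V_{t-1},V^*)\Vert\,\beta_U$; a symmetric mode-3 version controls $\Vert \ttm_3(B^\top\times_1\cE)\Vert$ by $\alpha_V + \Vert\sin\Theta(B,U^*)\Vert\,\beta_V$ for $B\in\R^{d_1\times J}$ with orthonormal columns.

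Wedin's $\sin\Theta$ theorem applied to $\ttm_1(\cY)=\ttm_1(\cT^*)+\ttm_1(\cE)$ under $\sigma_J(\ttm_1(\cT^*))\ge 24\Vert\ttm_1(\cE)\Vert$ gives the base case $\Vert\sin\Theta(\widehat U_0,U^*)\Vert\le 1/23$; a second Wedin on the mode-3 unfolding of $\widehat U_0^\top\times_1\cY$, combined with the workhorse estimate and the assumption on $\sigma_K(\ttm_3(\cT^*))$, controls $\widehat V_0$ similarly. For $t\ge 1$, the factorization $\ttm_1(\widehat V_{t-1}^\top\times_3\cT^*)=\ttm_1(\cT^*)(I_{d_2}\otimes\widehat V_{t-1})$ has column space $\Img U^*$ and smallest nonzero singular value at least $\sigma_{\min}(V^{*\top}\widehat V_{t-1})\,\sigma_J(\ttm_1(\cT^*))$, so Wedin gives $\Vert\sin\Theta(\widehat U_t,U^*)\Vert \lesssim (\alpha_U+\beta_U\Vert\sin\Theta(\widehat V_{t-1},V^*)\Vert)/\sigma_J(\ttm_1(\cT^*))$ and symmetrically for $\widehat V_t$. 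Composing produces a contractive linear recurrence with factor $\beta_U\beta_V/(\sigma_J\sigma_K)<1$, whose $T$-fold unrolling isolates leading $\alpha/\sigma$ terms, second-order $\beta\alpha/(\sigma\sigma)$ terms, and the geometric remainder $(\beta_U\beta_V/(\sigma_J\sigma_K))^T$ visible in $r_T$.

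Using $\cT^* = \Pi_{U^*}\times_1\Pi_{V^*}\times_3\cT^*$ together with the algebraic identity $\Pi_{\widehat U_T}-\Pi_{U^*} = \Pi_{\widehat U_T}\Pi_{U^{*\perp}} - \Pi_{\widehat U_T^\perp}\Pi_{U^*}$ (and analogously in mode 3) telescopes the estimation error to
\[
    \widehat{\cT}-\cT^* \;=\; -\Pi_{\widehat U_T^\perp}\times_1\Pi_{\widehat V_T}\times_3\cT^* \;-\; \Pi_{\widehat V_T^\perp}\times_3\cT^* \;+\; \Pi_{\widehat U_T}\times_1\Pi_{\widehat V_T}\times_3\cE.
\]
The noise piece satisfies $\Vert\widehat U_T^\top\times_1\widehat V_T^\top\times_3\cE\Vert_F\le \sup_{U,V}\Vert U^\top\times_1V^\top\times_3\cE\Vert_F$, the first term of the bound. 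Each bias piece inherits mode-1 rank $\le J$ or mode-3 rank $\le K$ from $\cT^*$, so $\Vert\cdot\Vert_F\le \sqrt{J}\Vert\cdot\Vert$ (resp.\ $\sqrt{K}$) reduces the task to an operator-norm bound. The operator norm is controlled by algorithmic optimality of SVD and Weyl: since the signal $\ttm_1(\widehat V_{T-1}^\top\times_3\cT^*)$ has rank $\le J$, one has $\Vert\Pi_{\widehat U_T^\perp}\ttm_1(\widehat V_{T-1}^\top\times_3\cY)\Vert = \sigma_{J+1}(\ttm_1(\widehat V_{T-1}^\top\times_3\cY)) \le \Vert\ttm_1(\widehat V_{T-1}^\top\times_3\cE)\Vert$, to which the workhorse estimate applies. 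Substituting the $\sin\Theta$ bounds from the recurrence then produces $\sqrt{J}\alpha_U,\sqrt{K}\alpha_V$ as leading contributions, the $\diamondsuit_2$ cross terms, and the contraction remainder $r_T$.

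The hardest point is the last step: the bias pieces are expressed through $\widehat V_T$, but algorithmic optimality is available only for $\widehat U_T$, which is built from $\widehat V_{T-1}$. This mismatch is resolved by the splitting $\widehat V_T = \widehat V_{T-1}(\widehat V_{T-1}^\top\widehat V_T) + \Pi_{\widehat V_{T-1}^\perp}\widehat V_T$ and absorbing the second summand via the $\sin\Theta$ bounds already in hand, while carefully ensuring that the leading $\sqrt{J}\alpha_U$ and $\sqrt{K}\alpha_V$ terms emerge with clean absolute constants and without any spurious $\sigma_1/\sigma_J$ condition-number factor. Careful bookkeeping of the resulting geometric sums accounts for the explicit constants $4$, $48$, and $200$ in the statement.
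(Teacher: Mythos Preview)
Your proposal follows essentially the same architecture as the paper's proof: Wedin-based initialization, an alternating contraction on the subspace errors, and a three-piece decomposition of $\widehat\cT-\cT^*$ into projected noise plus two low-rank bias terms; the noise-splitting inequality you highlight is exactly the mechanism the paper uses (their eqs.~\eqref{eq: noise projector 1 second-order expansion}--\eqref{eq: noise projector 3 second-order expansion}, with a factor $2\rho(\cdot,\cdot)$ in place of your $\|\sin\Theta\|$).

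There is one substantive difference in the final step, and your description of the ``hardest point'' does not match what actually resolves it. The paper uses an \emph{orthogonal} Pythagorean decomposition (their eq.~\eqref{eq: frobenius norm initial bound}), so the mode-3 bias term already carries a factor $\widehat U_T^\top$ and algorithmic optimality of $\widehat V_T$ applies directly; only the mode-1 bias term lacks the needed $\widehat V_{T-1}$. Their fix is not to split $\widehat V_T$, but to insert $(\widehat V_{T-1}^\top V^*)$ and pay a factor $\sigma_{\min}^{-1}(\widehat V_{T-1}^\top V^*)\le\sqrt{2}$, which is harmless once $\rho(\widehat V_{T-1},V^*)\le 1/4$. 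Your splitting $\widehat V_T=\widehat V_{T-1}(\widehat V_{T-1}^\top\widehat V_T)+\Pi_{\widehat V_{T-1}^\perp}\widehat V_T$ does not directly produce this; applied to either bias piece it tends to generate a term of size $\|\sin\Theta\|\cdot\|\cT^*\|$, which is precisely the condition-number contamination you say you want to avoid. Replacing your splitting with the paper's $\sigma_{\min}^{-1}$ insertion makes the argument go through with the stated constants. (A minor slip: the constant $200$ belongs to the downstream Theorem~\ref{theorem: Sigma estimator performance}; in Theorem~\ref{theorem: sensetivity analysis} the relevant constants are $64$ and the prefactor $3$ in $r_T$.)
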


Then, we decompose the error $\cE$ into the bias part $\overline{\cE}$ and the varaince part $\widehat{\cE}$. Using the triangle inequality, we bound each error term appearing in Theorem~\ref{theorem: sensetivity analysis} into the bias and variance parts, and bound the variance parts with high probability using the variational PAC--Bayes approach (see~\citep{catoni17,zhivotovskiy24,abdalla22,puchkin24a} for other applications of this technique).

\section{Proof of Theorem~\ref{theorem: Sigma estimator performance}}
\label{section: proof of main theorem}

\begin{proof}[Proof of Theorem~\ref{theorem: Sigma estimator performance}] For clarity, we divide the proof into several steps. For brevity, we denote $\cR(\ttm_i(\cdot))$, $i = 1, 3$, by $\cR_i(\cdot)$.

\noindent \textbf{Step 1. Sensititivty analysis of Algorithm~\ref{algo: order 3 TT-SVD}.} First, we establish deterministic bounds on the reconstruction of the tensor $\cT^*$ from a noisy observation $\cY$ by Algorithm~\ref{algo: order 3 TT-SVD}, denoting
\begin{align}
    \cY = \cT^* + \cE, \label{eq:  tensor model}
\end{align}
where $\cT^* = \cW^* \times_3 V^* \times_1 U^*$ is the best $(J,K)$-TT-rank approximation of $\cR(\Sigma)$, $U^* \in \bbO_{d_1, J}$, $V^* \in \bbO_{d_3, K}$, $\cW^* \in \R^{J \times d_2 \times K}$, and $\cY = \cR(\widehat{\Sigma})$. Let $\widehat{\cT}$ be the output of Algorithm~\ref{algo: order 3 TT-SVD} with input $\cY$. Then, Theorem~\ref{theorem: sensetivity analysis} is applicable. But we need first to check its conditions.

\noindent \textbf{Step 2. Checking conditions of Theorem~\ref{theorem: sensetivity analysis}.} We deduce Theorem~\ref{theorem: Sigma estimator performance} from Theorem~\ref{theorem: sensetivity analysis}. Let us start with conditions of Theorem~\ref{theorem: sensetivity analysis}, and bound right-hand sides of inequalities~\eqref{eq: sensititivty analysis -- sigma inequalities} from above. Consider the lower bound on $\sigma_J(\ttm_1(\cT^*))$. By the triangle inequality, we have
\begin{align*}
    \Vert \ttm_1(\cE) \Vert \le \Vert \ttm_1(\overline{\cE}) \Vert + \Vert \ttm_1(\widehat{\cE}) \Vert.
\end{align*}
The second term of the above can be upper bounded using the following lemma.

\begin{Lem}
\label{lemma: m1 norm upper bound}
Fix $\delta \in (0, 1)$. Suppose that $n \ge \ttr_1^2(\Sigma) + \ttr_2^2(\Sigma)\ttr_3^2(\Sigma) + \log(4/\delta)$. Then, under Assumption~\ref{as:orlicz}, we have
\begin{align*}
    \Vert \ttm_1(\widehat{\cE}) \Vert \le 32 \omega \Vert \Sigma \Vert \sqrt{\frac{\ttr_1^2(\Sigma) +  \ttr_2^2(\Sigma) \ttr_3^2(\Sigma) + \log(1/\delta)}{n}}
\end{align*}
with probability at least $1 - \delta$.
\end{Lem}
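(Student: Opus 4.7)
The plan is to reduce the operator norm of $\ttm_1(\widehat\cE)$ to a uniform control of a centered bilinear form in the sample covariance matrix, and then to apply the variational PAC--Bayes machinery developed in~\citep{catoni17, zhivotovskiy24, puchkin24a}. I first use the variational representation
\[
    \Vert \ttm_1(\widehat\cE)\Vert = \sup_{u \in S^{d_1-1},\, w \in S^{d_2 d_3-1}} u^\top \ttm_1(\widehat\cE) w,
\]
and observe that the composition $\ttm_1 \circ \cR$ is an isometry sending a matrix $M \in \R^{pqr \times pqr}$ to a matrix of size $p^2 \times q^2 r^2$. A direct index computation shows that the adjoint map carries a rank-one $uw^\top$ (with $\Vert u\Vert = \Vert w\Vert = 1$) to a matrix of Kronecker form $U \otimes N$, where $U \in \R^{p \times p}$ is the reshape of $u$, $N \in \R^{qr \times qr}$ is the reshape of $w$, and $\Vert U\Vert_{\F}\Vert N\Vert_{\F} = 1$. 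Therefore
\[
    u^\top \ttm_1(\widehat\cE) w = \bigl\langle U \otimes N,\ \widehat\Sigma - \Sigma \bigr\rangle = \frac{1}{n}\sum_{i=1}^{n} \Bigl[ (\Sigma^{-1/2}\bX_i)^\top \widetilde M (\Sigma^{-1/2}\bX_i) - \Tr(\widetilde M) \Bigr],
\]
where $\widetilde M = \Sigma^{1/2} (U\otimes N)_{\mathrm{sym}} \Sigma^{1/2}$ satisfies $\Vert \widetilde M\Vert_{\F} \le \Vert\Sigma\Vert$.

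For a fixed pair $(u, w)$, Assumption~\ref{as:orlicz} combined with a standard Chernoff bound yields a sub-Gaussian tail with proxy variance of order $\omega^2 \Vert \Sigma\Vert^2/n$. To obtain a uniform bound over the two unit spheres without paying the ambient dimensions $p^2$ and $q^2 r^2$, I would pick Gaussian priors $\pi_U$ on $p\times p$ matrices and $\pi_N$ on $qr\times qr$ matrices whose covariances are proportional to the partial traces of $\Sigma$; the natural choices are $\pi_U = \cN(0,\Tr_{2,3}(\Sigma)/\Vert\Tr_{2,3}(\Sigma)\Vert)$ and $\pi_N = \cN(0,\Tr_{1}(\Sigma)/\Vert\Tr_{1}(\Sigma)\Vert)$, so that the sub-Gaussian increment remains controlled by $\omega\Vert\Sigma\Vert/\sqrt n$ under the prior. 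For a target direction $(U^\star, N^\star)$ on the Frobenius unit sphere, the posterior is a Gaussian concentrated around $(U^\star, N^\star)$; its KL divergence against the prior evaluates, after multiplication by $\Vert\Sigma\Vert$, to a quantity bounded by $\ttr_1^2(\Sigma)$ on the $U$--side (because $\Vert\Tr_{2,3}(\Sigma)\Vert/\Vert\Sigma\Vert$ cancels against $\Tr(\Tr_{2,3}(\Sigma))/\Vert\Tr_{2,3}(\Sigma)\Vert$ up to the defining ratio of $\ttr_1$) and by $\ttr_2^2(\Sigma)\ttr_3^2(\Sigma)$ on the $N$--side (via the general inequality $\Vert\Tr_S(\Sigma)\Vert/\Vert\Sigma\Vert \le \prod_{s\in S}\ttr_s(\Sigma)$ quoted after the definition of $\ttr_i$). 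Optimising the PAC--Bayes inverse temperature $\lambda$ and tracking the numerical constants in Assumption~\ref{as:orlicz} produces the advertised factor $32\omega$, while the condition $n \ge \ttr_1^2(\Sigma) + \ttr_2^2(\Sigma)\ttr_3^2(\Sigma) + \log(4/\delta)$ arises from keeping $\lambda$ in the admissible Orlicz range.

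The main obstacle is the third step: the prior on $w$ lives in $\R^{q^2 r^2}$, and it is essential that the KL penalty collapse to the \emph{multiplicative} quantity $\ttr_2^2(\Sigma)\ttr_3^2(\Sigma)$ rather than the additive $\ttr_2^2(\Sigma) + \ttr_3^2(\Sigma)$, since $N$ has no further Kronecker splitting. This forces the prior covariance on $N$ to be the genuine $qr\times qr$ partial trace $\Tr_1(\Sigma)$ (and not, e.g., a Kronecker product of two smaller partial traces), which in turn requires verifying that the Frobenius norm $\Vert\widetilde M\Vert_{\F}$ remains bounded by $\Vert\Sigma\Vert$ in expectation under this prior. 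Once this calibration is in place, a union bound over a failure event for $(u,w)$ and the matching lower-order remainder checks finish the proof; the rest is bookkeeping.
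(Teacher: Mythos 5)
Your reduction to a sup of a centered bilinear form and the isometry observation that $\cR_1^{-1}(\bx\by^\top) = U \otimes N$ with $\Vert U\Vert_\F \Vert N\Vert_\F = 1$ are correct and match the paper's starting point. The paper also proceeds by PAC--Bayes (Lemma~\ref{lem:pac-bayes}), so the overall strategy is shared. The divergence is in the design of the prior, and that is where your argument breaks down.

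You propose a \emph{non-isotropic} Gaussian prior whose covariance is built from partial traces, e.g.\ $\pi_U = \cN(0,\Tr_{2,3}(\Sigma)/\Vert\Tr_{2,3}(\Sigma)\Vert)$, with the posterior a mean-shift of the prior to the target direction $U^\star$. For a shift posterior the KL cost is $\tfrac12 (U^\star)^\top\Lambda^{-1}U^\star$, which is direction-dependent once $\Lambda \ne \sigma^2 I$; it can be as large as $\Vert\Lambda\Vert/\lambda_{\min}(\Lambda)$ over the Frobenius unit sphere and is not controlled by $\ttr_1^2(\Sigma)$. Worse, your own heuristic cancellation ``$\Vert\Tr_{2,3}(\Sigma)\Vert/\Vert\Sigma\Vert$ against $\Tr(\Tr_{2,3}(\Sigma))/\Vert\Tr_{2,3}(\Sigma)\Vert$'' produces $\Tr(\Sigma)/\Vert\Sigma\Vert = \ttr(\Sigma)$, the \emph{full} effective rank, which can scale like $pqr$ and is far larger than $\ttr_1^2(\Sigma)\le p^2$. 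So the advertised scaling does not emerge, and ``the rest is bookkeeping'' is not correct.

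The paper's mechanism is different in a way that sidesteps this. It uses \emph{isotropic} priors $\cN(0,\sigma_1^2 I_{d_1})\otimes\cN(0,\sigma_2^2 I_{d_2d_3})$ and a posterior that is a truncated shift of the prior (truncated to a Markov-controlled event $\Upsilon$ to keep $\lambda \Vert\Sigma^{1/2}\cR_1^{-1}(\bxi\bfeta^\top)\Sigma^{1/2}\Vert_\F$ in the Orlicz range almost surely). Because the prior is isotropic, the KL cost is $\Vert\bx\Vert^2/(2\sigma_1^2)+\Vert\by\Vert^2/(2\sigma_2^2)+\log 4$, \emph{independent of direction}. The effective dimensions do not come from the KL side; they enter through the log-MGF side via the bounds $\E g^2(G_1,\by)\le\sigma_1^2\Vert\Tr_1(\Sigma)\Vert^2$, $\E g^2(\bx,G_2)\le\sigma_2^2\Vert\Tr_{2,3}(\Sigma)\Vert^2$, $\E g^2(G_1,G_2)\le\sigma_1^2\sigma_2^2\Tr^2(\Sigma)$, which force the choices $\sigma_1^2=\ttr_1^{-2}(\Sigma)$ and $\sigma_2^2=\ttr_2^{-2}(\Sigma)\ttr_3^{-2}(\Sigma)$ to keep $g$ uniformly $O(\Vert\Sigma\Vert)$ under the posterior, and only then the KL becomes $\ttr_1^2(\Sigma)/2+\ttr_2^2(\Sigma)\ttr_3^2(\Sigma)/2 + O(1)$. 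If you want to salvage your version you would need a genuinely different posterior construction (not a mean-shift of a $\Sigma$-aligned prior), and you would also need the truncation step you only allude to; neither is a bookkeeping matter.
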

Define the event
\begin{align}
\label{eq: event_1 definition}
    \event_1 = \left \{ \Vert \ttm_1(\widehat{\cE}) \Vert \le 32 \omega \Vert \Sigma \Vert \sqrt{\frac{\ttr_1^2(\Sigma) + \ttr_2^2(\Sigma) \ttr_3^2(\Sigma) + \log(6/\delta)}{n}} \right \}.
\end{align}
Since $n \ge \ttR_{\delta} \ge  \ttr_1^2(\Sigma) + \ttr_2^2(\Sigma)\ttr_3^2(\Sigma) + \log(24/\delta)$, due to Lemma~\ref{lemma: m1 norm upper bound}, we have $\Pr(\event_1) \ge 1 - \delta/6$. Hence, if 
\begin{align*}
    \sigma_J(\ttm_1(\cT^*)) \ge 24 \Vert \ttm_1(\overline{\cE}) \Vert + 768 \omega \Vert \Sigma \Vert \sqrt{\frac{\ttr_1^2(\Sigma) + \ttr_2^2(\Sigma) \ttr_3^2(\Sigma) + \log(6/\delta)}{n}},
\end{align*}
the first inequality of~\eqref{eq: sensititivty analysis -- sigma inequalities} is fulfilled on the event $\event_1$. Since $\sigma_J(\ttm_1(\cT^*)) \ge \sigma_J(\cR_1(\Sigma)) - \Vert \ttm_1(\overline{\cE}) \Vert$, on $\event_1$, to fulfill the first inequality of~\eqref{eq: sensititivty analysis -- sigma inequalities}, it is enough to ensure that
\begin{align*}
    \sigma_J(\cR_1(\Sigma)) \ge 25 \Vert \ttm_1(\overline{\cE}) \Vert + 768 \omega \Vert \Sigma \Vert \sqrt{\frac{\ttr_1^2(\Sigma) + \ttr_2^2(\Sigma) \ttr_3^2(\Sigma) + \log(6/\delta)}{n}},
\end{align*}
as guaranteed by the conditions of the theorem.

To satisfy the second inequality of~\eqref{eq: sensititivty analysis -- sigma inequalities}, we use the triangle inequality again and obtain
\begin{align*}
    \sup_{\substack{U \in \R^{d_1 \times J} \\ \Vert U \Vert \le 1}}\Vert \ttm_3(\cE) (U \otimes I_{d_2})\Vert \le  \sup_{\substack{U \in \R^{d_1 \times J} \\ \Vert U \Vert \le 1}}\Vert \ttm_3(\overline{\cE}) (U \otimes I_{d_2})\Vert + \sup_{\substack{U \in \R^{d_1 \times J} \\ \Vert U \Vert \le 1}}\Vert \ttm_3(\widehat{\cE}) (U \otimes I_{d_2})\Vert.
\end{align*}
We bound the second term, using the following lemma. Its proof is given in Section~\ref{section: proof of lemma sup U times noise}.
\begin{Lem}
\label{lemma: sup U times noise}
Fix $\delta \in (0, 1)$. Suppose that $n \ge J \ttr_1^2(\Sigma) + J \ttr_2^2(\Sigma) +  \ttr_3^2(\Sigma) + \log (8/\delta)$. Then, with probability at least $1 - \delta$, we have
\begin{align*}
    \sup_{\substack{U \in \R^{d_1 \times J} \\ \Vert U \Vert \le 1}} \Vert \ttm_3(\widehat{\cE}) (U \otimes I_{d_2})\Vert \le 32 \omega \Vert \Sigma \Vert \sqrt{\frac{J \ttr_1^2(\Sigma) + J \ttr_2^2(\Sigma) + \ttr_3^2(\Sigma) + \log(8/\delta)}{n}}.
\end{align*}
    Analogously, if $n \ge \ttr_1^2(\Sigma) + K \ttr_2^2(\Sigma) + K \ttr_3^2(\Sigma) + \log(8/\delta)$, then, with probability at least $1 - \delta$, it holds that
\begin{align*}
    \sup_{V \in\R^{d_3 \times K}, \Vert V \Vert \le 1} \Vert \ttm_1(\cE) (I_{d_2} \otimes V) \Vert \le 32 \omega \Vert \Sigma \Vert \sqrt{\frac{\ttr_1^2(\Sigma) + K \ttr_2^2(\Sigma) + K \ttr_3^2(\Sigma) + \log(8/\delta)}{n}}.
\end{align*}
\end{Lem}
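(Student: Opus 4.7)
The plan is to reduce the quantity to a trilinear form in the centred sum $\widehat{\Sigma} - \Sigma$ and then run the variational PAC--Bayes argument in the spirit of~\citep{zhivotovskiy24, puchkin24a, puchkin2025sharper}. Using the matricization identities~\eqref{eq: matricization-tensor idenities}, I first rewrite
\begin{align*}
    \ttm_3(\widehat{\cE}) (U \otimes I_{d_2}) = \ttm_3(U^\top \times_1 \widehat{\cE}),
\end{align*}
so that the operator norm expands as the trilinear supremum of $a^\top \ttm_3(\widehat{\cE}) \rmvec(B U^\top)$ over $a \in \R^{d_3}$, $B \in \R^{d_2 \times J}$, $U \in \R^{d_1 \times J}$ with $\Vert a \Vert \le 1$, $\Vert B \Vert_{\F} \le 1$, $\Vert U \Vert \le 1$ (using the standard vec identity $(U \otimes I_{d_2}) \rmvec(B) = \rmvec(B U^\top)$). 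Since $\widehat{\cE} = \cR(\widehat{\Sigma} - \Sigma) = \frac{1}{n} \sum_{i = 1}^n \cR(\bX_i \bX_i^\top - \Sigma)$ is linear in $\widehat{\Sigma} - \Sigma$, for each triple $(U, a, B)$ this trilinear expression can be rewritten as
\begin{align*}
    \frac{1}{n} \sum_{i = 1}^n \bigl( \bX_i^\top M(U, a, B) \bX_i - \Tr(\Sigma M(U, a, B)) \bigr)
\end{align*}
for an appropriate symmetric matrix $M(U, a, B) \in \R^{d \times d}$ whose Frobenius norm is bounded by $1$ on the feasible set.

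Next, for each fixed triple, I invoke Assumption~\ref{as:orlicz} applied to the rescaled quadratic form $\Sigma^{1/2} M(U, a, B) \Sigma^{1/2}$, which yields a sub-exponential moment generating function bound for the above centred sum at scale $\omega \Vert \Sigma \Vert / \sqrt{n}$. To promote this pointwise bound to a uniform one, I equip the parameters $a$, $B$, $U$ with Gaussian priors whose covariances are chosen proportional to the appropriate partial traces of $\Sigma$: the $a$-factor is $a \sim \cN(0, \Tr_{1,2}(\Sigma)/\Vert \Sigma \Vert)$, the $B$-factor is $B \sim \cN(0, I_J \otimes \Tr_{1,3}(\Sigma)/\Vert \Sigma \Vert)$, and the $U$-factor is $U \sim \cN(0, I_J \otimes \Tr_{2,3}(\Sigma)/\Vert \Sigma \Vert)$. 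The posterior is then taken to be a Gaussian sharply concentrated around the maximizing triple; the Kullback--Leibler divergence decomposes into three additive contributions evaluating, via the partial-trace bounds used to prove the inequalities recalled after the definition of $\ttr_i(\Sigma)$, to $\ttr_3^2(\Sigma)$, $J \ttr_2^2(\Sigma)$ and $J \ttr_1^2(\Sigma)$ respectively. Combined with the confidence term $\log(8/\delta)$, this reproduces the first inequality of the lemma under the sample-size condition $n \ge J \ttr_1^2(\Sigma) + J \ttr_2^2(\Sigma) + \ttr_3^2(\Sigma) + \log(8/\delta)$.

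The second inequality is established by the entirely symmetric argument with modes $1$ and $3$ swapped. One uses the identity $\ttm_1(\widehat{\cE}) (I_{d_2} \otimes V) = \ttm_1(V^\top \times_3 \widehat{\cE})$ from the first line of~\eqref{eq: matricization-tensor idenities}, then applies the same PAC--Bayes machinery with Gaussian priors on $a \in \R^{d_1}$, $B \in \R^{d_2 \times K}$ and $V \in \R^{d_3 \times K}$, producing the effective dimensions $\ttr_1^2(\Sigma)$, $K \ttr_2^2(\Sigma)$ and $K \ttr_3^2(\Sigma)$.

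The principal technical obstacle is the precise calibration of the Gaussian prior covariances so that simultaneously (i) the posterior-averaged moment generating function is controlled by $\omega^2 \Vert \Sigma \Vert^2 / n$ times a quantity involving only the effective dimensions $\ttr_i(\Sigma)$ rather than the ambient dimensions $d_i$, and (ii) the KL divergence reproduces exactly the partial-trace-based effective dimensions with the correct multiplicative factors of $J$ (respectively $K$). This parallels the machinery underlying Lemma~\ref{lemma: m1 norm upper bound}, which I expect to reuse essentially verbatim; the only genuinely new ingredient is the bookkeeping for the extra matrix parameter $U$ (respectively $V$), which is what produces the factor $J$ (respectively $K$) in front of the $\ttr_1^2(\Sigma)$ and $\ttr_2^2(\Sigma)$ terms.
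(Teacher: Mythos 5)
Your high-level plan -- rewrite the operator norm as a trilinear supremum in $\widehat{\cE}$, express it as a centred empirical process of quadratic forms in $\bX_i$, and then run a shifted-Gaussian PAC--Bayes argument -- is exactly the route the paper follows. The paper factors this into a standalone result (Theorem~\ref{theorem: concentration on the empirical process}), from which Lemma~\ref{lemma: sup U times noise} falls out in a few lines after identifying $\bbS_1 = \{A_1 : \R^J \to \R^{d_1}, \Vert A_1\Vert \le 1\}$, $\bbS_2 = \{A_2 \in \R^{J\times d_2\times 1}: \Vert A_2\Vert_\F \le 1\}$, $\bbS_3 = \{A_3 : \R \to \R^{d_3}, \Vert A_3\Vert\le 1\}$ (so $l_1 = J$, $l_2 = l_1 l_3 = J$, $l_3 = 1$), rather than inlining the calculation as you do. So the architecture matches.

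However, the execution you sketch has gaps that would need to be repaired. First, the Gaussian priors you write down are dimensionally inconsistent: you propose $a \sim \cN(0, \Tr_{1,2}(\Sigma)/\Vert\Sigma\Vert)$ for $a \in \R^{d_3} = \R^{r^2}$, but $\Tr_{1,2}(\Sigma)$ is an $r\times r$ operator, and similarly for the $U$-prior ($U$ lives in $\R^{p^2\times J}$ while $I_J \otimes \Tr_{2,3}(\Sigma)$ is $Jp\times Jp$). The paper instead uses \emph{isotropic} Gaussian priors $\cN(0,\sigma_i^2 I)$ with $\sigma_i^2 \propto \ttr_i^{-2}(\Sigma)$ (or $\ttr_i^{-2}(\Sigma)/(l_1 l_3)$ on the middle mode); the effective dimensions come out of the KL divergence through the scale $\sigma_i$, while the partial-trace operators enter only through bounding the Gaussian-averaged Frobenius norms $\E g(\cdot,\cdot,\cdot)$, never as covariance matrices of the prior. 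Your plan to make ``the posterior-averaged moment generating function controlled by $\omega^2\Vert\Sigma\Vert^2/n$ times a quantity involving the effective dimensions'' also has the bookkeeping in the wrong place: the MGF term must be $O(\lambda^2\omega^2\Vert\Sigma\Vert^2)$ with no dimension factor at all, and it is exactly the KL term that carries $J\ttr_1^2(\Sigma) + J\ttr_2^2(\Sigma) + \ttr_3^2(\Sigma)$.

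Second, and more substantively, a ``Gaussian sharply concentrated around the maximizing triple'' is not enough as a posterior. Because the function is \emph{trilinear} in the three parameters, you need both (a) that $\E_{\rho}(P \times_1 R \times_3 Q) = A_1 \times_1 A_3 \times_3 A_2$ exactly, which the paper guarantees via a Rademacher-symmetrization argument that requires the perturbation law to be centrally symmetric, and (b) an almost-sure bound $\lambda\Vert \Sigma^{1/2}\cR^{-1}(P\times_1 R\times_3 Q)\Sigma^{1/2}\Vert_\F \le 1/\omega$ so that Assumption~\ref{as:orlicz} can be applied pointwise. The paper achieves (b) by conditioning the shifted Gaussian on the event $\Upsilon$ where each of the $2^3-1$ mixed terms $g(a,b,c)$, $(a,b,c)$ ranging over the hypercube of deterministic/Gaussian choices, is at most a constant times its expectation; this truncation, together with a union/Markov argument showing $\Pr(\Upsilon^c) \le 7/8$, is what makes the almost-sure control compatible with the KL staying $O(\sum_i l_i\ttr_i^2(\Sigma))$. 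Your proposal omits both the truncation and the symmetrization, and without them the chain (pointwise Hanson--Wright $\to$ PAC--Bayes) does not close. If you were genuinely to ``reuse Lemma~\ref{lemma: m1 norm upper bound} verbatim'' you would be led back to exactly this construction with one more Gaussian factor, which is what Theorem~\ref{theorem: concentration on the empirical process} encapsulates.
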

Define the event
\begin{align*}
    \event_2 = \left \{ \sup_{\substack{U \in \R^{d_1 \times J} \\ \Vert U \Vert \le 1}}\Vert \ttm_3(\widehat{\cE}) (U \otimes I_{d_2})\Vert \le 32 \omega \Vert \Sigma \Vert \sqrt{\frac{\ttr_3^2(\Sigma) + J \ttr_1^2(\Sigma) + J \ttr_2^2(\Sigma) + \log(48/\delta)}{n}} \right \}.
\end{align*}
It has probability $\Pr(\event_2) \ge 1 - \delta/6$, since  $n \ge \ttR_{\delta}$ satisfies conditions of Lemma~\ref{lemma: sup U times noise} with $\delta/6$ in place of $\delta$. Due to conditions of the theorem, we have
\begin{align*}
    \sigma_K(\cR_3(\Sigma)) \ge 25 \Vert \ttm_3(\overline{\cE}) \Vert + 768 \omega \Vert \Sigma \Vert \sqrt{\frac{\ttr_3^2(\Sigma) + J \ttr_1^2(\Sigma) + J \ttr_2^2(\Sigma) + \log(48/\delta)}{n}},
\end{align*}
so conditions of Theorem~\ref{theorem: sensetivity analysis} is satisfied on $\event_1 \cap \event_2$.  

\noindent \textbf{Step 3. Bounding $\alpha_U, \alpha_V, \beta_U, \beta_V$.} Then, we bound $\alpha_U, \alpha_V, \beta_U, \beta_V$. We start by the former two quantities. By the triangle inequality, we have
\begin{align*}
    \alpha_U & \le \Vert \ttm_1(\overline{\cE} \times_3 (V^*)^\top \Vert + \Vert \ttm_1(\widehat{\cE} \times_3 (V^*)^\top \Vert, \\
    \alpha_V & \le \Vert \ttm_3(\overline{\cE} \times_1 (U^*)^\top \Vert + \Vert \ttm_3(\widehat{\cE} \times_3 (U^*)^\top \Vert.
\end{align*}
To bound the second terms of the right-hand sides of the above, we use the following lemma. Its proof is given in Section~\ref{section: proof of lemma U star times noise bound}.
\begin{Lem}
\label{lemma: U star times noise bound}
Fix $\delta \in (0, 1)$. Suppose that $n \ge \ttr_1^2(\Sigma) + K \ttr^2_2(\Sigma) + \log(8/\delta)$. Then, with probability at least $1 - \delta$, we have
\begin{align*}
    \Vert \ttm_1(\widehat{\cE} \times_3 (V^*)^\top) \Vert \le 32 \omega \Vert \Sigma \Vert \sqrt{\frac{\ttr_1^2(\Sigma) + K \ttr^2_2(\Sigma) + \log(8/\delta)}{n}}.   
\end{align*}
Analogously, if $n \ge \ttr_3^2(\Sigma) + J \ttr^2_2(\Sigma) + \log(8/\delta)$, then, with probability at least $1 - \delta$, we have
\begin{align*}
    \Vert \ttm_3(\widehat{\cE} \times_3 (U^*)^\top) \Vert \le 32 \omega \Vert \Sigma \Vert \sqrt{\frac{\ttr_3^2(\Sigma) + J \ttr^2_2(\Sigma) + \log(8/\delta)}{n}}.
\end{align*}
\end{Lem}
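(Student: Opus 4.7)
The plan is to reduce the spectral norm to a supremum of scalar bilinear forms, rewrite each such form as a linear functional of $\widehat{\Sigma}-\Sigma$ against a matrix of Frobenius norm at most one, and then obtain a uniform-in-$(u,w)$ high-probability bound via the variational PAC--Bayes technique driven by Assumption~\ref{as:orlicz}. The companion bound for $\Vert \ttm_3((U^*)^\top \times_1 \widehat{\cE}) \Vert$ follows by the same argument with the first and third modes interchanged and $V^*$ replaced by $U^*$.

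First, by the matricization identity~\eqref{eq: matricization-tensor idenities}, I would write
\[
\Vert \ttm_1((V^*)^\top \times_3 \widehat{\cE}) \Vert \;=\; \sup_{\Vert u\Vert = \Vert w\Vert = 1} u^\top \ttm_1(\widehat{\cE})(I_{d_2}\otimes V^*) w,
\]
with $u\in \R^{d_1}$ and $w\in \R^{d_2 K}$. Reshaping $w$ into a matrix $W \in \R^{d_2\times K}$ and writing out indices, the bilinear form equals $\langle \mathcal{Z}_{u,W}, \widehat{\cE}\rangle$, where $(\mathcal{Z}_{u,W})_{a,b,c} = u_a (V^* W^\top)_{c,b}$. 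Orthonormality of the columns of $V^*$ gives $\Vert \mathcal{Z}_{u,W}\Vert_\F^2 = \Vert u\Vert^2 \Vert W\Vert_\F^2 = 1$. Since $\cR$ is an isometry, $M_{u,W}:=\cR^{-1}(\mathcal{Z}_{u,W})$ satisfies $\Vert M_{u,W}\Vert_\F\le 1$, and
\[
u^\top \ttm_1(\widehat{\cE})(I_{d_2}\otimes V^*) w \;=\; \langle M_{u,W}, \widehat{\Sigma}-\Sigma\rangle \;=\; \frac{1}{n}\sum_{i=1}^n\!\bigl(\bX_i^\top M_{u,W}\bX_i - \E\bX^\top M_{u,W}\bX\bigr).
\]
Decomposing $\mathcal{Z}_{u,W}=\sum_{k=1}^K u\otimes W_{\cdot k}\otimes V^*_{\cdot k}$ and using the definition of $\cR^{-1}$ gives the Kronecker-sum representation $M_{u,W}=\sum_{k=1}^K U_{\text{mat}}\otimes W_{k,\text{mat}}\otimes V^*_{k,\text{mat}}$, in which $U_{\text{mat}}$, $W_{k,\text{mat}}$ and $V^*_{k,\text{mat}}$ are the $p\times p$, $q\times q$ and $r\times r$ matricizations of $u$, $W_{\cdot k}$ and $V^*_{\cdot k}$.

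The main obstacle is to turn the fixed-$(u,W)$ MGF bound obtained from Assumption~\ref{as:orlicz} applied to $\Sigma^{1/2} M_{u,W}\Sigma^{1/2}$ (which has Frobenius norm $\le \Vert\Sigma\Vert$) into a uniform bound of the correct effective-dimension complexity. Following the PAC--Bayes route used in~\citep{catoni17,zhivotovskiy24,puchkin24a}, I would introduce Gaussian posteriors concentrated at $(U_{\text{mat}},W_{\cdot 1},\dots,W_{\cdot K})$ together with Gaussian priors whose covariances reflect the partial-trace structure of $\Sigma$: a prior on the $p\times p$ factor $U_{\text{mat}}$ proportional to $\Tr_{2,3}(\Sigma)\otimes\Tr_{2,3}(\Sigma)/\Vert\Sigma\Vert^2$ (contributing $\ttr_1^2(\Sigma)$ to the KL budget), and independent priors on each column $W_{\cdot k}$ proportional to $\Tr_{1,3}(\Sigma)\otimes\Tr_{1,3}(\Sigma)/\Vert\Sigma\Vert^2$ (contributing $K\,\ttr_2^2(\Sigma)$ in total). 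The fixed factor $V^*$ contributes no complexity, which is precisely why no $\ttr_3$ term appears. Combining these KL terms with the $\log(8/\delta)$ from the PAC--Bayes confidence level and the $\omega\Vert\Sigma\Vert$ variance proxy produces the claimed bound.

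The delicate points will be (i) averaging the quadratic forms $\bX_i^\top M_{u,W}\bX_i$ over the posteriors while keeping the centring and a $\omega\Vert\Sigma\Vert$ variance proxy, using Assumption~\ref{as:orlicz} much as in the proofs of Lemmas~\ref{lemma: m1 norm upper bound} and~\ref{lemma: sup U times noise}, and (ii) showing that the prior average of $\Vert \Sigma^{1/2}M_{u,W}\Sigma^{1/2}\Vert_\F^2$ reduces exactly to the product of the intended partial traces, without slack factors in the ambient dimensions $p,q$. Granting these, the sample-size condition $n\ge \ttr_1^2(\Sigma)+K\ttr_2^2(\Sigma)+\log(8/\delta)$ appears when linearising the PAC--Bayes inequality in the sub-Gaussian regime. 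The analogous bound in the other mode is obtained by symmetry, applying the same construction with the roles of $U^*$ and $V^*$ and the first and third modes interchanged.
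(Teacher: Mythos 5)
Your reduction to the bilinear form $\langle M_{u,W},\widehat\Sigma-\Sigma\rangle$ with $M_{u,W}=\cR^{-1}(\sum_{k}u\otimes W_{\cdot k}\otimes V^*_{\cdot k})$ and $\|M_{u,W}\|_\F\le 1$ is correct, and the intuition that the fixed factor $V^*$ contributes no complexity (hence no $\ttr_3$ term) matches the paper: the paper's actual proof simply identifies this supremum with the class $\bbS_1=\{A_1:\R\to\R^{d_1},\ \|A_1\|\le 1\}$, $\bbS_2=\{A_2\in\R^{K\times d_2\times 1}: \|A_2\|_\F\le 1\}$, $\bbS_3=\{V^*\}$ and invokes the general PAC--Bayes concentration result (Theorem~\ref{theorem: concentration on the empirical process}), which uses a delta prior at $V^*$ so that $\log|\bbS_3|=0$.

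However, your choice of prior is a genuine gap. You propose a Gaussian prior on $U_{\mathrm{mat}}$ with covariance proportional to $\Tr_{2,3}(\Sigma)\otimes\Tr_{2,3}(\Sigma)/\|\Sigma\|^2$ and analogously for each $W_{\cdot k}$, claiming this yields KL budgets $\ttr_1^2(\Sigma)$ and $K\ttr_2^2(\Sigma)$. For a posterior $\cN(\mu,C)$ against prior $\cN(0,C)$ the KL divergence is $\tfrac12\mu^\top C^{-1}\mu$; with your non-isotropic $C$ this is $\tfrac12\|\Sigma\|^2\,\mu^\top(\Tr_{2,3}(\Sigma)\otimes\Tr_{2,3}(\Sigma))^{-1}\mu$, which is uncontrolled (indeed infinite if $\Tr_{2,3}(\Sigma)$ is singular) as $\mu$ ranges over the unit sphere — you cannot bound it uniformly by $\ttr_1^2(\Sigma)$. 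The paper instead uses \emph{isotropic} priors $\cN(0,\sigma_i^2 I)$ with $\sigma_1=\ttr_1^{-1}(\Sigma)$, $\sigma_2\propto \ttr_2^{-1}(\Sigma)$ etc., which gives $\KL=\|\mu\|^2/(2\sigma_i^2)$ uniformly over the sphere, and the partial-trace structure enters only through the \emph{variance} terms $\E g^2(\cdot,\cdot,\cdot)$. Beyond this, you also leave two important pieces unaddressed that the paper handles explicitly: (i) the posterior must be truncated to an event $\Upsilon$ on which $\|\Sigma^{1/2}\cR^{-1}(\cdot)\Sigma^{1/2}\|_\F$ is almost surely bounded, so that Assumption~\ref{as:orlicz} applies pointwise, together with a Rademacher-symmetrization argument showing the truncated posterior still has mean $\mu$; and (ii) the actual computation of the posterior second moments, which is where the partial-trace norms and the factor $K$ (coming from $l_1 l_3$-type terms in the general theorem) appear. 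As written, the proposal would not compile into a valid proof without replacing the prior construction and supplying these two pieces.
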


Define events
\begin{align*}
    \event_3 & = \left \{ \Vert \ttm_1(\widehat{\cE} \times_3 (V^*)^\top \Vert \lesssim \omega \Vert \Sigma \Vert \sqrt{\frac{\ttr_1^2(\Sigma) + K \ttr^2_2(\Sigma) + \log(6/\delta)}{n}} \right \}, \\
    \event_4 & = \left \{ \Vert \ttm_3(\widehat{\cE} \times_3 (U^*)^\top \Vert \lesssim \omega \Vert \Sigma \Vert \sqrt{\frac{\ttr_3^2(\Sigma) + J \ttr^2_2(\Sigma) + \log(6/\delta)}{n}}  \right \}.
\end{align*}
Since $n \ge \ttR_{\delta}$ satisfies the conditions of Lemma~\ref{lemma: U star times noise bound} with $\delta/6$ in place of $\delta$, the lemma and the union bound imply $\Pr(\event_3 \cap \event_4) \ge 1 - \delta/3$. On the event $\event_3 \cap \event_4$, we have
\begin{align*}
    \alpha_U \le \widetilde{\alpha}_U \quad \text{and} \quad  \alpha_V \le \widetilde{\alpha}_V,
\end{align*}
where $\widetilde{\alpha}_U, \widetilde{\alpha}_V$ are defined in Table~\ref{tab: ancillary variables}. 

Next, we bound $\beta_U, \beta_V$. Applying the triangle inequality, we get
\begin{align*}
    \beta_U & \le \sup_{\substack{V \in \R^{d_2 \times K} \\ \Vert V \Vert \le 1}} \Vert \ttm_1(\overline{\cE} \times_3 V^\top) \Vert +  \sup_{\substack{V \in \R^{d_2 \times K} \\ \Vert V \Vert \le 1}} \Vert \ttm_1(\widehat{\cE} \times_3 V^\top) \Vert, \\
    \beta_V & \le  \sup_{\substack{U \in \R^{d_1 \times J} \\ \Vert U \Vert \le 1}}\Vert \ttm_3(\overline{\cE}) (U \otimes I_{d_2})\Vert + \sup_{\substack{U \in \R^{d_1 \times J} \\ \Vert U \Vert \le 1}}\Vert \ttm_3(\widehat{\cE}) (U \otimes I_{d_2})\Vert.
\end{align*}
Note that on the event $\event_2$, we have $\beta_V \le \widetilde{\beta}_V$, where $\widetilde{\beta}_V$ is defined in Table~\ref{tab: ancillary variables}. To bound $\beta_U$, we use Lemma~\ref{lemma: sup U times noise} again. Define an event
\begin{align*}
    \event_5 = \left \{\sup_{\substack{V \in \R^{d_2 \times K} \\ \Vert V \Vert \le 1}} \Vert \ttm_1(\widehat{\cE} \times_3 V^\top) \Vert \le 32 \omega \Vert \Sigma \Vert \sqrt{\frac{\ttr_1^2(\Sigma) + K \ttr_2^2(\Sigma) + K \ttr_3^2(\Sigma) + \log(48/\delta)}{n}} \right \}.
\end{align*}
Since $n \ge \ttR_{\delta}$ satisfies the conditions of the lemma with $\delta/6$ in place of $\delta$, we have $\Pr(\event_5) \ge 1 - \delta/6$, and on this event $\beta_U \le \widetilde{\beta}_U$. 

\noindent \textbf{Step 4. Bounding $\sup_{U \in \bbO_{d_1, J}, V \in \bbO_{d_2, K}} \Vert \cE \times_3 V^\top \times_1 U^\top \Vert_{\F}$.} Using the triangle inequality again, we get
\begin{align*}
    \sup_{U \in \bbO_{d_1, J}, V \in \bbO_{d_2, K}} \Vert \cE \times_3 V^\top \times_1 U^\top \Vert_{\F} & \le \sup_{U \in \bbO_{d_1, J}, V \in \bbO_{d_2, K}} \Vert \overline{\cE} \times_3 V^\top \times_1 U^\top \Vert_{\F} \\
    & \quad + \sup_{U \in \bbO_{d_1, J}, V \in \bbO_{d_2, K}} \Vert \widehat{\cE} \times_3 V^\top \times_1 U^\top \Vert_{\F}.
\end{align*}
We bound the second term of the right-hand side using the following lemma. Its proof is given in Section~\ref{section: proof of lemma leading term bound}.
\begin{Lem}
\label{lemma: leading term bound}
    Fix $\delta \in (0, 1)$. Suppose that $n \ge J \ttr_1^2(\Sigma) + JK \ttr_2^2(\Sigma) + K \ttr^2_3(\Sigma) + \log(8/\delta)$. Then, with probability at least $1 - \delta$, we have
    \begin{align*}
        \sup_{U \in \bbO_{d_1, J}, V \in \bbO_{d_2, K}} \Vert \widehat{\cE} \times_3 V^\top \times_1 U^\top \Vert_{\F} \le 32 \omega \Vert \Sigma \Vert \sqrt{\frac{J\ttr_1^2(\Sigma) + JK \ttr_2^2(\Sigma) + K \ttr^2_3(\Sigma) + \log(8/\delta)}{n}}.
    \end{align*}
\end{Lem}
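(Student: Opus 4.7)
The plan is to cast the desired supremum as a concentration problem for a bilinear form in the $\bX_i$, and then apply the variational PAC--Bayes device employed in Lemmas~3.5--3.7. First I would use the Frobenius--inner-product duality together with the fact that $\cR$ and its inverse are isometric:
\begin{align*}
\Vert U^\top \times_1 V^\top \times_3 \widehat{\cE}\Vert_{\F} = \sup_{\substack{\cW \in \R^{J \times d_2 \times K} \\ \Vert\cW\Vert_{\F} \le 1}} \langle \widehat{\cE}, U \times_1 V \times_3 \cW \rangle = \sup_{\substack{\cW \\ \Vert\cW\Vert_{\F} \le 1}} \langle \widehat{\Sigma} - \Sigma, M(U,V,\cW)\rangle,
\end{align*}
where $M(U,V,\cW) := \cR^{-1}(U \times_1 V \times_3 \cW)$ has $\Vert M\Vert_{\F} \le 1$ whenever $U \in \bbO_{d_1,J}$, $V \in \bbO_{d_3,K}$. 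Thus the task becomes to uniformly bound $\langle \widehat{\Sigma} - \Sigma, M\rangle$ over the structured class $\cM := \{M(U,V,\cW)\}$.

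Next, the identity $n\langle \widehat{\Sigma} - \Sigma, M\rangle = \sum_{i} \big(\bX_i^\top M \bX_i - \E \bX_i^\top M \bX_i\big)$ combined with Assumption~\ref{as:orlicz} (after the substitution $\bX_i = \Sigma^{1/2}\Sigma^{-1/2}\bX_i$) yields, for every $\lambda > 0$ with $\lambda\Vert \Sigma\Vert \le 1/\omega$, the uniform MGF bound
\begin{align*}
\log \E \exp\!\Big\{\lambda \sum_{i} \big(\bX_i^\top M \bX_i - \E \bX_i^\top M \bX_i\big)\Big\} \le n \omega^2 \lambda^2 \Vert \Sigma^{1/2} M \Sigma^{1/2}\Vert_{\F}^2.
\end{align*}
Consequently, for $\Psi_\lambda(U,V,\cW) := \lambda n \langle \widehat{\Sigma} - \Sigma, M\rangle - n\omega^2\lambda^2 \Vert\Sigma^{1/2}M\Sigma^{1/2}\Vert_{\F}^2$, we have $\E e^{\Psi_\lambda} \le 1$ pointwise in the parameters. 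Fubini and Markov then give, for any prior $\pi$ on the product space $\bbO_{d_1,J} \times \bbO_{d_3,K} \times \{\cW : \Vert\cW\Vert_{\F}\le 1\}$ and any posterior $\rho$, the PAC--Bayes inequality $\E_\rho \Psi_\lambda \le \KL(\rho\Vert\pi) + \log(1/\delta)$ with probability $1-\delta$.

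The main technical step, and the place where the effective dimensions must be produced, is choosing the prior $\pi$. Mirroring the construction already used for Lemmas~\ref{lemma: m1 norm upper bound}--\ref{lemma: U star times noise bound}, I would let $\pi$ factor into three independent pieces: a Gaussian prior on the columns of $U$ whose covariance is a suitably normalized $\cR_1$-image of $\Tr_{2,3}(\Sigma)$, an analogous Gaussian prior for $V$ built from $\Tr_{1,2}(\Sigma)$, and a Gaussian prior on $\cW$ whose covariance is built from $\Tr_{1,3}(\Sigma)$. Picking $\rho$ to be a small isotropic smoothing of a near-optimizer $(U_\star, V_\star, \cW_\star)$, the Lipschitzness of $M \mapsto \langle \widehat{\Sigma}-\Sigma, M\rangle$ transfers the posterior mean to the supremum up to a negligible error, while the KL divergences compute to $J\ttr_1^2(\Sigma)$, $K\ttr_3^2(\Sigma)$, and $JK\ttr_2^2(\Sigma)$ respectively, using $\Vert \Tr_S(\Sigma)\Vert/\Vert\Sigma\Vert \le \prod_{s\in S}\ttr_s(\Sigma)$ to simultaneously bound $\Vert \Sigma^{1/2}M\Sigma^{1/2}\Vert_{\F}^2 \lesssim \Vert\Sigma\Vert^2$. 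Finally, optimizing $\lambda = \Theta\!\big(\sqrt{(J\ttr_1^2 + JK\ttr_2^2 + K\ttr_3^2 + \log(8/\delta))/n}\big)/(\omega\Vert\Sigma\Vert)$, which is admissible under the stated sample-size condition, yields the claim with the constant $32$ obtained by tracking prefactors exactly as in the earlier lemmas. The principal obstacle is the bookkeeping in this last step: one must show that the quadratic term $n\omega^2\lambda^2\Vert\Sigma^{1/2}M\Sigma^{1/2}\Vert_{\F}^2$ does not inflate by a factor depending on $J$, $K$, or the ambient dimensions after averaging against $\rho$, which requires exploiting the orthogonality of the columns of $U$ and $V$ under the smoothed measure.
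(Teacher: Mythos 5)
Your high-level plan is aligned with the paper's: rewrite the mixed operator/Frobenius norm via the variational identity $\Vert \cdot\Vert_{\F} = \sup_{\Vert\cW\Vert_{\F}\le 1}\langle\cdot,\cW\rangle$, interpret the resulting supremum as an empirical process of quadratic forms $\bX_i^\top M\bX_i$, and bound it via PAC--Bayes with smoothed posteriors. In the paper this lemma is literally an invocation of the general Theorem~\ref{theorem: concentration on the empirical process} with $\bbS_1 = \bbO_{d_1,J}$, $\bbS_2 = \{\cW : \Vert\cW\Vert_{\F}\le 1\}$, $\bbS_3 = \bbO_{d_3,K}$, all infinite, so the final bound reads $\sum_i l_i\ttr_i^2(\Sigma)$ with $l_1 = J$, $l_2 = JK$, $l_3 = K$.

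There are, however, two genuine gaps in your sketch. First, you assert the MGF bound under the sole restriction $\lambda\Vert\Sigma\Vert\le 1/\omega$ and then average against a Gaussian-smoothed posterior. But Assumption~\ref{as:orlicz} is only valid for test matrices with $\Vert V\Vert_{\F}\le 1/\omega$, and under Gaussian smoothing the random $M$ no longer satisfies $\Vert\Sigma^{1/2}M\Sigma^{1/2}\Vert_{\F}\le\Vert\Sigma\Vert$; the Gaussian factors $G_i$ have unbounded norm, so without further work you cannot justify applying the Orlicz inequality pointwise on the support of the posterior. The paper resolves this by defining $\rho$ as the Gaussian smoothing of $(U,V,\cW)$ \emph{conditioned} on the good event $\Upsilon$ where all cross-terms $g(\cdot,\cdot,\cdot)$ are at most a constant multiple of their expectations; Markov's inequality guarantees $\Pr(\Upsilon)\ge 1/8$, and only then does the bound $\Vert\Sigma^{1/2}M\Sigma^{1/2}\Vert_{\F}\lesssim\Vert\Sigma\Vert$ hold $\rho$-almost surely. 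You cannot avoid some such truncation. Second, you say ``Lipschitzness transfers the posterior mean to the supremum up to negligible error,'' but the mechanism the paper uses is \emph{exactness}, not Lipschitzness: the quadratic form $\bX^\top M\bX$ is linear in $M$, and $\rho$ is constructed so that $\E_\rho M = M(U,V,\cW)$ exactly (using the central symmetry of the Gaussian and of the conditioning set $\Upsilon$). A Lipschitz argument would not cleanly recover the desired functional at the deterministic point, and indeed there is no ``negligible error'' term in the paper's bound. Relatedly, your remark about ``exploiting the orthogonality of the columns of $U$ and $V$ under the smoothed measure'' is not right -- Gaussian smoothing destroys orthogonality; what is actually used is $\Vert U\Vert\le 1$ before smoothing to control the KL (giving $\Vert U\Vert_{\F}^2\le J$), and the truncation $\Upsilon$ afterwards.
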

Define the event
\begin{align*}
    \event_6 & = \left \{\sup_{U \in \bbO_{d_1, J}, V \in \bbO_{d_2, K}} \Vert \widehat{\cE} \times_3 V^\top \times_1 U^\top \Vert_{\F} \right. \\
     & \left. \qquad \qquad \qquad  \le 32 \Vert \Sigma \Vert \sqrt{\frac{J\ttr_1^2(\Sigma) + JK \ttr_2^2(\Sigma) + K \ttr^2_3(\Sigma) + \log(48/\delta)}{n}} \right \}.
\end{align*}
Since $n \ge \ttR_{\delta}$ satisfies the conditions of Lemma~\ref{lemma: leading term bound} with $\delta/6$ in place of $\delta$, it implies $\Pr(\event_6) \ge 1 - \delta/6$.

\noindent \textbf{Step 5. Establishing bias and variance leading terms.} The event $\event_0 = \bigcap_{i = 1}^6 \event_i$ has probability at least $1 - \delta$ due to the union bound. On the event $\event_0$, conditions of Theorem~\ref{theorem: sensetivity analysis} are satisfied, so we have
\begin{align*}
    \alpha_U \le \widetilde{\alpha}_U, \quad \alpha_V \le \widetilde{\alpha}_V, \quad \beta_U \le \widetilde{\beta}_U, \quad \beta_V \le \widetilde{\beta}_V
\end{align*}
and
\begin{align*}
    \sup_{U \in \bbO_{d_1, J}, V \in \bbO_{d_2, K}} \Vert \widehat{\cE} \times_3 V^\top \times_1 U^\top \Vert_{\F} \le 32 \omega \Vert \Sigma \Vert \sqrt{\frac{J\ttr_1^2(\Sigma) + JK \ttr_2^2(\Sigma) + K \ttr^2_3(\Sigma) + \log(48/\delta)}{n}}.
\end{align*}
The conclusion of Theorem~\ref{theorem: sensetivity analysis} yields
\begin{align*}
    \Vert \widehat{\cT} - \cT^* \Vert_{\F} & \le \sup_{U \in \bbO_{d_1, J}, V \in \bbO_{d_2, K}} \Vert \overline{\cE} \times_3 V^\top \times_1 U^\top \Vert_{\F} \\
    & \quad + \omega \Vert \Sigma \Vert \sqrt{\frac{J\ttr_1^2(\Sigma) + JK \ttr_2^2(\Sigma) + K \ttr^2_3(\Sigma) + \log(6/\delta)}{n}} \\
    & \quad + 4 \sqrt{K} \widetilde{\alpha}_U + 4 \sqrt{J} \widetilde{\alpha}_U + \diamondsuit_2 + r_T \\
\end{align*}
Substituting expressions for $\widetilde{\alpha}_U, \widetilde{\alpha}_V$ from Table~\ref{tab: ancillary variables}, we obtain
\begin{align*}
    \Vert \widehat{\cT} - \cT^* \Vert_{\F} & \le \sup_{U \in \bbO_{d_1, J}, V \in \bbO_{d_2, K}} \Vert \overline{\cE} \times_3 V^\top \times_1 U^\top \Vert_{\F} + 4 \sqrt{K} \Vert \ttm_1(\overline{\cE} \times_3 (V^*)^\top) \Vert \\
    & \quad + 4 \sqrt{J} \Vert \ttm_3(\overline{\cE} \times_1 (U^*)^\top) \Vert \\
    & \quad + 32 \omega \Vert \Sigma \Vert \sqrt{\frac{J\ttr_1^2(\Sigma) + JK \ttr_2^2(\Sigma) + K \ttr^2_3(\Sigma) + \log(48/\delta)}{n}} \\
     & \quad + 32 \sqrt{J} \omega  \Vert \Sigma \Vert \sqrt{\frac{\ttr_1^2(\Sigma) + K \ttr^2_2(\Sigma) + \log(48/\delta)}{n}} \\
     & \quad + 32 \sqrt{K} \omega \Vert \Sigma \Vert \sqrt{\frac{\ttr_3^2(\Sigma) + J \ttr^2_2(\Sigma) + \log(48/\delta)}{n}} + \diamondsuit_2 + r_T.
\end{align*}
Note that the fifth and sixth terms of the right-hand side are dominated by the fourth term.
Using 
\begin{align*}
    \Vert \widetilde{\Sigma} - \Sigma \Vert_{\F}  & = \Vert \widehat{\cT} - \cT^* + \cT^* - \cR^{-1}(\Sigma) \Vert_{\F} \le \Vert \widehat{\cT} - \cT^* \Vert_{\F} + \Vert \overline{\cE} \Vert_{\F}, \\
    \sup_{U \in \bbO_{d_1, J}, V \in \bbO_{d_2, K}} \Vert \overline{\cE} \times_3 V^\top \times_1 U^\top \Vert_{\F} & \le \sup_{U \in \bbO_{d_1, J}} \sup_{V \in \bbO_{d_3,K}} \Vert U^\top \ttm_1(\overline{\cE})(I_{d_2}\otimes V) \Vert_{\F} \\
    & \le \sqrt{J} \sup_{V \in \bbO_{d_3,K}} \Vert \ttm_1(\overline{\cE})(I_{d_2} \otimes V) \Vert \le \sqrt{J} \Vert \ttm_1(\overline{\cE}) \Vert, \\
    \Vert \ttm_3(\overline{\cE} \times_1 (U^*)^\top) \Vert & \le \Vert \ttm_3(\overline{\cE}) \Vert, \\
    \Vert \ttm_1(\overline{\cE} \times_3 (V^*)^\top) \Vert & \le \Vert \ttm_1(\overline{\cE}) \Vert,
\end{align*}
we derive
\begin{align}
\label{eq: established leading terms of the complete error}
    \Vert \widetilde{\Sigma} - \Sigma \Vert_{\F} \le \overline{\bb} + 96 \omega \Vert \Sigma \Vert \sqrt{\frac{J\ttr_1^2(\Sigma) + JK \ttr_2^2(\Sigma) + K \ttr^2_3(\Sigma) + \log(48/\delta)}{n}} + \diamondsuit_2 + r_T
\end{align}
on $\event_0$.

\noindent \textbf{Step 6. Bounding the remainder terms.} Since $\diamondsuit_2, r_T$ depend on $1/\sigma_J(\ttm_1(\cT^*))$ and $1/\sigma_K(\ttm_3(\cT^*))$, we will bound singular values $\sigma_J(\ttm_1(\cT^*)), \sigma_K(\ttm_3(\cT^*))$ below using $\sigma_J(\cR_1(\Sigma)), \sigma_K(\cR_3(\Sigma))$. By the conditions of the theorem, we have $\sigma_J(\cR_1(\Sigma)) \ge 25 \Vert \ttm_1(\overline{\cE}) \Vert$ and $\sigma_K(\cR_3(\Sigma)) \ge \Vert \ttm_3(\overline{\cE}) \Vert$, so, by the Weyl inequality, we deduce
\begin{align*}
    \sigma_J(\ttm_1(\cT^*)) & \ge \sigma_J(\cR_1(\Sigma)) - \Vert \ttm_1(\overline{\cE}) \Vert \ge \frac{24}{25} \cdot \sigma_J(\cR_1(\Sigma)), \\
    \sigma_K(\ttm_3(\cT^*)) & \ge \sigma_K(\cR_3(\Sigma)) - \Vert \ttm_3(\overline{\cE}) \Vert \ge \frac{24}{25} \cdot \sigma_K(\cR_3(\Sigma)).
\end{align*}
On the event $\event_0$, it implies
\begin{align*}
    \diamondsuit_2 & = 48 \cdot \left ( \frac{\sqrt{K} \beta_V \alpha_U}{\sigma_J(\ttm_1(\cT^*))} + \frac{\sqrt{J} \beta_U \alpha_V}{\sigma_K(\ttm_3(\cT^*))} \right ) \\
    & \le 50 \cdot \left ( \frac{\sqrt{K} \widetilde{\beta}_V \widetilde{\alpha}_U}{\sigma_J(\cR_1(\cT^*))} + \frac{\sqrt{J} \widetilde{\beta}_U \widetilde{\alpha}_V}{\sigma_K(\cR_3(\Sigma))} \right) = \widetilde{\diamondsuit}_2,
\end{align*}
and 
\begin{align*}
    r_T & = 3 (\sqrt{J} + \sqrt{K}) \cdot \left ( \frac{64 \beta_V \beta_U}{\sigma_J(\ttm_1(\cT^*)) \sigma_K(\ttm_3(\cT^*))} \right )^T \Vert \ttm_1(\cE) \Vert \\
    & \le (\sqrt{J} + \sqrt{K}) \left ( \frac{200 \widetilde{\beta}_V \widetilde{\beta}_U}{\sigma_J(\cR_1(\Sigma)) \sigma_K(\cR_3(\Sigma))} \right )^T \Vert \ttm_1(\cE) \Vert.
\end{align*}
Using definition~\eqref{eq: event_1 definition} of the event $\event_1$, $\event_0 \subset \event_1$, and the trinagle inequality $\Vert \ttm_1(\cE) \Vert \le \Vert \ttm_1(\overline{\cE}) \Vert + \Vert \ttm_1(\widehat{\cE}) \Vert$, we obtain
\begin{align*}
    r_T \le \widetilde{r}_T,
\end{align*}
where $\widetilde{r}_T$ is defined in Table~\ref{tab: ancillary variables}. Substituting the above bounds on $\diamondsuit_2, r_T$ into~\eqref{eq: established leading terms of the complete error} finishes the proof.
\end{proof}

\subsection{Proof of Lemma~\ref{lemma: m1 norm upper bound}}

\begin{proof}
\noindent \textbf{Step 1. Reduction to the PAC-bayes inequality.} The analysis will be based the following lemma, which is known as the PAC-Bayes inequality (see, e.g., \cite{catoni17}).
\begin{Lem}
    \label{lem:pac-bayes}
    Let $ \bX, \bX_1, \dots, \bX_n$ be i.i.d. random elements on a measurable space $\cX$. Let $\Theta$ be a parameter space equipped with a measure $\mu$ (which is also referred to as prior). Let $f : \cX \times \Theta \rightarrow \R$. Then, with probability at least $1 - \delta$, it holds that
    \[
        \E_{\btheta \sim \rho} \frac1n \sum\limits_{i = 1}^n f(\bX_i, \btheta)
        \leq \E_{\btheta \sim \rho} \log \E_\bX e^{f(\bX, \btheta)} + \frac{\KL(\rho, \mu) + \log(1 / \delta)}n
    \]
    simultaneously for all $\rho \ll \mu$.
\end{Lem}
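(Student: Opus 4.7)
}
The plan is to apply the standard exponential moment trick together with the Donsker--Varadhan variational representation of the KL divergence. The main work is to construct a single random process, indexed by $\btheta$, that is a supermartingale in expectation over the prior $\mu$, and then use Markov's inequality once to control it uniformly over all posteriors $\rho$.

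\textbf{Step 1. Build a mean-one exponential.} For each fixed $\btheta \in \Theta$, define the centered log-increment
\[
    Z(\bx, \btheta) = f(\bx, \btheta) - \log \E_\bX e^{f(\bX, \btheta)},
\]
so that $\E_\bX e^{Z(\bX, \btheta)} = 1$ by construction (we may assume $\E_\bX e^{f(\bX,\btheta)} < \infty$ on a set of full $\mu$-measure, else the right-hand side of the claim is $+\infty$ and there is nothing to prove). Since $\bX_1, \dots, \bX_n$ are i.i.d., the product form gives
\[
    \E_{\bX_1, \dots, \bX_n} \exp\!\left\{\sum_{i=1}^n Z(\bX_i, \btheta)\right\} = 1
    \qquad \text{for every } \btheta.
\]

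\textbf{Step 2. Integrate over the prior and apply Markov.} Set
\[
    M_n = \E_{\btheta \sim \mu} \exp\!\left\{\sum_{i=1}^n Z(\bX_i, \btheta)\right\}.
\]
By Fubini's theorem (applied to the nonnegative integrand), $\E M_n = 1$. Markov's inequality then yields
\[
    \Pr\!\left(M_n \leq 1/\delta\right) \geq 1 - \delta.
\]

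\textbf{Step 3. Uniform control via Donsker--Varadhan.} We invoke the variational characterization of KL: for any probability measure $\rho \ll \mu$ and any measurable $g$ with $\E_{\btheta \sim \mu} e^{g(\btheta)} < \infty$,
\[
    \E_{\btheta \sim \rho} g(\btheta) \leq \KL(\rho, \mu) + \log \E_{\btheta \sim \mu} e^{g(\btheta)}.
\]
This follows from Jensen's inequality applied to $-\log$ with the density $\frac{d\rho}{d\mu}$, and is exactly the duality that allows a prior-bound to be transferred to any posterior simultaneously. Applying this with $g(\btheta) = \sum_{i=1}^n Z(\bX_i, \btheta)$ on the event $\{M_n \leq 1/\delta\}$ gives, simultaneously for all $\rho \ll \mu$,
\[
    \E_{\btheta \sim \rho} \sum_{i=1}^n Z(\bX_i, \btheta) \leq \KL(\rho, \mu) + \log M_n \leq \KL(\rho, \mu) + \log(1/\delta).
\]

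\textbf{Step 4. Rearrange.} Unfolding $Z$, dividing by $n$, and moving the log-MGF term to the right-hand side yields exactly
\[
    \E_{\btheta \sim \rho} \frac{1}{n} \sum_{i=1}^n f(\bX_i, \btheta)
    \leq \E_{\btheta \sim \rho} \log \E_\bX e^{f(\bX, \btheta)} + \frac{\KL(\rho, \mu) + \log(1/\delta)}{n},
\]
on an event of probability at least $1 - \delta$ that does not depend on $\rho$.

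The only subtle step is Step~3: the key point is that Markov's inequality is applied once, to a single random variable $M_n$ built by averaging over $\mu$, and the Donsker--Varadhan duality then transfers this deterministic bound into a bound uniform over all posteriors. No additional properties of $f$ (boundedness, sub-Gaussianity, etc.) are needed, which is precisely what makes this lemma so flexible for the later applications to operator-norm bounds on the matricized noise tensors.
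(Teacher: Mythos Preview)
Your proof is correct and is precisely the standard Donsker--Varadhan plus Markov argument; the paper does not supply its own proof of this lemma but simply cites \cite{catoni17}, where exactly this route is taken. Nothing to add.
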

Let us rewrite $\Vert \ttm_1(\widehat{\cE}) \Vert$ as the supremum of a certain empirical process. We have
\begin{align*}
    \Vert \ttm_1(\widehat{\cE}) \Vert = & \sup_{\bx \in \bbS^{d_1 - 1}, \by \in \bbS^{d_2 d_3  - 1}} \bx^\top \ttm_1(\widehat{\cE}) \by = \sup_{\bx \in \bbS^{d_1 - 1}, \by \in \bbS^{d_2 d_3 - 1}} \langle \ttm_1(\widehat{\cE}), \bx \by^{\top} \rangle \\
    & = \sup_{\bx \in \bbS^{d_1 - 1}, \by \in \bbS^{d_2 d_3 - 1}} \langle \widehat{\Sigma} - \Sigma, \cR_1^{-1}(\bx \by^\top) \rangle \\
    &  = \sup_{\bx \in \bbS^{d_1 - 1}, \by \in \bbS^{d_2 d_3 - 1}} \frac{1}{n} \sum_{i = 1}^n \langle \bX_i \bX_i^\top, \cR_1^{-1}(\bx \by^\top) \rangle - \E \langle \bX_i \bX_i^\top, \cR_1^{-1}(\bx \by^\top) \rangle \\
    & = \sup_{\bx  \in \bbS^{d_1 - 1}, \by \in \bbS^{d_2 d_3 - 1}} \frac{1}{n} \sum_{i = 1}^n \bX_i^\top \cR_1^{-1}(\bx \by^\top) \bX_i - \E \bX_i^\top \cR_1^{-1}(\bx \by^\top) \bX_i.
\end{align*}
Define the following functions:
\begin{align*}
    f_i(\bx, \by) & = \lambda \left \{ \bX_i^\top \cR_1^{-1}(\bx \by^\top) \bX_i - \E \bX_i^\top \cR_1^{-1}(\bx \by^\top) \bX_i \right \}, \\
    f_{\bX}(\bx, \by) & = \lambda \left \{ \bX^\top \cR_1^{-1}(\bx \by^\top) \bX - \E \bX^\top \cR_1^{-1}(\bx \by^\top) \bX \right \},
\end{align*}
where the positive factor $\lambda$ to be chosen later.
We will apply Lemma~\ref{lem:pac-bayes} to the empirical process
\begin{align*}
    \lambda  \Vert \ttm_1(\widehat{\cE}) \Vert = \sup_{\bx  \in \bbS^{d_1 - 1}, \by \in \bbS^{d_2 d_3 - 1}} \frac{1}{n} \sum_{i = 1}^n f_i(\bx, \by)
\end{align*}
with $\R^{d_1} \otimes \R^{d_2 d_3}$ as the parameter space and the centered Gaussian distribution $\cN(0, \sigma_1^2 I_{d_1}) \otimes \cN(0, \sigma_2^2 I_{d_2 d_3})$ as the prior $\mu$, where $\sigma_1, \sigma_2$ will be defined in the sequel. Consider random vectors $\bxi, \bfeta$ with mutual distribution $\rho_{\bx, \by}$ such that $\E \bxi \bfeta^\top = \bx \by^\top$. Since $f_{i}(\bx, \by), f_{\bX}(\bx, \by)$ are linear in $\bx\by^\top$, we have $\E_{\rho_{\bx, \by}} f_i(\bxi, \bfeta) = f_i(\bx, \by)$, so Lemma~\ref{lem:pac-bayes} yields
\begin{align}
\label{eq: PAC-Bayes upper bound}
     \sup_{\substack{\bx  \in \bbS^{d_1 - 1} \\ \by \in \bbS^{d_2 d_3 - 1}}} \frac{1}{n} \sum_{i = 1}^n f_i(\bx, \by) & \le \sup_{\substack{\bx  \in \bbS^{d_1 - 1} \\ \by \in \bbS^{d_2 d_3 - 1}}} \bigg \{ \E_{\rho_{\bx,\by}} \log \E_{\bX} \exp f_{\bX}(\bxi, \bfeta) \nonumber \\
     & \qquad \quad \qquad \quad +  \frac{\KL(\rho_{\bx, \by}, \mu) + \log(1/\delta)}{n} \bigg \}
\end{align} 
with probability at least $1 - \delta$. Then, we construct $\rho_{\bx,\by}$ such that the right-hand side of the above inequality can be controlled efficiently.

\noindent \textbf{Step 2. Constructing $\rho_{\bx, \by}$.} Suppose for a while that  $\rho_{\bx, \by}$-almost surely we have 
\begin{align}
\label{eq: lambda condition}
\lambda \Vert \Sigma^{1/2} \cR_1^{-1}(\bxi \bfeta^\top)  \Sigma^{1/2} \Vert_{\F} \le 1/\omega.
\end{align}
Then, Assumption~\ref{as:orlicz} implies
\begin{align}
    \E_{\rho_{\bx, \by}} \log \E_{\bX} \exp f_{\bX}(\bxi, \bfeta) & =  \E_{\rho_{\bx, \by}} \log \E_{\bX} \exp \left \{ \lambda \left ( \bX^\top \cR_1^{-1}(\bx \by^\top) \bX - \E \bX^\top \cR_1^{-1}(\bx \by^\top) \bX \right ) \right \} \nonumber \\
    & \le \lambda^2 \omega^2  \E_{\rho_{\bx, \by}} \Vert \Sigma^{1/2} \cR_1^{-1}(\bxi \bfeta^\top) \Sigma^{1/2} \Vert_{\F}^2. 
\label{eq: hanson-wright bound}
\end{align}
So, to control the above and keep the left-hand side of~\eqref{eq: lambda condition} bounded, we do the following. Define independent random vectors $G_1 \sim \cN(0, \sigma_1^2 I_{d_1}), G_2 \sim \cN(0, \sigma_2^2 I_{d_2 d_3})$, and consider a function
\begin{align}
\label{eq: definition of g}
    g(\bx', \by') = \Vert \Sigma^{1/2} \cR_1^{-1}(\bx' (\by')^\top)  \Sigma^{1/2} \Vert_{\F}.
\end{align}
By the triangle inequality, we have
\begin{align*}
    g(\bx + G_1, \by + G_2) \le g(\bx, \by) + g(\bx, G_2) + g(G_1, \by) + g(G_1, G_2),
\end{align*}
so
\begin{align*}
    g^2(\bx + G_1, \by + G_2) \le 4 g^2(\bx, \by) + 4 g^2(\bx, G_2) + 4 g^2 (G_1, \by) + 4 g^2(G_1, G_2).
\end{align*}
Then, the distribution $\rho_{\bx, \by}$ of the random vector $(\bxi, \bfeta)$ is equal to the distribution of $(\bx + G_1, \by + G_2)$ subject to the condition
\begin{align*}
    (G_1, G_2) \in \Upsilon = 
    \begin{Bmatrix} & 
    g^2(\bx, G_1) \le 4 \E g^2(\bx, G_1') \\ 
    (G_1, G_2): & g^2(G_1, \by) \le 4 \E g^2(G_1',\by) \\
    & g^2(G_1, G_2) \le 4 \E g^2(G_1', G_2'),
    \end{Bmatrix},
\end{align*}
where $G_1' \sim \cN(0, \sigma_1^2 I_{d_1}), G_2' \sim \cN(0, \sigma_2^2 I_{d_2})$ are independent copies of $G_1, G_2$.
Note that by the union bound and the Markov inequality, we have
\begin{align}
    \Pr \left ( (G_1, G_2) \not \in \Upsilon \right ) & \le \sum_{(a, b) \in (\{\bx, G_1\} \times \{\by, G_2\} )\setminus \{(\bx, \by)\} } \Pr \left (g^2(a, b) > 4 \E g^2(a, b) \right ) \nonumber \\
    & \le \sum_{(a, b) \in (\{\bx, G_1\} \times \{\by, G_2\} )\setminus \{(\bx, \by)\} } \frac{1}{4} = \frac{3}{4}. \label{eq: prob of Upsilon complement}
\end{align}
Let us check, that $\E_{\rho_{\bx, \by}} \bxi \bfeta^\top = \bx \by^\top$. Since the Gaussian distribution is centrally symmetric and the function $g$ does not change its value when multiplying any of its argument by $-1$, we have 
\begin{align}
(\bxi, \bfeta) \overset{d}{=} (\bx + \eps_1 (\bxi - \bx), \by + \eps_2 (\bfeta - \by)), \label{eq: Rademacher symmetrization}
\end{align}
where $\eps_1, \eps_2$ are i.i.d. Rademacher ramdom variables independent of $(\bxi, \bfeta)$. Then, we obtain
\begin{align*}
    \E \bxi \bfeta^\top = \bx \by^\top + \E \eps_1 \E (\bxi - \bx) \by^\top + \E \eps_2  \E \bx (\bfeta - \by)^\top + \E \eps_1 \E \eps_2 \E (\bxi - \bx) (\bfeta - \by)^\top = \bx \by^\top. 
\end{align*}
Hence, to satisfy the assumption~\eqref{eq: lambda condition} and use~\eqref{eq: hanson-wright bound}, it is enough to bound expectations $\E g^2(a, b)$ for $(a, b) \in \{\bx, G_1\} \times \{\by, G_2\}$.

\noindent \textbf{Step 3. Bounding expectations $\E g^2(\cdot, \cdot)$.} Let us start with $g^2(\bx, \by)$. From the definition~\eqref{eq: definition of g}, we have
\begin{align}
    g^2(\bx, \by) = \Vert \Sigma^{1/2} \cR_1^{-1}(\bx \by^\top) \Sigma^{1/2} \Vert_{\F}^2 & = \Tr(\Sigma^{1/2} \cR_1^{-1}(\bx \by^\top) \Sigma \cR_1^{-\top}(\bx \by^\top) \Sigma^{1/2}) \nonumber \\
    & = \Tr(\Sigma \cR_1^{-1}(\bx \by^\top) \Sigma \cR^{-\top}_1(\bx \by^\top)) \label{eq: trace form of g^2}
\end{align}
Since $\Tr(A B) \le \Vert A \Vert_{\F} \Vert B \Vert_{\F}$ for any matrices $A, B$, we have
\begin{align*}
    g^2(\bx, \by) \le \Vert \Sigma \cR^{-1}_1(\bx \by^\top) \Vert_{\F} \Vert \Sigma \cR^{-\top}_1(\bx \by^\top) \Vert \le \Vert \Sigma \Vert^2 \Vert \bx \by^\top \Vert^2_{\F} = \Vert \Sigma \Vert,
\end{align*}
where we used the fact that $\cR^{-1}_1(\cdot)$ does not change the Frobenius norm and that $\Vert \bx \by^\top \Vert_{\F} = \Vert \bx \Vert \Vert \by \Vert = 1$.

It will be convenient for future purposes to rewrite~\eqref{eq: trace form of g^2} in a slightly different form. We introduce the following tensors, that are reshapings of the matrix $\Sigma$ and vectors $\bx, \by, G_1, G_2$:
\begin{align*}
    \cS_{p_1q_1r_1p_2q_2r_2} = \Sigma_{(p_1 - 1) qr + (q_1 - 1) r + r_1, (p_2 - 1) qr + (q_2 - 1) r + r_2}, \\
    \cG^{(1)}_{p_2p_3} = (G_1)_{(p_2 - 1) \cdot p + p_3}, \quad \cG^{(2)}_{q_2 q_3 r_2 r_3} = (G_2)_{(q_2 - 1) qr^2 + (q_3 - 1) r^2 + (r_2 - 1) r + r_3}, \\
    \ttx_{p_2 p_3} = \bx_{(p_2 - 1) p + p_3}, \quad \tty_{q_2 q_3 r_2 r_3} = \by_{(q_2 - 1) q r^2 + (q_3 - 1)r^2 + (r_2 - 1)r + r_3}.
\end{align*}

Following the Einstein notation, we obtain
\begin{align}
    g^2(\bx, \by)  & =  \Tr(\Sigma \cR^{-1}_1(G_1 \by^\top) \Sigma \cR_1^{-\top}(\bx \by^\top)) \nonumber \\
    & =  \Sigma_{(p_1 - 1) qr + (r_1 - 1) r + r_1, (p_2 - 1) qr + (q_2 - 1) r + r_2} \nonumber \\
    & \quad \times (\bx \by)^{\top}_{(p_2 - 1) p + p_3, (q_2 -1 )qr^2 + (q_3 - 1) r^2 + (r_2 - 1)r + r_3} \nonumber \\
    & \quad \times \Sigma_{(p_3 - 1) qr + (q_3 - 1) r + r_3, (p_4 - 1) qr + (q_4 - 1) r + r_4} \nonumber \\
    & \quad \times (\bx \by)^\top_{(p_1 - 1) p + p_4, (q_1 - 1) qr^2 + (q_4 - 1)r^2 + (r_1 - 1) r + r_4}. \nonumber \\
    & = \cS_{p_1 q_1 r_1 p_2 q_2 r_2} \ttx_{p_2 p_3} \tty_{q_2 q_3 r_2 r_3} \cS_{p_3 q_3r_3 p_4 q_4 r_4} \ttx_{p_1 p_4} \tty_{q_1 q_4 r_1 r_4} \label{eq: einstein form of g^2}
\end{align}
Note that the above holds for any $\bx \in \R^{d_1}, \by \in \R^{d_2 d_3}$.

Then, we bound $\E g^2(G_1, \by)$. Following~\eqref{eq: einstein form of g^2}, we get
\begin{align*}
    \E g^2(G_1,\by) & = \E \cS_{p_1 q_1 r_1 p_2 q_2 r_2} \cG^{(1)}_{p_2 p_3} \tty_{q_2 q_3 r_2 r_3} \cS_{p_3q_3r_3p_4q_4r_4} \cG^{(1)}_{p_1 p_4} \tty_{q_1 q_4 r_1 r_4} \\
    & = \sigma_1^2 \delta_{p_2 p_1} \delta_{p_3 p_4} \cS_{p_1 q_1 r_1 p_2 q_2 r_2} \tty_{q_2 q_3 r_2 r_3} \cS_{p_3 q_3 r_3 p_4 q_4 r_4} \tty_{q_1 q_4 r_1 r_4} \\
    & = \sigma_1^2 \cS_{p_1 q_1 r_1 p_1 q_2 r_2} \tty_{q_2 q_3 r_2 r_3} \cS_{p_3 q_3 r_3 p_3 q_4 r_4} \tty_{q_1 q_4 r_1 r_4}
\end{align*}
where $\delta$ is the Kronecker delta symbol.
The above can be rewritten as the following trace:
\begin{align*}
    \E g^2(G_1, \by) = \sigma_1^2 \cdot \Tr(\Tr_1(\Sigma) Y \Tr_1(\Sigma) Y^\top),
\end{align*}
where entries of the matrix $Y$ are defined by $Y_{(q_2 - 1) r + r_2, (q_3 - 1) r + r_3} = \tty_{q_2q_3r_2r_3}$. Then, we have
\begin{align*}
    \E g^2(G_1, \by) \le \sigma_1^2 \Vert \Tr_1(\Sigma) Y \Vert_{\F} \cdot \Vert \Tr_1(\Sigma) Y^\top \Vert_{\F} \le \sigma_1^2 \Vert \Tr_1(\Sigma) \Vert^2 \cdot \Vert Y \Vert_{\F}^2 = \sigma_1^2 \Vert \Tr_1(\Sigma) \Vert.
\end{align*}

Next, we bound $\E g^2(\bx, G_2)$. Using~\eqref{eq: einstein form of g^2}, we derive
\begin{align*}
    \E g^2(\bx, G_2) & = \E \cS_{p_1 q_1 r_1 p_2 q_2 r_2} \ttx_{p_2 p_3} \cG^{(2)}_{q_2 q_3 r_2 r_3} \cS_{p_3 q_3 r_3 p_4 q_4 r_4} \ttx_{p_1 p_4} \cG^{(2)}_{q_1 q_4 r_1 r_4}\\
    & = \sigma_2^2  \delta_{q_2 q_1} \delta_{q_3 q_4} \delta_{r_2 r_1} \delta_{r_3 r_4} \cS_{p_1 q_1 r_1 p_2 q_2 r_2} \ttx_{p_2 p_3} \cS_{p_3 q_3 r_3 p_4 q_4 r_4} \ttx_{p_1 p_4} \\
    & = \sigma_2^2 \cdot \Tr (\Tr_{2,3}(\Sigma) X \Tr_{2,3}(\Sigma) X^\top),
\end{align*}
where entries of the matrix $X$ are defined by $X_{p_2,p_3} = \ttx_{p_2p_3}$. Then, we have
\begin{align*}
    \E g^2 (\bx, G_2) \le \sigma_2^2 \Vert \Tr_{2,3}(\Sigma) X \Vert_{\F} \cdot \Vert \Tr_{2,3}(\Sigma) X^\top \Vert_{\F} \le \sigma_2^2 \Vert \Tr_{2,3}(\Sigma) \Vert \cdot \Vert X \Vert_{\F}^2 = \sigma_2^2 \cdot \Vert \Tr_{2,3}(\Sigma) \Vert^2.
\end{align*}
Finally, we bound $\E g^2(G_1, G_2)$. Using~\eqref{eq: einstein form of g^2}, we get
\begin{align*}
    \E g^2(G_1, G_2) & = \E \cS_{p_1 q_1 r_1 p_2 q_2 r_2} \cG^{(1)}_{p_2 p_3} \cG^{(2)}_{q_2 q_3 r_2 r_3} \cS_{p_3 q_3 r_3 p_4 q_4 r_4} \cG^{(1)}_{p_1 p_4} \cG^{(2)}_{q_1 q_4 r_1 r_4} \\
    & = \sigma_1^2 \sigma_2^2 \delta_{p_1 p_2} \delta_{p_3 p_4} \delta_{q_1 q_2} \delta_{q_3 q_4} \delta_{r_1 r_2} \delta_{r_3 r_4} \cS_{p_1 q_1 r_1 p_2 q_2 r_2} \cS_{p_3 q_3 r_3 p_4 q_4 r_4}\\
    & = \sigma_1^2 \sigma_2^2 \cdot \Tr^2(\Sigma).
\end{align*}
Hence, we have $\rho_{\bx, \by}$-almost surely:
\begin{align*}
    g(\bxi, \bfeta) & \le 2 \sqrt{\Vert \Sigma \Vert^2 + \sigma_1^2 \Vert \Tr_1(\Sigma) \Vert^2 + \sigma_2^2 \Vert \Tr_{2,3}(\Sigma) \Vert^2 + \sigma_1^2 \sigma_2^2 \Tr^2(\Sigma)}.
\end{align*}
Set $\sigma_1^2 = \ttr_1^{-2}(\Sigma)$ and $\sigma_2^2 =\ttr_2^{-2}(\Sigma) \ttr_3^{-2}(\Sigma)$. By the definition of $\ttr_i(\Sigma)$, for this choice of $\sigma_1, \sigma_2$, the function $g(\bxi, \bfeta)$ is bounded by $4 \Vert \Sigma \Vert$ almost surely.
Thus, using~\eqref{eq: lambda condition} and~\eqref{eq: hanson-wright bound}, we deduce that for any $\lambda$ satisfying
\begin{align*}
    \lambda \le (4 \omega \Vert \Sigma \Vert)^{-1},
\end{align*}
we have
\begin{align}
    \E_{\rho_{\bx, \by}} \log \E_{\bX} \exp f_{\bX}(\bxi, \bfeta) & \le \lambda^2 \omega^2 \cdot \E_{\rho_{\bx, \by}} g^2(\bxi, \bfeta)  \le 16 \lambda^2 \omega^2 \Vert \Sigma \Vert^2. \label{eq: final bound on log E exp f}    
\end{align}

Due to~\eqref{eq: PAC-Bayes upper bound}, it remains to bound the Kullback-Leibler divergence $\KL(\rho_{\bx, \by}, \mu)$. 

\noindent \textbf{Step 4. Bounding the Kullback-Leibler divergence.} The density of $\rho_{\bx, \by}$ is given by
\begin{align*}
    \rho_{\bx, \by}(x, y) & = \frac{(2\pi)^{-(d_1 + d_2 d_3)/2} \sigma_1^{-d_1} \sigma_2^{-d_2 d_3}}{\Pr((G_1, G_2 \in \Upsilon)}  \exp \left \{ -\frac{1}{2\sigma_1^2} \Vert x - \bx \Vert^2 - \frac{1}{2\sigma_2^2} \Vert y - \by \Vert^2 \right \} \\
    & \qquad \times \1\{(x - \bx, y - \by) \in \Upsilon\}.
\end{align*}
The density of the prior $\mu$ is given by
\begin{align*}
    \mu(x, y) & = \frac{(2\pi)^{-(d_1 + d_2 d_3)/2}}{\sigma_1^{d_1} \sigma_2^{d_2 d_3}}  \exp \left \{ -\frac{1}{2\sigma_1^2} \Vert x \Vert^2 - \frac{1}{2\sigma_2^2} \Vert y \Vert^2 \right \}.
\end{align*}
Then, the KL-divergence can be computed as follows:
\begin{align*}
    \KL(\rho_{\bx, \by}, \mu) & = \int_{\R^{d_1 \times d_2 d_3}} \rho_{\bx, \by}(x, y) \log \frac{\rho_{\bx, \by}(x, y)}{\mu(x, y)} dx dy \\
    & = \log \frac{1}{\Pr((G_1, G_2) \in \Upsilon)} \\
    & \quad + \int_{\R^{d_1 \times d_2 d_3}} \rho_{\bx, \by}(x, y) \left \{ -\frac{1}{2\sigma_1^2} (\Vert x - \bx \Vert^2 - \Vert x \Vert^2) - \frac{1}{2\sigma_2^2} (\Vert y - \by \Vert^2 - \Vert y \Vert^2)  \right \} dx dy.
\end{align*}
Due to~\eqref{eq: prob of Upsilon complement}, the first term is bounded by $\log 4$. Note that the second term is equal to:
\begin{align*}
     - \frac{\Vert \bx \Vert^2}{2 \sigma_1^2} + \frac{2}{2\sigma_1^2}\langle \E_{\rho_{\bx, \by}} \bxi, \bx \rangle - \frac{\Vert \by \Vert^2}{2 \sigma_2^2} + \frac{2}{2\sigma_2^2} \langle \E_{\rho_{\bx, \by}} \bfeta, \by \rangle.
\end{align*}
Using~\eqref{eq: Rademacher symmetrization}, we get
\begin{align*}
    \E_{\rho_{\bx, \by}} \bxi & = \bx + \E \eps_1 \E (\bxi - \bx) = \bx, \\
    \E_{\rho_{\bx, \by}} \bfeta & = \by + \E \eps_2 \E (\bfeta - \by) = \by,
\end{align*}
so we have
\begin{align*}
    \KL(\rho_{\bx, \by}, \mu) & \le \log 4 + \frac{\Vert \bx \Vert_2^2}{2\sigma_1^2} + \frac{\Vert \by \Vert_2^2}{2 \sigma_2^2} = \log 4 +  \ttr_1^2(\Sigma) / 2 +  \ttr_2^2(\Sigma) \ttr_3^2(\Sigma) / 2.
\end{align*}
\noindent \textbf{Step 5. Final bound.} Substituting the above bound and bound~\eqref{eq: final bound on log E exp f} into~\eqref{eq: PAC-Bayes upper bound2} and using
\begin{align*}
    \Vert \ttm_1(\widehat{\cE}) \Vert = \frac{1}{\lambda} \sup_{\substack{\bx  \in \bbS^{d_1 - 1} \\ \by \in \bbS^{d_2 d_3 - 1}}} \frac{1}{n} \sum_{i = 1}^n f_i(\bx, \by),
\end{align*}
we get
\begin{align*}
    \Vert \ttm_1(\widehat{\cE}) \Vert & \le 16 \lambda \omega^2 \Vert \Sigma \Vert^2 + \frac{\ttr_1^2(\Sigma) / 2 + \ttr_2^2(\Sigma) \ttr_3^2(\Sigma) / 2 + \log(4/\delta)}{\lambda n}
\end{align*}
for any positive $\lambda \le (4 \omega \Vert \Sigma \Vert)^{-1}$ with probability at least $1 - \delta$. Since $n \ge \ttr_1^2(\Sigma) + \ttr_2^2(\Sigma)\ttr_3^2(\Sigma) + \log(4/\delta)$, we choose
\begin{align*}
    \lambda =  (4 \omega \Vert \Sigma \Vert)^{-1} \sqrt{\frac{\ttr_1^2(\Sigma) / 2 + \ttr_2^2(\Sigma) \ttr_3^2(\Sigma) / 2 + \log(4/\delta)}{n}},
\end{align*} 
and get
\begin{align*}
    \Vert \ttm_1(\widehat{\cE}) \Vert & \le 8 \omega \Vert \Sigma \Vert \sqrt{\frac{\ttr_1^2(\Sigma)/2+ \ttr_2^2(\Sigma) \ttr_3^2(\Sigma)/2 + \log(4/\delta)}{n}} \\
    & \le 32 \omega \Vert \Sigma \Vert \sqrt{\frac{\ttr_1^2(\Sigma) +  \ttr_2^2(\Sigma) \ttr_3^2(\Sigma) + \log(1/\delta)}{n}}.
\end{align*}
\end{proof}

\subsection{Proof of Lemma~\ref{lemma: sup U times noise}}
\label{section: proof of lemma sup U times noise}

\begin{proof}
We deduce Lemma~\ref{lemma: sup U times noise} from the following theorem. Its proof is posteponed to Section~\ref{section: proof of main concentration theorem}.

\begin{Th}
\label{theorem: concentration on the empirical process}
Let $\bbS_1, \bbS_2, \bbS_3$ be sets of linear operators 
\begin{align*}
    \bbS_i & \subset \left \{ A_i :  L_i \to \R^{d_i}, \text{ such that } \Vert A_i\Vert \le 1\right\}, i = 1, 3, \\
    \bbS_2 & \subset \left  \{ A \in L_1 \otimes \R^{d_2} \otimes L_3  \text{ such that } \Vert A \Vert_{\F} \le 1 \right \}.
\end{align*}
For brevity, put $L_2 = L_1 \otimes L_3$. Denote $\dim L_i ~\text{as}~ l_i$.
Then, we have
\begin{align*}
   \sup_{\substack{A_1 \in \bbS_1, \\ A_2 \in \mathbb{S}_{2}, A_3 \in \bbS_3}} \langle \widehat{\cE} \times_3 A_3^\top \times_1 A_1^\top, A_2 \rangle \le 2^7 \omega \Vert \Sigma \Vert \sqrt{\frac{\sum_{i = 1}^3 \min \{\ttr_i^2(\Sigma) \cdot l_i, \log |\bbS_i| \} + \log (8/\delta)}{n}}
\end{align*}
with probability at least $1 - \delta$, provided $n \ge \sum_{i = 1}^3 \min \{\ttr_i^2(\Sigma) \cdot l_i, \log |\bbS_i| \} + \log (8/\delta)$. Here we assume that $\min \{\ttr_i(\Sigma) \cdot l_i, \log |\bbS_i| \} = \ttr_i(\Sigma) \cdot l_i$ if $\bbS_i$ is infinite.
\end{Th}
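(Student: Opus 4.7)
The approach is a three-factor generalization of the PAC--Bayes argument used in the proof of Lemma~\ref{lemma: m1 norm upper bound}. First I would rewrite
\[
\sup_{(A_1, A_2, A_3)} \langle A_1^\top \times_1 A_3^\top \times_3 \widehat{\cE},\, A_2\rangle
= \sup_{(A_1, A_2, A_3)} \frac{1}{n}\sum_{i=1}^n\bigl( \bX_i^\top M_{A_1, A_2, A_3}\bX_i - \E\bX^\top M_{A_1, A_2, A_3}\bX\bigr),
\]
where $M_{A_1, A_2, A_3} = \cR^{-1}(A_1 \times_1 A_3 \times_3 A_2)\in\R^{pqr\times pqr}$. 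With a tuning parameter $\lambda>0$, I would invoke Lemma~\ref{lem:pac-bayes} with a product prior $\mu = \mu_1 \otimes \mu_2 \otimes \mu_3$ on the ambient space. For each factor $i$, the prior/posterior pair is chosen to realize whichever of $\ttr_i^2(\Sigma)\cdot l_i$ and $\log|\bbS_i|$ is smaller: either $\mu_i$ is uniform on $\bbS_i$ and $\rho_i$ is a Dirac at the optimizer (when $\log|\bbS_i|$ is smaller and $\bbS_i$ is finite), or $\mu_i$ is a centered Gaussian on the ambient space and $\rho_i$ is its shift to $A_i$, conditioned on a truncation event $\Upsilon$ described below.

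The key ingredient is the trilinearity of $(A_1, A_2, A_3) \mapsto A_1 \times_1 A_3 \times_3 A_2$: under independent centered Gaussian perturbations $A_i \to A_i + G_i$, the tensor expands into $2^3=8$ summands indexed by subsets $S \subset \{1,2,3\}$, and only the $S=\emptyset$ term has nonzero expectation, since every other term contains at least one $G_i$ appearing linearly and centered. Consequently $\E_\rho M_{\bxi_1,\bxi_2,\bxi_3} = M_{A_1, A_2, A_3}$, allowing the supremum to pass through the posterior expectation exactly as in the proof of Lemma~\ref{lemma: m1 norm upper bound}. The truncation event $\Upsilon$ enforces that each of the seven non-trivial sub-norms $\Vert \Sigma^{1/2}\cR^{-1}(T_S)\Sigma^{1/2}\Vert_{\F}^2$ is at most a large multiple of its expectation; any threshold exceeding $14$ makes $\Pr(\Upsilon)\ge 1/2$ via Markov's inequality and the union bound over the seven events, so the associated extra KL contribution is $O(1)$ and Assumption~\ref{as:orlicz} applies conditionally.

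Expanding $\E_\rho \Vert\Sigma^{1/2}M_{\bxi_1,\bxi_2,\bxi_3}\Sigma^{1/2}\Vert_{\F}^2$ over the eight subsets and performing the Einstein-style tensor computations as in the proof of Lemma~\ref{lemma: m1 norm upper bound}, each contribution reduces to $\prod_{i\in S}\sigma_i^2$ times the squared partial-trace norm $\Vert \Tr_S(\Sigma)\Vert^2$ and benign factors depending only on the unit-norm $A_j$ with $j\notin S$. Applying $\Vert\Tr_S(\Sigma)\Vert \le \Vert\Sigma\Vert \prod_{s\in S}\ttr_s(\Sigma)$, which is precisely why the effective dimensions $\ttr_i$ are defined as maxima of ratios of partial-trace norms in the preliminaries, and tuning the variances $\sigma_i^2$ so that each subset contribution is a fixed fraction of $\Vert\Sigma\Vert^2$, the total posterior variance is bounded by a constant multiple of $\Vert\Sigma\Vert^2$ while the Gaussian-posterior KL contributions add up to $\sum_i \min\{\ttr_i^2(\Sigma) l_i, \log|\bbS_i|\}$. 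Combining the MGF bound from Assumption~\ref{as:orlicz} with the KL bound via Lemma~\ref{lem:pac-bayes} and optimizing $\lambda$ exactly as at the end of the proof of Lemma~\ref{lemma: m1 norm upper bound} yields the stated concentration inequality.

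The main obstacle will be the tensorial cross-term bookkeeping: for each of the seven non-trivial $S$, the expectation of the corresponding squared Frobenius norm must be reduced to the form $\prod_{i \in S}\sigma_i^2 \Vert\Tr_S(\Sigma)\Vert^2$ via index contractions that generalize the three cases treated explicitly in the proof of Lemma~\ref{lemma: m1 norm upper bound}. A secondary subtlety is that the middle factor $i=2$ is constrained only in Frobenius norm, while the desired KL scaling is $\ttr_2^2(\Sigma) l_1 l_3$; matching this requires the Gaussian covariance on $A_2$ to respect the tensor-product structure of $L_2 = L_1 \otimes L_3$ rather than being chosen isotropically.
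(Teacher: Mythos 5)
Your plan matches the paper's proof almost step for step: the same PAC--Bayes argument with a per-mode product prior (Gaussian when $\ttr_i^2(\Sigma)\,l_i$ is smaller, uniform on $\bbS_i$ otherwise), the same truncation event to tame the seven cross-term tails via Markov and a union bound, the same use of trilinearity to preserve the posterior mean, the same Einstein-index computations reducing each subset $S$ to $\prod_{s\in S}\sigma_s^2\,\Vert\Tr_S(\Sigma)\Vert^2$, and the same $\lambda$-tuning at the end. The only small inaccuracy is your remark that the middle factor needs a non-isotropic Gaussian; in the paper the prior on $A_2$ is isotropic, and the scaling $\ttr_2^2(\Sigma)\,l_1 l_3$ comes simply from taking the scalar variance $\sigma_2^2=\ttr_2^{-2}(\Sigma)/(l_1 l_3)$ together with the Frobenius-norm constraint $\Vert A_2\Vert_{\F}\le 1$.
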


Note that 
\begin{align*}
    \sup_{\substack{U \in \R^{d_1 \times J} \\ \Vert U \Vert \le 1}} \Vert \ttm_3(\widehat{\cE}) (U \otimes I_{d_2})\Vert & = \sup_{\substack{U \in \R^{d_1 \times J} \\ \Vert U \Vert \le 1}}\Vert \ttm_3(\widehat{\cE} \times_1 U^\top) \Vert \\
    & = \sup_{\substack{\bx \in \R^{d_3}, \by \in \R^{J d_2}, U \in \R^{d_1 \times J} \\ \Vert \bx \Vert \le 1, \Vert \by \Vert \le 1, \Vert U \Vert \le 1}} \bx^{\top} \ttm_3(\widehat{\cE} \times_1 U^\top) \by.
\end{align*}
can rewritten as the following supremum over scalar product:
\begin{align*}
   \sup_{\substack{A_1 \in \bbS_1, \\ A_2 \in \mathbb{S}_{2}, A_3 \in \bbS_3}} \langle \widehat{\cE} \times_3 A_3^\top \times_1 A_1^\top, A_2 \rangle,
\end{align*}
where 
\begin{align*}
    \bbS_1 & = \{A_1 : \R^{J} \to \R^{d_1} \mid \Vert A_1 \Vert \le 1\}, \\
    \bbS_2 & = \{A_2 \in \R^{J \times d_2 \times 1} \mid \Vert A_2 \Vert_{\F} \le 1\}, \\
    \bbS_3 & = \{A_3 : \R \to \R^{d_3} \mid \Vert A_3 \Vert \le 1 \}.
\end{align*}
Then, Theorem~\ref{theorem: concentration on the empirical process} implies that for any $\delta \in (0,1)$, with probability at least $1 - \delta$, we have
\begin{align*}
     \sup_{\substack{U \in \R^{d_1 \times J} \\ \Vert U \Vert \le 1}} \Vert \ttm_3(\widehat{\cE}) (U \otimes I_{d_2})\Vert \le 2^7 \omega \Vert \Sigma \Vert \sqrt{\frac{J\ttr_1^2(\Sigma) + J \ttr_2^2(\Sigma) + \ttr_3^2(\Sigma) + \log(8/\delta)}{n}},
\end{align*}
if $n \ge J\ttr_1^2(\Sigma) + J \ttr_2^2(\Sigma) + \ttr_3^2(\Sigma) + \log(8/\delta)$.

Analogously, we have 
\begin{align*}
    \sup_{V \in\R^{d_3 \times K}, \Vert V \Vert \le 1} \Vert \ttm_1(\cE) (I_{d_2} \otimes V) \Vert \le 32 \omega \Vert \Sigma \Vert \sqrt{\frac{\ttr_1^2(\Sigma) + K \ttr_2^2(\Sigma) + K\ttr_3^2(\Sigma) + \log(8/\delta)}{n}}
\end{align*}
with probability at least $1 - \delta$, if $n \ge \ttr_1^2(\Sigma) + K \ttr_2^2(\Sigma) + K\ttr_3^2(\Sigma) + \log(8/\delta)$. This completes the proof.
\end{proof}

\subsection{Proof of Lemma~\ref{lemma: U star times noise bound}}
\label{section: proof of lemma U star times noise bound}

\begin{proof}
Note that the norm
\begin{align*}
    \Vert \ttm_1(\widehat{\cE} \times_3 (V^*)^\top) \Vert & = \sup_{\substack{\bx \in \R^{d_1}, \by \in \R^{K d_2} \\ \Vert \bx \Vert \le 1, \Vert \by \Vert \le 1}} \bx^\top \ttm_1(\widehat{\cE} \times_3 (V^*)^\top) \by
\end{align*}
can be rewritten as the following supremum over scalar product:
\begin{align*}
   \sup_{\substack{A_1 \in \bbS_1, \\ A_2 \in \mathbb{S}_{2}, A_3 \in \bbS_3}} \langle \widehat{\cE} \times_3 A_3^\top \times_1 A_1^\top, A_2 \rangle,  
\end{align*}
where 
\begin{align*}
    \bbS_1 & = \{A_1 : \R \to \R^{d_1} \mid \Vert A_1 \Vert \le 1\}, \\
    \bbS_2 & = \{A_2 \in \R^{K \times d_2 \times 1} \mid \Vert A_2 \Vert_{\F} \le 1\}, \\
    \bbS_3 & = \{V^*\}.
\end{align*}
Hence, Theorem~\ref{theorem: concentration on the empirical process} implies that for any $\delta \in (0,1)$, with probability at least $1 - \delta$, we have
\begin{align*}
     \Vert \ttm_1( \widehat{\cE} \times_3 (V^*)^\top) \Vert \le 32 \omega \Vert \Sigma \Vert \sqrt{\frac{\ttr_1^2(\Sigma) + K \ttr_2^2(\Sigma) + \log(8/\delta)}{n}},
\end{align*}
if $n \ge \ttr_1^2(\Sigma) + K \ttr_2^2(\Sigma) + \log(8/\delta)$.
Analogously, we have
\begin{align*}
    \Vert \ttm_3( \widehat{\cE} \times_1 (U^*)^\top) \Vert \le 32 \omega \Vert \Sigma \Vert \sqrt{\frac{\ttr_3^2(\Sigma) + J \ttr^2_2(\Sigma) + \log(8/\delta)}{n}},
\end{align*}
with probability at least $1 - \delta$, if $n \ge   J \ttr_2^2(\Sigma) + \ttr_3^2(\Sigma)+ \log(8/\delta)$. This completes the proof.
\end{proof}

\subsection{Proof of Lemma~\ref{lemma: leading term bound}}
\label{section: proof of lemma leading term bound}

\begin{proof}
Using the variational representation of the Frobenius norm, we observe that 
\begin{align*}
    \sup_{U \in \bbO_{d_1, J}, V \in \bbO_{d_2, K}} \Vert \widehat{\cE} \times_3 V^\top \times_1 U^\top \Vert_{\F} = \sup_{\substack{U \in \bbO_{d_1, J}, V \in \bbO_{d_2, K} \\ W \in \R^{J \times d_2 \times K}, \Vert W \Vert_{\F} \le 1}} \langle \widehat{\cE} \times_3 V^\top \times_1 U^\top, W  \rangle.
\end{align*}
Then, we apply Theorem~\ref{theorem: concentration on the empirical process} with $\bbS_1 = \bbO_{d_1, J}, \bbS_2 = \{W \in \R^{J \times d_2 \times K} : \Vert W \Vert_{\F} \le 1\}, \bbS_3 = \bbO_{d_3, K}$ and get the desired result.
\end{proof}

\section{Proof of Theorem~\ref{theorem: sensetivity analysis}}
\label{section:proof of sensitivity analysis theorem}

\begin{proof}[Proof of Theorem~\ref{theorem: sensetivity analysis}]
The proof follows that of Theorem~1 by~\cite{zhang2018tensor}. For clarity, we divide it into several steps.

\noindent \textbf{Step 1. Reduction to spectral norm of random matrices.} We have
\begin{align}
    \Vert \widehat{\cT} - \cT^* \Vert_{\F}^2 & = \Vert \widehat{\cW} \times_3 \widehat{V} \times_1 \widehat{U} - \cW^* \times_3 V^* \times_1 U^* \Vert_{\F}^2 \nonumber \\
    & = \Vert \widehat{\cW} \times_3 \widehat{V} \times_1 \widehat{U} -  \cW^* \times_3 V^* \times_1 (\widehat{U} \widehat{U}^\top) U^* \Vert_{\F}^2 + \Vert \cW^* \times_3 V^* \times_1 (I - \Pi_{\widehat U}) U^* \Vert_{\F}^2 \nonumber \\
    & = \Vert \widehat{\cW} \times_3 \widehat{V} - \cW^* \times_3 V^* \times_1 (\widehat{U}^\top U^*) \Vert_{\F}^2 +  \Vert \cW^* \times_3 V^* \times_1 (I - \Pi_{\widehat U}) U^* \Vert_{\F}^2 \nonumber\\
    & = \Vert \widehat{\cW} - \cW^* \times_3 (\widehat{V}^\top V^*) \times_1 (\widehat{U}^\top U^*) \Vert_{\F}^2 + \Vert \cW^* \times_3 (I - \Pi_{\widehat{V}}) V^* \times_1 (\widehat{U}^{\top} U^*) \Vert_{\F}^2 \nonumber \\
    & \quad + \Vert \cW^* \times_3 V^* \times_1 (I - \Pi_{\widehat U}) U^* \Vert_{\F}^2. \label{eq: frobenius norm initial bound}
\end{align}
By the construction of $\widehat{\cW}$, the first term is equal to
\begin{align}
    \Vert \cY \times_3 \widehat{V}^\top \times_1 \widehat{U}^\top - \cT^* \times_3 \widehat{V}^\top \times_1 \widehat{U}^\top \Vert_{\F}^2 = \Vert \cE \times_3 \widehat{V}^\top \times_1 \widehat{U}^\top \Vert_{\F}^2. \label{eq: leading term withoput sup-out}
\end{align}
We rewrite the second term as follows:
\begin{align*}
    \Vert \cW^* \times_3 (I - \Pi_{\widehat{V}}) V^* \times_1 (\widehat{U}^\top U^*) \Vert_{\F} = \Vert(I - \Pi_{\widehat{V}}) \ttm_3(\cT^* \times_1 \widehat{U}^\top) \Vert_{\F}.
\end{align*}
 Due to~\eqref{eq: matricization-tensor idenities}, we have $\ttm_3(\cT^* \times_1 \widehat{U}^\top) = \ttm_3(\cT^*)(\widehat{U} \otimes I_{d_2})$, so $\ttm_3(\cT^* \times_1 \widehat{U}^\top)$ has rank at most $K$ and
\begin{align*}
    \Vert (I - \Pi_{\widehat{V}}) \ttm_3(\cT^*) (\widehat{U} \otimes I_{d_2}) \Vert_{\F} & \le \sqrt{K} \Vert (I - \Pi_{\widehat{V}}) \ttm_3(\cT^*) (\widehat{U} \otimes I_{d_2} )\Vert \\
    & = \sqrt{K} \Vert (I - \Pi_{\widehat{V}}) \ttm_3(\cT^* \times_1 \widehat{U}^\top) \Vert \\
    & \le \sqrt{K} \Vert (I - \Pi_{\widehat{V}}) \ttm_3(\cY \times \widehat{U}^\top) \Vert + \sqrt{K} \Vert (I - \Pi_{\widehat{V}}) \ttm_3(\cE \times_1 \widehat{U}_1^\top) \Vert.
\end{align*}
Since $\widehat{V}$ consists of $K$ leading left singular vectors of $\ttm_3(\cY \times_1 \widehat{U})$ and $\ttm_3(\cT^* \times_1 \widehat{U}_1^\top)$ has rank $K$, we have $\Vert (I - \Pi_{\widehat{V}}) \ttm_3(\cY \times \widehat{U}_1)\Vert = \sigma_{K + 1}(\ttm_3(\cY \times_1 \widehat{U}_1)) \le \Vert \ttm_3(\cE \times \widehat{U}_1) \Vert$ by the Weyl inequality . It yields
\begin{align}
    \Vert \cW^* \times_3 (I - \Pi_{\widehat{V}}) V^* \times_1 (\widehat{U}^\top U^*) \Vert_{\F} \le  2 \sqrt{K} \Vert \ttm_3(\cE \times_1 \widehat{U} ^\top) \Vert. \label{eq: second term of intial decomposition random sinfluar values bound}
\end{align}

Then, we bound the third term of~\eqref{eq: frobenius norm initial bound}. We have
\begin{align*}
    \Vert \cW^* \times_3 V^* \times_1 (I - \Pi_{\widehat{U}})U^* \Vert_{\F} & = \Vert \cW^* \times_1 (I - \Pi_{\widehat{U}}) U^* \Vert_{\F} \\
    & \le \sigma^{-1}_{\min}(\widehat{V}^\top_{T - 1} V^*) \Vert \cW^*) \times_3 (\widehat{V}^\top_{T - 1} V^*) \times_1 (I - \Pi_{\widehat{U}}) U^* \Vert_{\F} \\
    & = \sigma^{-1}_{\min}(\widehat{V}^\top_{T - 1} V^*) \Vert(I - \Pi_{\widehat{U}}) \ttm_1(\cT^* \times_3 \widehat{V}^\top_{T - 1}) \Vert_{\F}.
\end{align*}
The matrix $\ttm_1(\cT^* \times_3 \widehat{V}^\top_{T - 1}) = \ttm_1(\cT^*) (I_{d_2} \otimes \widehat{V}_{T - 1})$ has rank at most $J$, so
\begin{align*}
    \Vert(I - \Pi_{\widehat{U}}) \ttm_1(\cT^* \times_3 \widehat{V}^\top_{T - 1}) \Vert_{\F} & \le \sqrt{J} \Vert (I - \Pi_{\widehat{U}}) \ttm_1(\cT^* \times_3 \widehat{V}^\top_{T - 1}) \Vert \\
    & \le \sqrt{J} \Vert (I - \Pi_{\widehat{U}}) \ttm_1(\cY \times_3 \widehat{V}^\top_{T - 1}) \Vert + \sqrt{J} \Vert (I - \Pi_{\widehat{U}}) \ttm_1(\cE \times_3 \widehat{V}^\top_{T - 1}) \Vert.
\end{align*}
Since $\widehat{U}$ consists of $J$ leading left singular vectors of $\ttm_1(\cY \times_3 \widehat{V}^\top_{T - 1})$ and $\ttm_1(\cT^* \times_3 \widehat{V}^\top_{T - 1})$ has the rank at most $J$, we have $\Vert (I - \Pi_{\widehat{U}}) \ttm_1(\cY \times_3 \widehat{V}^\top_{T - 1}) \Vert = \sigma_{J + 1}(\ttm_1(\widehat{\cY} \times_3 \widehat{V}^\top_{T - 1})) \le \Vert \ttm_1(\cE) \times_3 \widehat{V}^\top_{T - 1})\Vert$ by the Weyl inequality. It implies
\begin{align*}
    \Vert \cW^* \times_3 V^* \times_1 (I - \Pi_{\widehat{U}})U^* \Vert_{\F} \le \frac{2 \sqrt{J}}{\sigma_{\min}(\widehat{V}^\top_{T - 1} V^*)} \Vert \ttm_1(\cE \times_3 \widehat{V}^\top_{T - 1}) \Vert.
\end{align*}
Combining~\eqref{eq: frobenius norm initial bound} with~\eqref{eq: leading term withoput sup-out},~\eqref{eq: second term of intial decomposition random sinfluar values bound} and the above display, we get
\begin{align}
    \Vert \widehat{\cT} - \cT^* \Vert_{\F}^2 & \le  \Vert \cE \times_3 \widehat{V}^\top \times_1 \widehat{U}^\top \Vert_{\F}^2 + 4K \Vert \ttm_3(\cE \times_1 \widehat{U} ^\top) \Vert^2 \nonumber \\
    & \quad + \frac{4 J}{\sigma_{\min}^2(\widehat{V}^\top_{T - 1} V^*)} \Vert \ttm_1(\cE \times_3 \widehat{V}^\top_{T - 1}) \Vert \nonumber \\
    & \le \sup_{U \in \bbO_{d_1, J}, V \in \bbO_{d_2,K}} \Vert \cE \times \times_3 V^\top \times_1 U^\top \Vert_{\F}^2 \nonumber \\
    & \quad + 4 K \Vert \ttm_3(\cE \times_1 \widehat{U} ^\top) \Vert^2  + \frac{4 J}{\sigma_{\min}^2(\widehat{V}^\top_{T - 1} V^*)} \Vert \ttm_1(\cE \times_3 \widehat{V}^\top_{T - 1}) \Vert^2.
    \label{eq: step 1 final bound}
\end{align}

\noindent \textbf{Step 2. Bounding $\sigma_{\min}(\widehat{V}_{T - 1}^\top V^*)$, $\Vert \ttm_1(\cE \times_3 \widehat{V}_{T - 1}^\top) \Vert$, $\Vert \ttm_3(\cE \times_1 \widehat{U}^\top) \Vert$.}  To obtain the theorem, we need to bound $\sigma_{\min}(\widehat{V}_{T - 1}^\top \times_3 \cE)$, $\Vert \ttm_1(\cE \times_3 \widehat{V}_{T - 1}^\top) \Vert$, $\Vert \ttm_3(\cE \times_1 \widehat{U}^\top) \Vert$. We start with the latter two norms. We have
\begin{align}
    \Vert \ttm_3(\cE \times_1 \widehat{U}^\top) \Vert = \Vert \ttm_3(\cE) (\widehat{U} \otimes I_{d_2}) \Vert \le \Vert \ttm_3(\cE) (\Pi_{U^*} \widehat{U} \otimes I_{d_2}) \Vert + \Vert \ttm_3(\cE) ((I - \Pi_{U^*}) \widehat{U} \otimes I_{d_2}) \Vert. \label{eq: m_3 bound}
\end{align}
Since $\Pi_{U^*} = U^* (U^*)^\top$, the first term of the above is at most 
\begin{align}
    \Vert \ttm_3(\cE) U^* ((U^*)^\top \widehat{U} \otimes I_{d_2}) \Vert & = \Vert \ttm_3(\cE) (U^* \otimes I_{d_2}) ((U^*)^\top \widehat{U} \otimes I_{d_2}) \Vert \nonumber \\
    & \le \Vert\ttm_3(\cE) (U^* \otimes I_{d_2}) \Vert  \Vert ((U^*)^\top \widehat{U} \otimes I_{d_2}) \Vert \nonumber \\
    & \le \Vert \ttm_3(\cE) (U^* \otimes I_{d_2}) \Vert. \label{eq: leading term m_3 bound}    
\end{align}
For the second term, we have
\begin{align*}
    \Vert \ttm_3(\cE) ((I - \Pi_{U^*}) \widehat{U} \otimes I_{d_2}) \Vert & \le \Vert \ttm_3(\cE) (\frac{(I - \Pi_{U^*})}{\Vert (I - \Pi_{U^*}) \widehat{U} \Vert} \otimes I_{d_2}) \Vert \cdot \Vert (I - \Pi_{U^*}) \widehat{U} \Vert \\
    & \le \sup_{\substack{V \in \R^{d_1 \times J}, \\ \Vert V \Vert = 1} } \Vert \ttm_3(\cE) (V \otimes I_{d_2}) \Vert \cdot \Vert (I - \Pi_{U^*}) \widehat{U} \Vert.
\end{align*}
Then, we have 
$$\Vert (I - \Pi_{U^*})\widehat{U} \Vert = \Vert (I - \Pi_{U^*}) \Pi_{\widehat{U}} \Vert = \Vert (\Pi_{\widehat{U}} - \Pi_{U^*}) \Pi_{\widehat{U}} \Vert \le \Vert \Pi_{\widehat{U}} - \Pi_{U^*} \Vert,$$
where we used $\Img \widehat{U}^\top = \R^K$ and orthogonality of $\widehat{U}$ for the first equality.
To bound the latter norm of the difference, we rely on the following standard proposition, which is proved 
\begin{Prop}
\label{proposition: projector difference via rho}
For two orthogonal matrices $U_1, U_2 \in \bbO_{a,b}$, $a \ge b$, define the following semidistance
\begin{align*}
    \rho(U_1, U_2) = \inf_{O \in \bbO_{b, b}} \Vert U_1 - U_2 O \Vert.
\end{align*}
Then, we have 
\begin{align*}
    \Vert \Pi_{U_1} - \Pi_{U_2} \Vert \le 2 \cdot \rho(U_1, U_2).
\end{align*}
\end{Prop}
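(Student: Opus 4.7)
The plan is to exploit the orthogonal invariance of the projector: for any $O \in \bbO_{b,b}$ we have $(U_2 O)(U_2 O)^\top = U_2 O O^\top U_2^\top = U_2 U_2^\top = \Pi_{U_2}$, so replacing $U_2$ by $U_2 O$ does not alter $\Pi_{U_2}$, while it does change the distance $\Vert U_1 - U_2 O \Vert$. This reduces the proposition to bounding $\Vert \Pi_{U_1} - \Pi_{U_2 O} \Vert$ by $2 \Vert U_1 - U_2 O \Vert$ for an arbitrary $O \in \bbO_{b,b}$, after which one takes the infimum over $O$.

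Next, I would apply the standard ``telescoping'' identity
\begin{align*}
    U_1 U_1^\top - (U_2 O)(U_2 O)^\top = U_1 (U_1 - U_2 O)^\top + (U_1 - U_2 O)(U_2 O)^\top,
\end{align*}
followed by the triangle inequality and submultiplicativity of the spectral norm. Using $\Vert U_1 \Vert = \Vert U_2 O \Vert = 1$ (both matrices have orthonormal columns, and $O$ is orthogonal), this yields $\Vert \Pi_{U_1} - \Pi_{U_2} \Vert \le 2 \Vert U_1 - U_2 O \Vert$ for every $O \in \bbO_{b,b}$. Taking the infimum over $O$ on the right-hand side gives the desired bound $\Vert \Pi_{U_1} - \Pi_{U_2} \Vert \le 2 \rho(U_1, U_2)$.

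There is no genuine obstacle in the argument; it is a short, standard manipulation, and the only thing to be careful about is the bookkeeping of which norms are spectral and the use of the invariance $\Pi_{U_2 O} = \Pi_{U_2}$ to pass from the definition of $\rho$ to the projector difference.
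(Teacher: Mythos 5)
Your proposal is correct and follows essentially the same argument as the paper: a telescoping decomposition of $\Pi_{U_1} - \Pi_{U_2}$ into two terms of the form (orthonormal matrix)$\times$(difference)$^\top$, followed by the triangle inequality, submultiplicativity, and taking the infimum over $O \in \bbO_{b,b}$. The only cosmetic difference is whether the orthogonal freedom $O$ is attached to $U_1$ or $U_2$ in the telescoping, which is immaterial since the spectral norm is invariant under right multiplication by an orthogonal matrix.
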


The proposition implies
\begin{align*}
    \Vert \ttm_3(\cE) ((I - \Pi_{U^*}) \widehat{U} \otimes I_{d_2} \Vert \le 2 \sup_{\substack{V \in \R^{d_1 \times J} \\ \Vert V \Vert = 1}} \Vert \ttm_3(\cE) (V \otimes I_{d_2}) \Vert \cdot \rho(\widehat{U}, U^*).
\end{align*}
Combining the above with~\eqref{eq: m_3 bound} and~\eqref{eq: leading term m_3 bound}, we get
\begin{align}
\label{eq: noise projector 1 second-order expansion}
    \Vert \ttm_3(\widehat{U} \times_1 \cE) \Vert \le \Vert \ttm_3(\cE) (U^* \otimes I_{d_2}) \Vert + 2 \sup_{\substack{V \in \R^{d_1 \times J}  \\ \Vert V \Vert = 1}} \Vert \ttm_3(\cE) (V \otimes I_{d_2}) \Vert \cdot \rho(\widehat{U}, U^*).
\end{align}
Analogously, we have
\begin{align}
\label{eq: noise projector 3 second-order expansion}
    \Vert \ttm_1(\widehat{V}_{T - 1} \times_3 \cE) \Vert \le \Vert \ttm_1(\cE) (I_{d_2} \otimes V^*) \Vert + 2 \sup_{\substack{V \in \R^{d_3 \times K }  \\ \Vert V \Vert \le 1}} \Vert \ttm_1(\cE) (I_{d_2} \otimes V) \Vert \cdot \rho(\widehat{V}_{T - 1}, V^*). 
\end{align}
Finally, we bound $\sigma_{\min}(\widehat{V}_{T - 1}^\top V^*)$ below. We have
\begin{align*}
    \sigma_{\min}^2(\widehat{V}^\top_{T - 1} V^*) & = \lambda_{\min}((V^*)^\top \widehat{V} \widehat{V}^\top V^*) = \lambda_K(\Pi_{V^*} \Pi_{\widehat{V}_{T - 1}} \Pi_{V^*}),
\end{align*}
where we used the fact that $V^* A (V^*)^\top$ has the same singular values as $A$ for any Hermitian $A \in \R^{K \times K}$.
Since $\Pi_{V^*}\Pi_{\widehat{V}} \Pi_{V^*} = \Pi_{V^*} - \Pi_{V^*} (I - \Pi_{\widehat{V}_{T - 1}})  \Pi_{V^*}= \Pi_{V^*} - \Pi_{V^*} (\Pi_{V^*} - \Pi_{\widehat{V}_{T - 1}}) \Pi_{V^*}$, the Weyl inequality implies
\begin{align*}
    \lambda_K(\Pi_{V^*} \Pi_{\widehat{V}_{T - 1}} \Pi_{V^*}) \ge \lambda_K(\Pi_{V^*}) - \Vert \Pi_{V^*}(\Pi_{V^*} - \Pi_{\widehat{V}_{T - 1}}) \Pi_{V^*} \Vert\ge 1 - \Vert \Pi_{\widehat{V}_{T - 1}} - \Pi_{V^*} \Vert.
\end{align*}
Then, Proposition~\ref{proposition: projector difference via rho} yields $\Vert \Pi_{\widehat{V}_{T - 1}} - \Pi_{V^*} \Vert \le 2 \rho(\widehat{V}_{T - 1}, V^*)\}$, so
\begin{align}
\label{eq: orthogonal product lower bound}
    \sigma_{\min}(\widehat{V}_{T - 1}^\top V^*) \ge \sqrt{1 - 2 \rho (\widehat{V}_{T - 1}, V^*)},
\end{align}
provided $\rho(\widehat{V}_{T - 1}, V^*) \le 1/2$.

\noindent \textbf{Step 3. Bounding $\rho(\widehat{U}_t, U^*)$, $\rho(\widehat{V}_{t}, V^*)$ recursively.} We provide a recursive bound on $\rho(\widehat{U}_t, U^*)$ and $\rho(\widehat{V}_t, V^*)$. We widely use the following lemma, which is a weaker variant of the Wedin $\sin \Theta$--theorem:

\begin{Prop}
\label{proposition: devis-kahan sin-theta with rho}
Let $A, B$ be matrices, such that $A$ has rank $r$, and denote $B = A + E$. Let $L$ be left singular vectors of $A$ and $\widehat{L}$ be $r$ leading left singular vectors of $B$. Then
\begin{align*}
    \rho(L, \widehat{L}) \le \frac{2\sqrt{2} \Vert E \Vert}{\sigma_r(A)}.
\end{align*}
\end{Prop}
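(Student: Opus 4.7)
My plan is to combine a Wedin-type identity (exploiting the exact rank of $A$) with a principal-angle comparison between $\rho(L,\widehat L)$ and the sine quantity $\|(I-\Pi_L)\widehat L\|$. First, I dispose of a trivial regime. Principal angles between two $r$-dimensional subspaces lie in $[0,\pi/2]$, so $\rho(L,\widehat L) = 2\sin(\theta_{\max}/2) \le 2\sin(\pi/4) = \sqrt{2}$; thus whenever $\|E\| \ge \sigma_r(A)/2$ the bound $2\sqrt{2}\|E\|/\sigma_r(A) \ge \sqrt{2}$ already holds automatically. I may therefore assume $\|E\| < \sigma_r(A)/2$, which by Weyl's inequality gives $\sigma_r(B) \ge \sigma_r(A) - \|E\| \ge \sigma_r(A)/2 > 0$.

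Second, I write the thin SVD $B = \widehat L \widehat\Sigma \widehat V^\top$ (plus a residual supported on the orthogonal complement), so $\widehat L = B\widehat V \widehat\Sigma^{-1}$. Since $L$ spans $\operatorname{Im}(A)$ and $\rank A = r$, we have $(I-\Pi_L)A = 0$, hence
\[
(I-\Pi_L)\widehat L = (I-\Pi_L)(A+E)\widehat V \widehat\Sigma^{-1} = (I-\Pi_L) E \widehat V \widehat\Sigma^{-1}.
\]
Taking the spectral norm and using $\|\widehat\Sigma^{-1}\| = 1/\sigma_r(B) \le 2/\sigma_r(A)$ yields
\[
\|(I-\Pi_L)\widehat L\| \le \frac{\|E\|}{\sigma_r(B)} \le \frac{2\|E\|}{\sigma_r(A)}.
\]

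Third, I relate $\rho(L,\widehat L)$ to $\|(I-\Pi_L)\widehat L\|$ via principal angles. Writing the SVD $L^\top \widehat L = U_1 \cos\Theta\, V_1^\top$, where $\Theta = \operatorname{diag}(\theta_1,\dots,\theta_r)$ are the principal angles, I choose $O^\star = V_1 U_1^\top \in \bbO_{r,r}$. A direct computation gives
\[
(L - \widehat L O^\star)^\top (L - \widehat L O^\star) = 2I - 2U_1 \cos\Theta\, U_1^\top = 2 U_1 (I-\cos\Theta) U_1^\top,
\]
so $\rho(L,\widehat L) \le \|L - \widehat L O^\star\| = 2\sin(\theta_{\max}/2)$. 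Since $\theta_{\max} \in [0,\pi/2]$, the elementary inequality $2\sin(\theta/2) \le \sqrt{2}\sin\theta$ (equivalent to $\cos(\theta/2) \ge 1/\sqrt{2}$) yields $\rho(L,\widehat L) \le \sqrt{2}\, \sin\theta_{\max} = \sqrt{2}\,\|(I-\Pi_L)\widehat L\|$, and combining with the previous display gives the claim $\rho(L,\widehat L) \le 2\sqrt{2}\|E\|/\sigma_r(A)$.

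The only nontrivial step is the comparison $\rho \le \sqrt{2}\sin\theta_{\max}$, which hinges on the fact that principal angles belong to $[0,\pi/2]$; the rest is a standard Wedin-style manipulation, where the rank-$r$ assumption on $A$ (rather than a gap between $\sigma_r(A)$ and $\sigma_{r+1}(A)$) is what kills the $A$-contribution in $(I-\Pi_L)\widehat L$.
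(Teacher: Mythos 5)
Your proof is correct, and it takes a genuinely different route from the paper's. The paper proves the proposition by citing two black-box results: the Wedin $\sin\Theta$-theorem (applied with $P=B$, $Q=A$, exploiting $\sigma_{r+1}(A)=0$ to get the gap $\delta=\sigma_r(A)/2$), and Lemma~1 of Cai--Zhang (2018) to convert $\|\sin\Theta\|$ into the Procrustes distance $\rho$. You instead re-derive both ingredients from scratch: the identity $(I-\Pi_L)\widehat L=(I-\Pi_L)E\widehat V\widehat\Sigma^{-1}$ (using the thin SVD of $B$ and the annihilation $(I-\Pi_L)A=0$, which is exactly where the rank-$r$ hypothesis enters) gives a one-sided Wedin bound with the same constant, and the principal-angle computation $\rho(L,\widehat L)\le\|L-\widehat L\,V_1U_1^\top\|=2\sin(\theta_{\max}/2)\le\sqrt{2}\sin\theta_{\max}$ replaces the cited Cai--Zhang lemma. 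Your treatment of the degenerate case ($\|E\|\ge\sigma_r(A)/2$, bound $\rho\le\sqrt{2}$ directly) matches the paper's ($\|\sin\Theta\|\le 1$, then multiply by $\sqrt{2}$). The paper's version is shorter by citation; yours is self-contained and makes transparent that the argument needs no spectral gap between $\sigma_r(A)$ and $\sigma_{r+1}(A)$ beyond what exact rank $r$ provides. Both yield the same constant $2\sqrt{2}$.
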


By Proposition~\ref{proposition: devis-kahan sin-theta with rho}, we have
\begin{align}
\label{eq: U0 bound}
    \rho(\widehat{U}_0, U^*) \le  \frac{2 \sqrt{2} \Vert \ttm_1(\cE) \Vert}{\sigma_J(\ttm_1(\cT^*)) }.
\end{align}
To bound $\rho(\widehat{V}_t, V^*)$, we note the following. Since $\widehat{V}_t$ are  leading $K$ left singular vectors of $\ttm_3(\cY \times_1 \widehat{U}_t^\top) = \ttm_3(\cT^* \times_1 \widehat{U}_t^\top) + \ttm_3(\cE \times_1 \widehat{U}_t^\top)$, and there exists an orthogonal matrix $O \in \bbO_{K, K}$ such that $V^* O$ are the left singular vectors of $\ttm_3(\cT^* \times_1 \widehat{U}_t^\top) = V^* \ttm_3(\cW^* \times_1 U^*) (\widehat{U}_t \otimes I_{d_2})$, by the definition of $\rho(\cdot, \cdot)$ and Proposition~\ref{proposition: devis-kahan sin-theta with rho}, we have
\begin{align*}
    \rho(\widehat{V}_0, V^*) \le  \frac{2 \sqrt{2}\Vert \ttm_3(\cE \times_1 \widehat{U}_0) \Vert}{\sigma_K(\ttm_3(\cT^* \times \widehat{U}_0^\top))} \quad \text{and} \quad \rho(\widehat{V}_t, V^*) \le \frac{2 \sqrt{2} \Vert \ttm_3(\cE \times_1 \widehat{U}_t) \Vert}{\sigma_K(\ttm_3(\cT^* \times_1 \widehat{U}^\top_t))}
\end{align*}
for $t = 1, \ldots, T$. Let us bound $\rho(\widehat{V}_t, V^*)$ using $\rho(\widehat{U}_t, U^*)$. First, we have
\begin{align}
    \sigma_{K}(\ttm_3(\cT^* \times_1 \widehat{U}_t^\top)) & = \sigma_K(\ttm_3(\cT^*) (\widehat{U}_t \otimes I_{d_2})) = \sigma_K(\ttm_3(\cT^*) (U^* \otimes I_{d_2}) ((U^*)^\top \widehat{U} \otimes I_{d_2}) ) \label{eq: recursive singular value lower bound for U} \\
    & \ge \sigma_K(\ttm_3(\cT^*) (U^* \otimes I_{d_2})) \sigma_{\min}((U^*)^\top \widehat{U}_t) = \nonumber \\
    & = \sigma_{K}(\ttm_3(\cT^*) (\Pi_{U^*} \otimes I_{d_2})) \sigma_{\min}((U^*)^\top\widehat{U}) \ge \sigma_K(\ttm_3(\cT^*)) \cdot \sqrt{1 - 2 \rho(\widehat{U}_t, U^*)}, \nonumber
\end{align}
provided $\rho(\widehat{U}_t, U^*) < 1/2.$
Second, we bound $\Vert \ttm_3(\cE \times_1 \widehat{U}_t^\top) \Vert$. Following the derivation of~\eqref{eq: noise projector 1 second-order expansion}, we obtain
\begin{align*}
    \Vert \ttm_3(\cE \times_1 \widehat{U}_t^\top) \Vert & = \Vert \ttm_3(\cE) (\widehat{U}_t \otimes I_{d_2}) \Vert \\
    & \le\Vert \ttm_3(\cE) (\Pi_{U^*} \otimes I_{d_2}) (\widehat{U}_t \otimes I_{d_2}) \Vert + \Vert \ttm_3(\cE) ((I - \Pi_{U^*}) \otimes I_{d_1}) (\widehat{U}_t \otimes I_{d_2}) \Vert \\
    & \le \Vert \ttm_3(\cE) (U^* \otimes I_{d_2}) \Vert + \sup_{\substack{U \in \R^{d_1 \times J} \\ \Vert U \Vert \le 1}} \Vert \ttm_3(\cE) (U \otimes I_{d_2}) \Vert \cdot \Vert (I - \Pi_{U^*}) \widehat{U}_t \Vert.
\end{align*}
Since $\widehat{U}_t$ is orthogonal, we have $\Vert (I - \Pi_{U^*}) \widehat{U}_t \Vert = \Vert (I - \Pi_{U^*}) \Pi_{\widehat{U}_t} \Vert$, so
\begin{align*}
    \Vert (I - \Pi_{U^*}) \widehat{U}_t \Vert = \Vert (\Pi_{\widehat{U}_t} - \Pi_{U^*}) \Pi_{\widehat{U}_t} \Vert \le \Vert \Pi_{\widehat{U}_t} - \Pi_{U^*} \Vert \le 2 \rho(\widehat{U}_t, U^*),
\end{align*}
due to Proposition~\ref{proposition: projector difference via rho},
and
\begin{align}
\label{eq: product E and Ut bound}
    \Vert \ttm_3(\cE \times_1 \widehat{U}_t^\top) \Vert \le  \Vert \ttm_3(\cE) (U^* \otimes I_{d_2}) \Vert + 2 \sup_{\substack{U \in \R^{d_1 \times J} \\ \Vert U \Vert \le 1}} \Vert \ttm_3(\cE) (U \otimes I_{d_2}) \Vert \cdot \rho(\widehat{U}_t, U^*).
\end{align}
Following the notation of the theorem, we get
\begin{align}
\label{eq: Vt rho recursive bound}
    \rho(\widehat{V}_t, V^*) \le \frac{2 \sqrt{2} \cdot\left  (\alpha_V + 2 \beta_V \cdot  \rho(\widehat{U}_t, U^*) \right )}{\sigma_K(\ttm_3(\cT^*)) \sqrt{1 - 2 \rho(\widehat{U}_t, U^*)}}.
\end{align}

Next, we will bound $\rho(\widehat{U}_t, U^*)$ using $\rho(\widehat{V}_{t - 1}, V^*)$ for $t \ge 1$. Since $\widehat{U}_t$ are leading $J$ left singular vectors of $\ttm_1(\cY \times_3 \widehat{V}_{t - 1}^\top) = \ttm_1(\cT^* \times_3 \widehat{V}_{t - 1}^\top) + \ttm_1(\cE \times_3 \widehat{V}^\top_{t - 1})$, and there exists an orthogonal matrix $O \in \bbO_{J, J}$ such that $U^* O$ are the left singular vectors of $\ttm_1(\cT^* \times_3 \widehat{V}_{t - 1}^\top) = U^* \ttm_1(\cW^* \times_3 V^*) (I_{d_2} \otimes \widehat{V}_{t - 1})$, by Proposition~\ref{proposition: devis-kahan sin-theta with rho} and the definition of $\rho(\cdot, \cdot)$, we have
\begin{align*}
    \rho(\widehat{U}_{t - 1}, U^*) \le \frac{2 \sqrt{2} \Vert \ttm_1(\cE \times_3 \widehat{V}_{t - 1}^\top) \Vert}{\sigma_J(\ttm_1(\cT^* \times_ 3\widehat{V}_{t - 1}))}.
\end{align*}
Analogously to~\eqref{eq: recursive singular value lower bound for U}, we have
\begin{align*}
    \sigma_J(\ttm_1(\cT^* \times_3 \widehat{V}_{t - 1})) \ge \sigma_J(\ttm_1(\cT^*)) \sqrt{1 - 2 \rho(\widehat{V}_{t - 1}, V^*)},
\end{align*}
provided $\rho(\widehat{V}_{t - 1}, V^*) < 1/2$. Analogously to~\eqref{eq: product E and Ut bound}, we have
\begin{align}
\label{eq: product E and Vt bound}
    \Vert \ttm_1(\cE \times_3 \widehat{V}_{t - 1}) \Vert \le \Vert \ttm_1(\cE) (I_{d_2} \otimes V^*) \Vert + 2 \sup_{\substack{V \in \R^{d_1 \times K} \\ \Vert V \Vert \le 1}} \Vert \ttm_1(\cE) (I_{d_2} \otimes V) \Vert \cdot \rho(\widehat{V}_{t - 1}, V^*).
\end{align}
Thus, using the notation of the theorem, we get
\begin{align}
\label{eq: Ut rho recursive bound}
    \rho(\widehat{U}_t, U^*) \le \frac{2 \sqrt{2} \left (\alpha_U + 2 \beta_U \cdot \rho(\widehat{V}_{t - 1}, V^*)\right )}{\sigma_{J}(\ttm_1(\cT^*)) \sqrt{ 1 - 2 \rho(\widehat{V}_{t - 1}, V^*)}}.
\end{align}

\noindent \textbf{Step 4. Solving the recursion.} We claim that for each $t = 0, \ldots, T$, we have
\begin{align}
\label{eq: 1/4 bound on rho}
    \rho(\widehat{U}_t, U^*) \le 1/4 \quad \text{and} \quad \rho (\widehat{V}_t, V^*) \le 1/4.
\end{align}
Let us prove it by induction. From~\eqref{eq: U0 bound} and conditions of the theorem, we have
\begin{align*}
    \rho(\widehat{U}_0, U^*) \le \frac{3 \Vert \ttm_1(\cE) \Vert}{\sigma_J(\ttm_1(\cT^*))} \le \frac{1}{4}.
\end{align*}
Suppose that we have $\rho(\widehat{U}_t, U^*) \le 1/4$. Let us prove that $\rho(\widehat{V}_t, V^*) \le 1/4$ and $\rho(\widehat{U}_{t + 1}, U^*) \le 1/4$.
First, applying bound~\eqref{eq: Vt rho recursive bound}, we deduce
\begin{align*}
    \rho(\widehat{V}_t, V^*) \le \frac{2 \sqrt{2} (\alpha_V + 2 \beta_V \cdot \rho(\widehat{U}_t, U^*))}{\sigma_K(\ttm_3(\cT^*)) \sqrt{1 - 2 \rho(\widehat{U}_t, U^*)}} \le \frac{4 (\alpha_V + \beta_V /2)}{\sigma_K(\ttm_3(\cT^*))} \le  \frac{6 \beta_V}{\sigma_K(\ttm_3(\cT^*))} \le \frac{1}{4},
\end{align*}
where we used
\begin{align*}
    \alpha_V =   \Vert \ttm_3(\cE) (U^* \otimes I_{d_2}) \Vert \le \sup_{\substack{U \in \R^{d_1 \times J} \\ \Vert U \Vert \le 1}} \Vert \ttm_3(\cE) (U \otimes I_{d_2}) \Vert = \beta_V
\end{align*}
and $\sigma_K(\ttm_3(\cT^*)) \ge 24 \beta_V$ due to conditions of the theorem. Similarly, from~\eqref{eq: Vt rho recursive bound}, we deduce
\begin{align*}
    \rho  (\widehat{U}_{t + 1}, U^*) & \le \frac{2 \sqrt{2} (\alpha_U + 2 \beta_U \cdot \rho(\widehat{V}_t, V^*))}{\sigma_J(\ttm_1(\cT^*)) \sqrt{1 - 2\rho(\widehat{V}_t, V^*)}} \\
    & \le \frac{4 (\alpha_U + \beta_U / 2)}{\sigma_J(\ttm_1(\cT^*))} \le \frac{6 \beta_U}{\sigma_J(\ttm_1(\cT^*))} \le \frac{6 \Vert \ttm_1(\cE) \Vert}{\sigma_J(\ttm_1(\cT^*))} \le \frac{1}{4},
\end{align*}
by the conditions of the theorem and the definition of $\alpha_U, \beta_U$. Hence, for each $t = 0, \ldots, T$, we have $\rho(\widehat{U}_t, U^*) \le 1/4$ and $\rho(\widehat{V}_t, V^*) \le 1/4$.

Hence, we can simplify bounds~\eqref{eq: Vt rho recursive bound},\eqref{eq: Ut rho recursive bound} as follows:
\begin{align*}
    \rho(\widehat{V}_t, V^*) & \le \frac{4\cdot\left  (\alpha_V + 2 \beta_V \cdot  \rho(\widehat{U}_t, U^*) \right )}{\sigma_K(\ttm_3(\cT^*))}, \\
    \rho(\widehat{U}_t, U^*)  & \le \frac{4 \cdot \left (\alpha_U + 2 \beta_U \cdot \rho(\widehat{V}_{t - 1}, V^*)\right )}{\sigma_{J}(\ttm_1(\cT^*)) }.
\end{align*}
We solve these recursive inequalities using the following proposition.
\begin{Prop}
\label{proposition: recursive inequality solution}
Suppose that a sequence of numbers $(\rho_t, \eta_t)$ satisfies
\begin{align*}
    \rho_t & \le x_1 + x_2 \eta_{t} ,\\
    \eta_t & \le y_1 + y_2 \rho_{t - 1}
\end{align*}
for some $x_1, y_1, x_2, y_2$ such that $x_2 y_2 \le 1/2$ and $x_2, y_2 \ge 0$. Then, we have
\begin{align*}
    \rho_t & \le 2(x_1 + x_2 y_1) + x_2 (x_2 y_2)^t \eta_0, \\
    \eta_t & \le 2 (y_1 + x_1 y_2) +(x_2 y_2)^{t} \eta_0.
\end{align*}
\end{Prop}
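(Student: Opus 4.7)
The plan is to decouple the two-term system into a single scalar linear recursion, solve the resulting geometric series, and then substitute back to recover both bounds in turn.

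First, I would plug the hypothesis $\rho_{t-1} \le x_1 + x_2 \eta_{t-1}$ into $\eta_t \le y_1 + y_2 \rho_{t-1}$, which collapses the coupled system into the scalar recursion
\[
\eta_t \le (y_1 + x_1 y_2) + (x_2 y_2)\, \eta_{t-1}, \qquad t \ge 1.
\]
Iterating this $t$ times yields
\[
\eta_t \le (y_1 + x_1 y_2) \sum_{k=0}^{t-1} (x_2 y_2)^k + (x_2 y_2)^t \eta_0.
\]
Since $x_2 y_2 \le 1/2 < 1$, the geometric sum is bounded by $1/(1 - x_2 y_2) \le 2$, which gives the second claimed bound $\eta_t \le 2(y_1 + x_1 y_2) + (x_2 y_2)^t \eta_0$. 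The case $t = 0$ holds trivially because $y_1, x_1 y_2 \ge 0$.

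Next, I would insert this bound back into $\rho_t \le x_1 + x_2 \eta_t$ to obtain
\[
\rho_t \le x_1 + 2 x_2 (y_1 + x_1 y_2) + x_2 (x_2 y_2)^t \eta_0 = x_1(1 + 2 x_2 y_2) + 2 x_2 y_1 + x_2 (x_2 y_2)^t \eta_0,
\]
and then invoke $x_2 y_2 \le 1/2$ once more to upper bound $1 + 2 x_2 y_2 \le 2$, yielding the first claimed bound. For $t=0$ the inequality reduces to $\rho_0 \le x_1 + x_2 \eta_0$, which is weaker than the hypothesis.

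The argument is essentially a routine geometric-series computation; there is no real obstacle. The only subtle point is to notice that the assumption $x_2 y_2 \le 1/2$ must be invoked at two distinct places (once to sum the geometric series, once to absorb the residual $x_1 \cdot 2 x_2 y_2$ term into $x_1$), and that $x_2, y_2 \ge 0$ is needed to preserve the direction of all intermediate inequalities when iterating.
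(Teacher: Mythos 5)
Your proof is correct and follows essentially the same route as the paper: collapse the coupled system into the scalar recursion $\eta_t \le (y_1+x_1 y_2) + (x_2 y_2)\eta_{t-1}$, sum the geometric series using $x_2 y_2 \le 1/2$, and back-substitute into $\rho_t \le x_1 + x_2\eta_t$, invoking $x_2 y_2 \le 1/2$ a second time to absorb the residual term. As a minor aside, the paper's own writeup has a typo ($\rho_0$ in place of $\eta_0$ in the final two displays) that you correctly avoided.
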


Applying Proposition~\ref{proposition: recursive inequality solution} to $\rho_t = \rho(\widehat{V}_t, V^*)$, $\eta_t = \rho(\widehat{U}_t, U^*)$, we obtain
\begin{align}
    \rho(\widehat{V}_t, V^*) & \le \frac{8 \alpha_V}{\sigma_K(\ttm_3(\cT^*))} + \frac{16 \beta_V \alpha_U}{\sigma_J(\ttm_1(\cT^*)) \sigma_K(\ttm_3(\cT^*))}  \nonumber \\
    & \quad + \left ( \frac{64\beta_V \beta_U}{\sigma_J(\ttm_1(\cT^*)) \sigma_K(\ttm_3(\cT^*))}\right )^t \times \frac{24 \beta_V  \Vert \ttm_1(\cE) \Vert}{\sigma_K(\ttm_3(\cT^*)) \sigma_J(\ttm_1(\cT^*))}, \label{eq: rho Vt final bound}\\
    \rho(\widehat{U}_t, U^*) & \le \frac{8 \alpha_U}{\sigma_J(\ttm_1(\cT^*))} + \frac{16 \beta_U \alpha_V}{\sigma_J(\ttm_1(\cT^*)) \sigma_K(\ttm_3(\cT^*))} \nonumber \\
    & \quad + \left ( \frac{64\beta_V \beta_U}{\sigma_J(\ttm_1(\cT^*)) \sigma_K(\ttm_3(\cT^*))}\right )^t \times \frac{3 \Vert \ttm_1(\cE) \Vert}{\sigma_J(\ttm_1(\cT^*))}, \label{eq: rho Ut final bound}
\end{align}
where we used~\eqref{eq: U0 bound} to bound $\eta_0 = \rho(\widehat{U}_0, U^*)$.

\noindent \textbf{Step 4. Final bound.} Let us return to the bound~\eqref{eq: step 1 final bound}. Using $\sqrt{\sum_i a_i} \le \sum_i \sqrt{a_i}$ suitable for any positive numbers $a_i$, we get
\begin{align*}
    \Vert \widehat{\cT} - \cT^* \Vert_{\F} & \le \sup_{U \in \bbO_{d_1, J}, V \in \bbO_{d_2, K}} \Vert \cE \times_3 V^\top \times_1 U^\top \Vert_{\F} \\
    & + 2 \sqrt{K} \Vert \ttm_3(\cE \times_1 \widehat{U}^\top) \Vert + \frac{2 \sqrt{J}}{\sigma_{\min}(\widehat{V}_{T - 1}^\top V^*)} \Vert \ttm_1(\cE \times_3 \widehat{V}_{T - 1}^\top) \Vert.
\end{align*}
Combining~\eqref{eq: 1/4 bound on rho} and~\eqref{eq: orthogonal product lower bound}, we obtain
\begin{align*}
    \Vert \widehat{\cT} - \cT^* \Vert_{\F} & \le \sup_{U \in \bbO_{d_1, J}, V \in \bbO_{d_2, K}} \Vert \cE \times_3 V^\top \times_1 U^\top \Vert_{\F} + 2 \sqrt{K} \Vert \ttm_3(\cE \times_1 \widehat{U}^\top) \Vert + 3 \sqrt{J} \Vert \ttm_1(\cE \times_3 \widehat{V}_{T - 1}^\top) \Vert.
\end{align*}
Then, applying~\eqref{eq: noise projector 1 second-order expansion} and~\eqref{eq: noise projector 3 second-order expansion}, we get
\begin{align*}
\Vert \widehat{\cT} - \cT^* \Vert_{\F} & \le \sup_{U \in \bbO_{d_1, J}, V \in \bbO_{d_2, K}} \Vert \cE \times_3 V^\top \times_1 U^\top \Vert_{\F} + 2 \sqrt{K} (\alpha_V + 2 \beta_V \rho(\widehat{U}_T, U^*)) \\
& \quad + 3 \sqrt{J} (\alpha_U + 2 \beta_U \cdot \rho(\widehat{V}_{T - 1}, V^*)).
\end{align*}
Then, we substitute bounds~\eqref{eq: rho Ut final bound},\eqref{eq: rho Vt final bound} into above, and get
\begin{align*}
    \Vert \widehat{\cT} - \cT^* \Vert_{\F} & \le \sup_{U \in \bbO_{d_1, J}, V \in \bbO_{d_2, K}} \Vert \cE \times_3 V^\top \times_1 U^\top \Vert_{\F} + 2 \sqrt{K} (\alpha_V + v_1 + v_2) \\
& \quad + 3 \sqrt{J} (\alpha_U +u_1 + u_2),
\end{align*}
where
\begin{align*}
    v_1 & = 2 \beta_V \cdot \frac{16 \beta_U \alpha_V}{\sigma_J(\ttm_1(\cT^*)) \sigma_K(\ttm_3(\cT^*))}, \\
    v_2 & = \frac{16 \beta_V \alpha_U}{\sigma_J(\ttm_1(\cT^*))} + \frac{6 \beta_V \Vert \ttm_1(\cE) \Vert}{\sigma_J(\ttm_1(\cT^*))} \times \left ( \frac{64\beta_V \beta_U}{\sigma_J(\ttm_1(\cT^*)) \sigma_K(\ttm_3(\cT^*))}\right )^T, \\
    u_1 & = 2 \beta_U \cdot \frac{16 \beta_V \alpha_U}{\sigma_{J}(\ttm_1(\cT^*)) \sigma_K(\ttm_3(\cT^*))} \\
    u_2 & = \frac{16 \beta_U \alpha_V}{\sigma_K(\ttm_3(\cT^*))} + \left ( \frac{64 \beta_U \beta_V}{\sigma_J(\ttm_1(\cT^*)) \sigma_K(\ttm_3(\cT^*))} \right )^T \Vert \ttm_1(\cE) \Vert.
\end{align*}
Since $\sigma_J(\ttm_1(\cT^*)) \ge 24 \Vert \ttm_1(\cE) \Vert \ge 24 \beta_U$ and $\sigma_K(\ttm_3(\cT^*)) \ge 24 \beta_V$, we have $v_1 \le \alpha_V$, $u_1 \le \alpha_U / 3$ and
\begin{align*}
    v_2 \le \frac{16 \beta_V \alpha_U}{\sigma_J(\ttm_1(\cT^*))} +  \left ( \frac{64\beta_V \beta_U}{\sigma_J(\ttm_1(\cT^*)) \sigma_K(\ttm_3(\cT^*))}\right )^T \Vert \ttm_1(\cE) \Vert.
\end{align*}
Combining the above, we obtain
\begin{align*}
    \Vert \widehat{\cT} - \cT^* \Vert_{\F} & \le \sup_{U \in \bbO_{d_1, J}, V \in \bbO_{d_2, K}} \Vert \cE \times_3 V^\top \times_1 U^\top \Vert_{\F} + 4 \sqrt{K} \alpha_V+ 4 \sqrt{J} \alpha_U + \diamondsuit_2 + r_T,
\end{align*}
where $\diamondsuit_2$ and $r_T$ are introduced in the statement of the theorem.
\end{proof}

\subsection{Proof of Proposition~\ref{proposition: projector difference via rho}}

\begin{proof}
    For any matrix $O \in \bbO_{b,b}$, we have
\begin{align*}
    \Vert \Pi_{\widehat{U}} - \Pi_{U^*} \Vert & = \Vert \widehat{U} \widehat{U}^\top - U^* (U^*)^\top \Vert = \Vert \widehat{U} \widehat{U}^\top - \widehat{U} O (U^*)^\top + \widehat{U} O (U^*)^\top - U^* (U^*)^\top \Vert \\
    & \le \Vert \widehat{U} O (\widehat{U} O  - U^*)^\top \Vert + \Vert (\widehat{U} O - U^*) (U^*)^\top \Vert \le 2 \Vert \widehat{U} O - U^* \Vert.
\end{align*}
Taking the infimum over $O \in \bbO_{b,b}$, we obtain the proposition.
\end{proof}

\subsection{Proof of Proposition~\ref{proposition: devis-kahan sin-theta with rho}}
\begin{proof}[Proof of Proposition~\ref{proposition: devis-kahan sin-theta with rho}]
For two subspaces $X, Y$ define:
\begin{align*}
    \Vert \sin \Theta(X, Y) \Vert = \Vert (I - \Pi_{X}) \Pi_{Y} \Vert.
\end{align*}
Then, the following theorem holds.

\begin{Th}[Wedin $\sin \Theta$-theorem~\citep{wedin1972perturbation} ]
    Let $P, Q$ be $\R^{a \times b}$ matrices. Fix $r \le \min\{a, b\}$. Consider the SVD decomposition of $P = U_0 \Sigma_0 V_0^\top + U_1 \Sigma_1 V_1^\top$, $Q = \widetilde{U}_0 \widetilde{\Sigma}_0 \widetilde{V}_0^\top + \widetilde{U}_1 \widetilde{\Sigma}_1 \widetilde{V}_1^\top$, where $\Sigma_0, \widetilde{\Sigma}_0$ corresponds to the first $r$ singular values of $P, Q$ respectively. Suppose that $\sigma_{\min}(\widetilde{\Sigma}_0) - \sigma_{\max}(\Sigma_1) \ge \delta$. Then, we have
    \begin{align*}
        \Vert \sin \Theta(\Img \widetilde{U}_0, \Img U_0) \Vert \le \frac{1}{\delta} \max\{ \Vert (P - Q) V_0^\top\Vert, \Vert U_0^\top (P - Q) \Vert\}. 
    \end{align*}
\end{Th}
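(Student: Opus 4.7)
This is the classical Wedin $\sin\Theta$ theorem, so the plan is to reproduce its standard proof via a pair of coupled Sylvester-type identities. First, using the definition $\Vert \sin\Theta(X,Y)\Vert = \Vert (I - \Pi_X)\Pi_Y\Vert$ introduced at the beginning of the proof together with the orthonormal completions coming from the two SVDs, the target quantity reduces to the spectral norm of the cross-block $S := U_1^\top \widetilde U_0$; a companion cross-block $T := V_1^\top \widetilde V_0$ on the $V$-side will arise inevitably and must be tracked alongside $S$.

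Second, I would substitute the two SVDs into the products $U_1^\top Q \widetilde V_0$ and $V_1^\top Q^\top \widetilde U_0$, split $Q = P + (Q-P)$, and use the block identities $U_1^\top P = \Sigma_1 V_1^\top$, $Q\widetilde V_0 = \widetilde U_0 \widetilde\Sigma_0$, and their transposed analogues to obtain
\begin{equation*}
S\widetilde\Sigma_0 - \Sigma_1 T = U_1^\top (Q-P)\widetilde V_0, \qquad T\widetilde\Sigma_0 - \Sigma_1^\top S = V_1^\top (Q-P)^\top \widetilde U_0.
\end{equation*}
Passing to spectral norms, the estimates $\Vert S\widetilde\Sigma_0\Vert \geq \sigma_{\min}(\widetilde\Sigma_0)\Vert S\Vert$ and $\Vert \Sigma_1 T\Vert \leq \sigma_{\max}(\Sigma_1)\Vert T\Vert$ turn these identities into a $2 \times 2$ scalar system in $(\Vert S\Vert, \Vert T\Vert)$ that can be solved under the gap condition $\sigma_{\min}(\widetilde\Sigma_0) - \sigma_{\max}(\Sigma_1) \geq \delta$.

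The main obstacle is precisely this coupling: neither identity alone isolates $\Vert S\Vert$, since $\Vert T\Vert$ appears as an unknown on the right-hand side. The clean resolution is to multiply the first scalar inequality by $\sigma_{\min}(\widetilde\Sigma_0)$ and the second by $\sigma_{\max}(\Sigma_1)$, add them, and observe that the mixed $\sigma_{\min}\sigma_{\max}\Vert T\Vert$ terms cancel, leaving $(\sigma_{\min}^2 - \sigma_{\max}^2)\Vert S\Vert = \delta(\sigma_{\min}+\sigma_{\max})\Vert S\Vert$ on the left against a right-hand side that also carries a factor of $\sigma_{\min}+\sigma_{\max}$; dividing produces the claimed $1/\delta$ bound with a $\max$ of the two noise terms on the right. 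A secondary subtlety is that the stated form of the bound uses $(P-Q)V_0$ and $U_0^\top(P-Q)$ (non-tilde), whereas the derivation above naturally yields $E\widetilde V_0$ and $E^\top \widetilde U_0$; passing between the two either requires a symmetric mirror derivation starting from $PV_0 = U_0\Sigma_0$ with the roles of $P$ and $Q$ swapped, or an additional triangle-inequality step using $\widetilde V_0 = V_0(V_0^\top \widetilde V_0) + V_1 T$ to absorb the correction into the already-controlled $\Vert T\Vert$.
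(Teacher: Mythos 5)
The paper does not actually prove this statement: it invokes it as a cited classical result (Wedin, 1972) inside the proof of Proposition~\ref{proposition: devis-kahan sin-theta with rho}, immediately specializing both residual terms on the right to $\Vert E \Vert$. So there is no paper proof to compare against; what you have written is a from-scratch proof of the black-boxed lemma.

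Your argument is the standard Sylvester-pair proof, and its core is sound. Reducing $\Vert\sin\Theta(\Img\widetilde U_0,\Img U_0)\Vert = \Vert\widetilde U_1^\top U_0\Vert$ to $\Vert S\Vert = \Vert U_1^\top\widetilde U_0\Vert$ is legitimate because the two subspaces have the same dimension $r$ (the cross-blocks of an orthogonal matrix share their largest singular value). The two coupled identities are correct, and your decoupling trick (multiply the first scalar bound by $\sigma_{\min}(\widetilde\Sigma_0)$, the second by $\sigma_{\max}(\Sigma_1)$, add, and factor out $\sigma_{\min}+\sigma_{\max}$) is exactly right and gives $\Vert S\Vert \le \delta^{-1}\max\{\Vert U_1^\top(Q-P)\widetilde V_0\Vert,\ \Vert V_1^\top(Q-P)^\top\widetilde U_0\Vert\}$.

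Where you go astray is in the final paragraph, in trying to match the paper's literal right-hand side $\max\{\Vert(P-Q)V_0\Vert,\Vert U_0^\top(P-Q)\Vert\}$ (incidentally, the statement's $(P-Q)V_0^\top$ is a dimensional typo for $(P-Q)V_0$). Given the stated gap $\sigma_{\min}(\widetilde\Sigma_0) - \sigma_{\max}(\Sigma_1)\ge\delta$, the residuals that naturally appear are the tilded ones, and neither of your proposed repairs closes the gap. The ``mirror derivation'' you suggest (starting from $PV_0 = U_0\Sigma_0$) produces the bound with $V_0, U_0$ only under the \emph{reversed} gap condition $\sigma_{\min}(\Sigma_0)-\sigma_{\max}(\widetilde\Sigma_1)\ge\delta$, which is not what was assumed. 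The triangle-inequality route $\widetilde V_0 = V_0(V_0^\top\widetilde V_0) + V_1 T$ gives $\Vert E\widetilde V_0\Vert \le \Vert E V_0\Vert + \Vert E\Vert\,\Vert T\Vert$, introducing an $\Vert E\Vert$ factor that is not controlled by the stated residuals, so it is circular rather than conclusive. The cleaner reading is that the paper's statement has dropped tildes on $V_0, U_0$ — harmlessly, since the application only needs the crude bound $\le\Vert E\Vert$ on both residuals, which holds in either version. Present your derivation as yielding the tilded form, and note the paper's statement should read $\widetilde V_0, \widetilde U_0$; do not try to force the non-tilded form under the given gap hypothesis.
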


To apply the above theorem, consider two cases. If $\sigma_r(A) \ge 2 \Vert E \Vert$, then we apply the above theorem with $\delta =\sigma_r(A)/2$, $P = B$ and $Q = A$, and get
\begin{align*}
    \Vert \sin \Theta(\Img L, \Img \widehat{L}) \Vert \le \frac{2 \Vert E \Vert}{\sigma_r(A)}.
\end{align*}
If $\sigma_r(A) \le 2 \Vert E \Vert$, then
\begin{align*}
    \Vert \sin \Theta(\Img L, \Img \widehat{L}) \Vert \le 1 \le \frac{2 \Vert E \Vert }{\sigma_r(A)}.
\end{align*}
Hence, in either case, we have
\begin{align*}
     \Vert \sin \Theta(\Img L, \Img \widehat{L}) \Vert \le \frac{2 \Vert E \Vert }{\sigma_r(A)}.
\end{align*}
Finally, Lemma 1 of~\citep{cai2018rate} implies that
\begin{align*}
    \rho(L, \widehat{L}) \le \sqrt{2} \Vert \sin \Theta(\Img L, \Img \widehat{L}) \Vert \le \frac{2 \sqrt2 \Vert E \Vert }{\sigma_r(A)},
\end{align*}
and the proposition follows.
\end{proof}

\subsection{Proof of Proposition~\ref{proposition: recursive inequality solution}}
\label{section: proof of recursive inequality solution}

\begin{proof}[Proof of Proposition~\ref{proposition: recursive inequality solution}]
    Combining the initial inequalities, we get
    \begin{align*}
        \eta_t \le y_1 + y_2 x_1 + (x_2 y_2) \eta_{t - 1}.
    \end{align*}
    Iterating the above inequality $t - 1$ times, we get
    \begin{align*}
        \eta_t \le (x_2 y_2)^t \eta_{0} + (y_1 + y_2 x_1)\sum_{i = 0}^{t - 1} (x_2 y_2)^i \le \frac{y_1 + y_2 x_1}{1 - x_2 y_2} + (x_2 y_2)^t \eta_0.
    \end{align*}
    Using $x_2 y_2 \le 1/2$, we obtain
    \begin{align*}
        \eta_t \le 2 (y_1 + y_2 x_1) + (x_2 y_2)^t \rho_0.
    \end{align*}
    Combining the above with the bound $\rho_t \le x_1 + x_2 \eta_t$, we derive
    \begin{align*}
        \rho_t \le x_1 + 2 (y_1 x_2 + x_2 y_2 x_1) + x_2 (x_2 y_2)^t \rho_0 \le 2(x_1 + x_2 y_1) + x_2 (x_2 y_2)^{t} \rho_0,
    \end{align*}
    where we used $x_2 y_2 \le 1/2$ again.
\end{proof}

\section{Proof of Theorem~\ref{theorem: concentration on the empirical process}}
\label{section: proof of main concentration theorem}

\begin{proof}
\noindent \textbf{Step 1. Reduction to the PAC-bayes inequality.} Let us rewrite the core expression, as a supremum of a certain empirical process. We have:



\begin{align*}
    & \sup_{(A_1, A_2, A_3) \in \prod_{i = 1}^3 \bbS_i} \langle \widehat{\cE} \times_3 A_3^\top \times_1 A_1^\top, A_2\rangle  = \sup_{(A_1, A_2, A_3) \in \prod_{i = 1}^3 \bbS_i} \langle A_2 \times_3 A_3^\top \times_1 A_1^\top, \widehat{\cE}\rangle \\ 
    & \qquad =  \sup_{(A_1, A_2, A_3) \in \prod_{i = 1}^3 \bbS_i} \langle A_2 \times_3 A_3 \times_1 A_1, \widehat{\cE} \rangle \\
    & \qquad  = \sup_{(A_1, A_2, A_3) \in \prod_{i = 1}^3 \bbS_i} \left\langle A_2 \times_3 A_3 \times_1 A_1, \sum_{i=1}^n \frac{1}{n}\mathcal{R}(\bX_i\bX_i^\top - \mathbb{E}(\bX\bX^\top)) \right\rangle \\ 
    & \qquad = \sup_{(A_1, A_2, A_3) \in \prod_{i = 1}^3 \bbS_i} \left\langle \cR^{-1} (A_2 \times_3 A_3 \times_1 A_1), \frac{1}{n} \sum_{i=1}^n \bX_i\bX_i^\top - \mathbb{E}(\bX\bX^\top) \right\rangle \\
    & \qquad = \sup_{(A_1, A_2, A_3) \in \prod_{i = 1}^3 \bbS_i} \frac{1}{n}\sum_{i=1}^n \left\{ \bX_i^\top \cR^{-1} (A_2 \times_3 A_3 \times_1 A_1)\bX_i \right. \\
    & \qquad \qquad \qquad \qquad \qquad \qquad \qquad \left. - \E\bX^\top\cR^{-1} (A_2 \times_3 A_3 \times_1 A_1)\bX\right\}.
\end{align*}

Define the following functions:
\begin{align*}
    f_i(A_2 \times_3 A_3 \times_1 A_1) & = \lambda \left \{ \bX_i^\top \cR^{-1}(A_2 \times_3 A_3 \times_1 A_1) \bX_i - \E \bX_i^\top \cR^{-1}(A_2 \times_3 A_3 \times_1 A_1) \bX_i \right \}, \\
    f_{\bX}(A_2 \times_3 A_3 \times_1 A_1) & = \lambda \left \{ \bX^\top \cR^{-1}(A_2 \times_3 A_3 \times_1 A_1) \bX - \E \bX^\top \cR^{-1}(A_2 \times_3 A_3 \times_1 A_1) \bX \right \},
\end{align*}
where the positive factor $\lambda$ will be chosen later.
We will apply Lemma~\ref{lem:pac-bayes} to the empirical process
\begin{align*}
     \sup_{(A_1, A_2, A_3) \in \prod_{i = 1}^s \bbS_i}\frac{1}{n} \sum_{i = 1}^n f_i(A_1, A_2, A_3)
\end{align*}
with the parameter space defined by the target spaces $L_i$ dimensionalities and the prior distribution $\mu$, constructed as a product of independent measures for each subspace separately. Choosing bases in $L_1, L_2, L_3$, we identify $A_1, A_2$ with corresponding matrices and $A_3$ with a corresponding tensor. 
Define linear spaces $\bbL_ 1 = \R^{d_1 \times l_1}, \bbL_2 = \R^{l_1 \times d_2 \times l_3}$ and $\bbL_3 = \R^{d_2 \times l_3}$, and consider distributions $\cD_i$ over $\bbL_i$ defined as follows:
\begin{align*}
    \cD_i = \begin{cases}
        \cN(0, \sigma_i I_{l_i d_i}), & \text{ if } l_i \cdot \ttr_i(\Sigma) \le \log|\bbS_i|, \\
        \operatorname{Uniform}(\bbS_i), & \text{ if } l_i \cdot \ttr_i(\Sigma) > \log |\bbS_i|,
    \end{cases}
\end{align*}
for some $\sigma_1, \sigma_2, \sigma_3$ to be chosen later, assuming that samples from the normal distribution have appropriate shapes.
Then, we put
\begin{align*}
    \mu = \cD_1 \otimes \cD_2 \otimes \cD_3.
\end{align*}
Consider random vectors $P, Q, R$ with mutual distribution $\rho_{A_1, A_2, A_3}$ such that $\E Q \times_3 R \times_1 P = A_2 \times_3 A_3 \times_1 A_1$. Since $f_{i}(A_1, A_2, A_3), f_{\bX}(A_1, A_2, A_3)$ are linear in $A_2 \times_3 A_3 \times_1 A_1$, we have $\E_{\rho_{A_1, A_3, A_2}} f_i(P, Q, R) = f_i(A_1, A_2, A_3)$, so Lemma~\ref{lem:pac-bayes} yields
\begin{equation}
\begin{aligned}
\label{eq: PAC-Bayes upper bound2}
        & \sup_{\substack{A_1 \in \bbS_1, \\ A_2 \in \mathbb{S}_{2}, A_3 \in \bbS_3}}
        \frac{1}{n} \sum_{i = 1}^n f_i(A_1, A_2, A_3)  \\ & \quad \le \sup_{\substack{A_1 \in \bbS_1, \\ A_2 \in \mathbb{S}_{2}, A_3 \in \bbS_3}} \left \{ \E_{\rho_{A_1, A_2, A_3}} \log \E_{\bX} \exp f_{\bX}(P, Q, R) +  \frac{\KL(\rho_{A_1, A_2, A_3}, \mu) + \log(1/\delta)}{n} \right \}
\end{aligned}
\end{equation}
with probability at least $1 - \delta$. Then, we construct $\rho_{A_1, A_2, A_3}$ such that the right-hand side of the above inequality can be controlled efficiently.

\noindent \textbf{Step 2. Constructing $\rho_{A_1, A_2, A_3}$.} Suppose for a while that  $\rho_{A_1, A_2, A_3}$-almost surely we have 
\begin{align}
\label{eq: lambda condition2}
\lambda \Vert \Sigma^{1/2} \cR^{-1}(Q \times_3 R \times_1 P)  \Sigma^{1/2} \Vert_{\F} \le 1/\omega.
\end{align}
Then, Assumption~\ref{as:orlicz} implies
\begin{equation}
\begin{aligned}
& \E_{\rho_{A_1, A_2, A_3}} \log \E_{\bX} \exp f_{\bX}(P, Q, R) \\
    & \qquad = \E_{\rho_{A_1, A_2, A_3}} \log \E_{\bX} \exp \left \{ \lambda \left ( \bX^\top \cR^{-1}(Q \times_3 R \times_1 P) \bX \right. \right.  \\
    & \qquad \qquad \qquad \qquad \qquad \qquad \qquad \left. \left. - \E \bX^\top \cR^{-1}(Q \times_3 R \times_1 P) \bX \right ) \right \} \\
    & \qquad \le \lambda^2 \omega^2  \E_{\rho_{A_1, A_2, A_3}} \Vert \Sigma^{1/2} \cR^{-1}(Q \times_3 R \times_1 P) \Sigma^{1/2} \Vert_{\F}^2.
\label{eq: hanson-wright bound2}
\end{aligned}
\end{equation}
So, to control the above and keep the left-hand side of~\eqref{eq: lambda condition2} bounded, we do the following. Consider random matrices $G_1 \in \R^{d_1 \times l_1}, G_3 \in \R^{d_3 \times l_3}$ and a random tensor $G_3 \in \R^{l_1 \times d_2 \times l_3}$ such that
\begin{align*}
    \rmvec{(G_i)} \sim \begin{cases}
        \cN(0, \sigma_i I_{d_i l_i}), & \text{ if } \ttr_i(\Sigma) \le \log |\bbS_i|, \\
        \delta_0, & \text{ if } l_i \cdot \ttr_i(\Sigma)  > \log |\bbS_i|,
    \end{cases}
\end{align*}
where $\delta_0$ is the delta measure supported on $0 \in \R^{d_i l_i}$.
Then, define a function $g: \R^{d_1 \times l_1} \times \R^{}$
\begin{align}
\label{eq: definition of f}
    g(u', v', w') = \Vert \Sigma^{1/2} \cR^{-1}(v' \times_3 w' \times_1 u')  \Sigma^{1/2} \Vert_{\F}^2.
\end{align}
Sequentially applying the triangle inequality for the Frobenius norm and using $(a + b)^2 \le 2a^2 + 2b^2$, we obtain
\begin{align}
    &f(A_1 + G_1, A_2 + G_2, A_3+ G_3) \le 2g(A_1, A_2 + G_2, A_3+ G_3) + 2g(G_1, A_2 + G_2, A_3+ G_3) \nonumber \\ 
    & \le 4g(A_1, A_2, A_3+ G_3) + 4g(G_1, G_2, A_3+ G_3) \nonumber \\
    & \quad + 4g(A_1, G_2, A_3+ G_3) + 4g(G_1, A_2, A_3+ G_3) \nonumber \\
    & \le 8g(A_1, A_3, A_2) + 8g(A_1, G_2, G_3) + 8g(A_1, A_3, G_3) + 8g(A_1, G_2, A_2) \nonumber \\ & \quad + 8g(G_1, A_3, A_2) + 8g(G_1, G_2, G_3) + 8g(G_1, A_3, G_3) + 8g(G_1, G_2, A_2). \label{eq: triangle upper bound}
\end{align}
Then, we define the distribution $\rho_{A_1, A_2, A_3}$ of the random vector $(P, Q, R)$ as the distribution of $(A_1 + G_1, A_2 + G_2, A_3 + G_3)$ subject to the condition
\begin{align*}
    (G_1, G_2, G_3)  \in \Upsilon & = 
    \begin{Bmatrix}
        & g(G_1, A_2, A_3) \le 8 \E g(G_1', A_2, A_3) \\
        & g(A_1, G_2, A_3) \le 8 \E g(A_1, G_2', A_3) \\
        & g(A_1, A_2, G_3) \le 8 \E g(A_1, A_2, G_3') \\
        (G_1, G_2, G_3) : & g(G_1, G_2, A_3) \le 8 \E g(G_1', G_2', A_3) \\
        & g(G_1, A_2, G_3) \le 8 \E g(G_1', A_2, G_3') \\
        & g(A_1, G_2, G_3) \le 8 \E g(A_1, G_2', G_3') \\
        & g(G_1, G_2, G_3) \le 8 \E g(G_1', G_2', G_3') \\
    \end{Bmatrix},
\end{align*}
where $G_1' \sim \cD_1, G_2' \sim \cD_2, G_3' \sim \cD_3$ are independent copies of $G_1, G_2, G_3$. Put $\Gamma = (\{A_1, G_1\} \times \{A_2, G_2\} \times \{ A_3, G_3\} ) \setminus \{(A_1, A_3, A_2)\}.$
Note that by the union bound and the Markov inequality, we have
\begin{align}
    \Pr \left ( (G_1, G_2, G_3) \not \in \Upsilon \right ) & \le \sum_{(a, b, c) \in \Gamma} \Pr \left (f(a, b, c) > 8 \E f(a, b, c) \right ) \nonumber \\
    & \le \sum_{(a, b, c) \in \Gamma } \frac{1}{8} = \frac{7}{8}. \label{eq: prob of Upsilon complement2}
\end{align}
Combining the definition of Upsilon with upper bound~\eqref{eq: triangle upper bound} implies the following bound on $g(P, Q, R)$:
\begin{align}
    g(P, Q, R) \le 64 \left (g(A_1, A_2, A_3) + \E g(A_1, A_2, G_3) + \E g(A_1, G_2, A_3) + \E g (A_1, G_2, G_3)  \right. \nonumber \\
    \left. + \E g(G_1, A_2, A_3) + \E g(G_1, A_2, G_3) + \E g(G_1, G_2, A_3) + \E g(G_1, G_2, G_3) \right ), \label{eq: almost sure triangle upper bound}
\end{align}
which holds $\rho_{A_1, A_2, A_3}$-almost surely.

Let us check that $\E_{\rho_{A_1, A_3, A_2}} Q \times_3 R \times_1 P = A_2 \times_3 A_3 \times_1 A_1$. Since both the Gaussian distribution and $\delta_0$ are centrally symmetric and the function $f$ does not change its value when multiplying any of its argument by $-1$, we have 
\begin{align}
(P, Q, R) \overset{d}{=} (A_1 + \eps_1 (P - A_1), A_2 + \eps_2 (Q - A_2), A_3 + \eps_3(R-A_3)), \label{eq: Rademacher symmetrization2}
\end{align}
where $\eps_1, \eps_2, \eps_3$ are i.i.d. Rademacher random variables independent of $(P, Q, R)$. Then, we obtain
\begin{align*}
    &\E Q \times_3 R \times_1 P = \E (A_2 + \eps_2 (Q-A_2)) \times_3 (A_3 + \eps_3 (R - A_3)) \times_1 A_1  \\&\qquad + \E (A_2 + \eps_2 (Q-A_2)) \times_3 (A_3 + \eps_3 (R - A_3)) \times_1 \eps_1 (P - A_1) \\&=  \E (A_2 + \eps_2 (Q-A_2)) \times_3 A_3 \times_1 A_1 + \E (A_2 + \eps_2 (Q-A_2)) \times_3 \eps_3 (R - A_3)) \times_1 A_1 \\& =A_2 \times_3 A_3 \times_1 A_1 + \E \eps_2(Q-A_2) \times_3 A_3 \times_1 A_1 = A_2 \times_3 A_3 \times_1 A_1. 
\end{align*}
Hence, to satisfy the assumption~\eqref{eq: lambda condition2} and use~\eqref{eq: hanson-wright bound2}, it is enough to bound expectations $\E f(a, b, c)$ for $(a, b, c) \in \{A_1, G_1\} \times \{A_3, G_3\} \times \{A_2, G_2\}$.

\noindent \textbf{Step 3. Bounding expectations $\E g(\cdot, \cdot, \cdot)$.} Let us start with $g(A_1, A_3, A_2)$. From the definition~\eqref{eq: definition of f}, we have
\begin{align}
        &g(A_1, A_2, A_3)  = \Vert \Sigma^{1/2} \cR^{-1}(A_2 \times_3 A_3 \times_1 A_1)\Sigma^{1/2}\Vert_{\F}^2 \nonumber
        \\ &\le \nS^2 \Vert \cR^{-1}(A_2 \times_3 A_3 \times_1 A_1) \Vert_{\F}^2 = \nS^2 \Vert A_2 \times_3 A_3 \times_1 A_1 \Vert_{\F}^2 = \nS^2, \label{eq: no-gaussians bound}
    \end{align}
    where we used the fact that $A_2$ has unit Frobenius norm and $\Vert A_1 \Vert \le 1,  \Vert A_3 \Vert \le 1$ by the definition of $\bbS_i$. 

    In what follows, it will be useful to rewrite the function $f(A_1, A_2, A_3)$ in different notation. As in the proof of Lemma~\ref{lemma: m1 norm upper bound}, define tensors
    \begin{align*}
        \cS_{p_1 q_1 r_1 p_2 q_2 r_2} & = \Sigma_{(p_1 - 1) qr + (q_1 - 1) r + r_1, (p_2 - 1) qr + (q_2 - 1) r + r_2} \\
        \ttA^{(1)}_{p_2 p_3 j_1} & = (A_1)_{(p_2 - 1) p + p_3, j_1}, \quad \ttA^{(3)}_{r_2 r_3 k_1} = (A_3)_{(r_2 - 1) r + r_3, k_1}, \\
        \ttA^{(2)}_{j_1 q_2 q_3 k_1} & = (A_3)_{j_1, (q_2 - 1) q + q_3, k_1}, \\
        \cG^{(1)}_{p_2 p_3 j_1} & = (G_1)_{(p_2 - 1) p + p_3, j_1}, \quad \cG^{(3)}_{r_2 r_3 k_1} = (G_3)_{(r_2 - 1) r + r_3, k_1}, \\
        \cG^{(2)}_{j_1 q_2 q_3 k_1} &  = (G_3)_{j_1, (q_2 - 1) q + q_3, k_1}.
    \end{align*}
    Then, we obtain
    \begin{align}
        g(A_1, A_2, A_3) & = \Vert \Sigma^{1/2} \cR^{-1}(A_2 \times_3 A_3 \times_1 A_1) \Sigma^{1/2} \Vert_{\F}^2 \nonumber \\ 
        & = \Tr \left (\Sigma \cR^{-1}(A_2 \times_3 A_3 \times_1 A_1) \Sigma \cR^{-\top }(A_2 \times_3 A_3 \times_1 A_1)  \right ) \nonumber \\
        & = \cS_{p_1 q_1 r_1 p_2 q_2 r_2} \ttA^{(1)}_{p_2 p_3 j_1} \ttA^{(2)}_{j_1 q_2 q_3 k_1} \ttA^{(3)}_{r_2 r_3 k_1} \cS_{p_3 q_3 r_3 p_4 q_4 r_4} \ttA^{(1)}_{p_1 p_4 j_2} \ttA^{(2)}_{j_2 q_1 q_4 k_2} \ttA^{(3)}_{r_1 r_4 k_2}. \label{eq: tensor representation 2}
    \end{align}
    Note that the above holds for any $A_i \in \bbL_i$, so the formula remains true when replacing $A_i, \ttA^{(i)}$ with $G_i, \cG^{(i)}$ respectively. 

    Next, we bound $\E g(A_1, A_2, G_3)$. If $\rmvec(G_1) \sim \delta_0$, we have $\E g(A_1, A_2, G_3) = 0$, so it is enough to consider the case $\rmvec(G_3) \sim \cN(0, \sigma_3 I_{d_3 l_3})$. Due to formula~\eqref{eq: tensor representation 2}, it yields
    \begin{align*}
        \E g(A_1, A_2, G_3) & = \E \cS_{p_1 q_1 r_1 p_2 q_2 r_2} \ttA^{(1)}_{p_2 p_3 j_1} \ttA^{(2)}_{j_1 q_2 q_3 k_1} \cG^{(3)}_{r_2 r_3 k_1} \cS_{p_3 q_3 r_3 p_4 q_4 r_4} \ttA^{(1)}_{p_1 p_4 j_2} \ttA^{(2)}_{j_2 q_1 q_4 k_2} \cG^{(3)}_{r_1 r_4 k_2} \\
        & = \sigma_3^2 \delta_{r_2 r_1} \delta_{r_3 r_3} \delta_{k_1k_2} \cS_{p_1 q_1 r_1 p_2 q_2 r_2} \ttA^{(1)}_{p_2 p_3 j_1} \ttA^{(2)}_{j_1 q_2 q_3 k_1}  \cS_{p_3 q_3 r_3 p_4 q_4 r_4 } \ttA^{(1)}_{p_1 p_4 j_2} \ttA^{(2)}_{j_2 q_1 q_4 k_2} \\
        & = \sigma_3^2 \cS_{p_1 q_1 r_1 p_2 q_2 r_1} \ttA^{(1)}_{p_2 p_3 j_1} \ttA^{(2)}_{j_1 q_2 q_3 k_1} \cS_{p_3 q_3 r_3 p_4 q_4r_3} \ttA^{(1)}_{p_1 p_4 j_2} \ttA^{(2)}_{j_2 q_1q_4k_1}.
    \end{align*}
    Define matrices $\widetilde{A}^{(1, j)} \in \R^{p \times p}, \widetilde{A}^{(1, j, k)}$, $i = 1, 2$ and $j = 1, \ldots, J$, by $\widetilde{A}^{(1, j)}_{p_2, p_3} = \ttA^{(1)}_{p_2 p_3 j_1}$ and $\widetilde{A}^{(2, j, k)}_{q_2 ,q_3} = \ttA^{(2)}_{j p_2 p_3 k}$. Then, we have
    \begin{align}
        \E g(A_1, A_2, G_3) & = \sigma_3^2 \cdot \sum_{k_1 \in [l_3]} \Tr \left ( \Tr_3(\Sigma) \sum_{j_1 = 1}^{l_1} \widetilde{A}^{(1, j_1)} \otimes \widetilde{A}^{(2, j_1, k_1)} \right. \nonumber \\
        & \qquad \qquad \qquad \times\left. \Tr_3(\Sigma) \sum_{j_2 = 1}^{l_1} (\widetilde{A}^{(1, j_2)} \otimes \widetilde{A}^{(2, j_2, k_1)})^\top \right ) \nonumber \\
        & \le \sigma_3^2 \sum_{k_1 \in [l_3]}  \left \Vert \Tr_3(\Sigma) \cdot \sum_{j_1 \in [J]} \widetilde{A}^{(1, j_1)} \otimes \widetilde{A}^{(2, j_1, k_1)} \right \Vert_{\F}^2 \nonumber \\
        & \le \sigma_3^2 \Vert \Tr_3(\Sigma) \Vert^2 \cdot \sum_{k_1 \in [l_3]} \Vert \sum_{j_1 \in [l_1]} \widetilde{A}^{(1, j_1)} \otimes \widetilde{A}^{(2, j_1, k_1)} \Vert_{\F}^2, \label{eq: G_3 bound via frobenius tensor product}
    \end{align}
    where we used the Cauchy--Schwartz inequality for the scalar product $\langle A, B \rangle = \Tr(A^\top B) \le \Vert A \Vert_{\F} \Vert B \Vert_{\F}$. Then, we introduce matrices $A'^{(2, k_1)}_{j_1, (q_2 - 1)q + q_3} = \ttA_{j_1 q_2 q_3 k_1}$, $k_1 \in [l_3]$, for which we have
    \begin{align}
        \sum_{k_1 \in [l_3]} \Vert \sum_{j_1 \in [l_1]} \widetilde{A}^{(1, j_1)} \otimes \widetilde{A}^{(2, j_1, k_1)} \Vert_{\F}^2 & = \sum_{k_1 \in [l_3]} \Vert A_1^\top A'^{(2,k_1)} \Vert_{\F}^2 \le \sum_{k_1 \in [l_3]} \Vert A_1^\top \Vert^2 \Vert A'^{(2,k_1)} \Vert^2_{\F} \nonumber \\
        & \le \sum_{k_1 \in [l_3]} \Vert A'^{(2, k_1)} \Vert_{\F}^2 = \Vert A_2 \Vert_{\F}^2 \le 1 \nonumber,
    \end{align}
    where we used $\Vert A_1 \Vert \le 1$ and $\Vert A_2 \Vert_{\F} \le 1$. 
    Substituting the above into~\eqref{eq: G_3 bound via frobenius tensor product} yields
    \begin{align}
        \E g(A_1, A_2, G_3) \le \sigma_3^2 \Vert \Tr_3(\Sigma) \Vert^2. \label{eq: G_3 bound}
    \end{align}
    Analogously, we obtain
    \begin{align}
        \E g(G_1, A_2, A_3) \le
            \sigma_1^2 \Vert \Tr_1(\Sigma) \Vert^2  \label{eq: G_1 bound}
    \end{align}

    Next, we study the term $\E g(A_1, G_2, A_3)$. Obviously, if $\rmvec(G_2) \sim \delta_0$, then $\E g(A_1, G_2, A_3) = 0$, so we consider the case then $\rmvec(G_2) \sim \cN(0, \sigma_3 I_{d_2 l_2})$. Using~\eqref{eq: tensor representation 2} with $G_2$ in place of $A_2$ and defining a matrix $\widetilde{A}^{(3, k_1)} \in \R^{r \times r}$ as $\widetilde{A}^{(3, k_1)}_{r_2 r_3} = \ttA^{(3)}_{r_2 r_3 k_1}$, we obtain
    \begin{align*}
        \E g(A_1, G_2, A_3) & = \E \cS_{p_1 q_1 r_1 p_2 q_2 r_2} \ttA^{(1)}_{p_2 p_3 j_1} \cG^{(2)}_{j_1 q_2 q_3 k_1} \ttA^{(3)}_{r_2 r_3 k_1} \cS_{p_3 q_3 r_3 p_4 q_4 r_4} \ttA^{(1)}_{p_1 p_4 j_2} \cG^{(2)}_{j_2 q_1 q_4 k_2} \ttA^{(3)}_{r_1 r_4 k_2}, \\
        & = \sigma_2^2 \delta_{j_1 j_2} \delta_{q_1 q_2} \delta_{k_1 k_2} \cS_{p_1 q_1 r_1 p_2 q_2 r_2} \ttA^{(1)}_{p_2 p_3 j_1} \ttA^{(3)}_{r_2 r_3 k_1} \cS_{p_3 q_3 r_3 p_4 q_4 r_4} \ttA^{(1)}_{p_1 p_4 j_2} \ttA^{(3)}_{r_1 r_4 k_2} \\
        & = \sigma_2^2 \cS_{p_1 q_1 r_1 p_2 q_1 r_2} \ttA^{(1)}_{p_2 p_3 j_1} \ttA^{(3)}_{r_2 r_3 k_1} \cS_{p_3 q_2 r_3 p_4 q_2 r_4} \ttA^{(1)}_{p_1 p_4 j_1} \ttA^{(3)}_{r_1 r_4 k_1} \\
        & = \sigma_2^2  \sum_{j_1 \in [l_1], k_1 \in [l_3]} \Tr \left ( \Tr_2(\Sigma) \cdot [\widetilde{A}^{(1, j_1)} \otimes \widetilde{A}^{(3, k_1}]\cdot \Tr_2(\Sigma) \cdot [\widetilde{A}^{(1, j_1)} \otimes \widetilde{A}^{(3, k_1}]^\top \right ) \\
        & \le \sigma_2^2 \sum_{j_1 \in [l_1], k_1 \in [l_3]} \Vert \Tr_2(\Sigma)  \cdot [\widetilde{A}^{(1, j_1)} \otimes \widetilde{A}^{(3, k_1)}] \Vert_{\F}^2,
    \end{align*}
    where we used the Cauchy--Schwartz inequality on the last line. It yields
    \begin{align}
        \E g (A_1, G_2, A_3) & \le \sigma_2^2 \Vert \Tr_2(\Sigma) \Vert^2 \sum_{j_1 \in [l_1], k_1 \in [l_3]} \Vert  \widetilde{A}^{(1, j_1)} \otimes \widetilde{A}^{(3, k_1)} \Vert_{\F}^2 \nonumber  \\
        & = \sigma^2_2  \Vert \Tr_2(\Sigma) \Vert^2 \sum_{j_1 \in [l_1], k_1 \in [l_3]} \Vert \widetilde{A}^{(1, j_1)}  \Vert_{\F}^2 \Vert \widetilde{A}^{(3, k_1)} \Vert_{\F}^2 \nonumber \\
        & = \sigma_2^2 \Vert \Tr_2(\Sigma) \Vert^2 \Vert A_1 \Vert_{\F}^2 \Vert A_3 \Vert_{\F}^2 \le \sigma_2^2 l_1 l_3 \Vert \Tr_2(\Sigma) \Vert^2, \label{eq: G_2 bound}
    \end{align}
    where we used $\Vert A_i \Vert_{\F}^2 \le l_i \Vert A_i \Vert^2 \le l_i$ for $i = 1, 3$.

    Next, we bound $\E g(A_1, G_2, G_3)$. If either $\rmvec(G_2) \sim \delta_0$ or $\rmvec(G_3) \sim \delta_0$, then $\E g(A_1, G_2, G_3) = 0$, so we consider the case when both $\rmvec(G_2) \sim \cN(0, \sigma_2^2 I_{d_2 l_2})$ and  $\rmvec(G_3) \sim \cN(0, \sigma_3^2 I_{d_3 l_3})$. Using~\eqref{eq: tensor representation 2} with $G_2, G_3$ in place of $A_2, A_3$, we get
    \begin{align*}
        \E g(A_1, G_2, G_3) & = \E \cS_{p_1 q_1 r_1 p_2 q_2 r_2} \ttA^{(1)}_{p_2 p_3 j_1} \cG^{(2)}_{j_1 q_2 q_3 k_1} \cG^{(3)}_{r_2 r_3 k_1} \cS_{p_3 q_3 r_3 p_4 q_4 r_4} \ttA^{(1)}_{p_1 p_4 j_2} \cG^{(2)}_{j_2 q_1 q_4 k_2} \cG^{(3)}_{r_1 r_4 k_2}, \\
        & = \sigma_2^2 \sigma_3^2 \delta_{k_1 k_1} \cS_{p_1 q_1 r_1 p_2 q_1 r_1} \ttA^{(1)}_{p_2 p_3 j_1} \cS_{p_3 q_3 r_3 p_4 q_3 r_3} \ttA^{(1)}_{p_1 p_4 j_1} \\
        & = \sigma_2^2 \sigma_3^2 l_3 \sum_{j_1 = 1}^{l_1} \Tr \left ( \Tr_{2,3}(\Sigma) \widetilde{A}^{(1, j_1)} \Tr_{2,3}(\Sigma) (\widetilde{A}^{(1, j_1)})^\top \right ) \\
        & \le \sigma_2^2 \sigma_3^2 l_3 \sum_{j_1 = 1}^{l_1} \Vert \Tr_{2,3}(\Sigma) \widetilde{A}^{(1, j_1)} \Vert_{\F}^2 \le \sigma_2^2 \sigma_3^2 l_3 \Vert \Tr_{2,3}(\Sigma) \Vert^2 \sum_{j_1 = 1}^{l_1} \Vert \widetilde{A}^{(1, j_1)} \Vert_{\F}^2 \\
        & = \sigma_2^2 \sigma_3^2 l_3 \Vert \Tr_{2,3}(\Sigma) \Vert^2 \Vert A_1 \Vert_{\F}^2.
    \end{align*}
    Since $\Vert A_1 \Vert_{\F}^2 \le l_1 \Vert A \Vert^2$, we obtain
    \begin{align}
        \E g (A_1, G_2, G_3) \le \sigma_2^2 \sigma_3^2 l_1 l_3 \Vert \Tr_{2,3}(\Sigma) \Vert^2. \label{eq: G_2 G_3 bound}
    \end{align}
    Analogously, we get
    \begin{align}
        \E g (G_1, G_2, A_3) \le \sigma_1^2 \sigma_2^2 l_1 l_3 \Vert \Tr_{1,2}(\Sigma) \Vert^2. \label{eq: G_1 G_2 bound}
    \end{align}
    Then, we bound $\E g (G_1, A_2, G_3)$. Using~\eqref{eq: tensor representation 2} with $G_1, G_3$ in place of $A_1, A_3$, we get
    \begin{align}
        \E g (G_1, A_2, G_3) & = \E \cS_{p_1 q_1 r_1 p_2 q_2 r_2} \cG^{(1)}_{p_2 p_3 j_1} \ttA^{(2)}_{j_1 q_2 q_3 k_1} \cG^{(3)}_{r_2 r_3 k_1} \cS_{p_3 q_3 r_3 p_4 q_4 r_4} \cG^{(1)}_{p_1 p_4 j_2} \ttA^{(2)}_{j_2 q_1 q_4 k_2} \cG^{(3)}_{r_1 r_4 k_2} \nonumber \\
        & = \sigma_1^2 \sigma_3^2 \delta_{p_1 p_2} \delta_{j_1 j_2} \delta_{r_1 r_2} \delta_{k_1 k_2} \delta_{p_3 p_4} \delta_{r_3 r_4}  \nonumber \\
        & \quad \times \cS_{p_1 q_1 r_1 p_2 q_2 r_2} \ttA^{(2)}_{j_1 q_2 q_3 k_1} \cS_{p_3 q_3 r_3 p_4 q_4 r_4} \ttA^{(2)}_{j_2 q_1 q_4 k_2} \nonumber  \\
        & = \sigma_1^2 \sigma_3^2 \cS_{p_1 q_1 r_1 p_1 q_2 r_1} \ttA^{(2)}_{j_1 q_2 q_3 k_1} \cS_{p_3 q_3 r_3 p_3 q_4 r_3} \ttA^{(2)}_{j_1 q_1 q_4 k_1}  \nonumber \\
        & = \sigma_1^2 \sigma_2^2 \sum_{j_1 \in [l_1], k_1 \in [l_3]} \Tr \left ( \Tr_{1,3}(\Sigma) \widetilde{A}^{(2, j_1, k_1)} \Tr_{1,3} (\Sigma) (\widetilde{A}^{(2, j_1, k_1)})^\top \right ). \nonumber
    \end{align}
    By the Cauchy--Schwartz inequality for the matrix product, we obtain
    \begin{align}
        \E g (G_1, A_2, G_3) & \le \sigma_1^2 \sigma_3^2 \sum_{j_1 \in [l_1], k_1 \in [l_3]} \Vert \Tr_{2,3}(\Sigma) \widetilde{A}^{(2, j_1, k_1)} \Vert_{\F}^2 \nonumber \\
        & \le \sigma_1^2 \sigma_3^2 \Vert \Tr_{2,3}(\Sigma) \sum_{j_1 \in [l_1], k_1 \in [l_3]} \Vert \widetilde{A}^{(2, j_1, k_1)} \Vert_{\F}^2 \nonumber \\
        & = \sigma_1^2 \sigma_3^2 \Vert \Tr_{2,3}(\Sigma) \Vert^2 \Vert A_2 \Vert_{\F}^2 = \sigma_1^2 \sigma_3^2 \Vert \Tr_{2,3}(\Sigma) \Vert^2. \label{eq: G_1 G_3 bound}
    \end{align}
    Finally, we bound $\E g (G_1, G_2, G_3)$. If some $G_i$ is distributed according to $\delta_0$, then $\E g(G_1, G_2, G_3) = 0$, so it is enough to consider the case when $G_1, G_2, G_3$ are Gaussian. Using~\eqref{eq: tensor representation 2} with $A_i, \ttA^{(i)}$ replaced by $G_i, \cG^{(i)}$, we obtain
    \begin{align}
        \E g (G_1, G_2, G_3) & = \E \cS_{p_1 q_1 r_1 p_2 q_2 r_2} \cG^{(1)}_{p_2 p_3 j_1} \cG^{(2)}_{j_1 q_2 q_3 k_1} \cG^{(3)}_{r_2 r_3 k_1} \cS_{p_3 q_3 r_3 p_4 q_4 r_4} \cG^{(1)}_{p_1 p_4 j_2} \cG^{(2)}_{j_2 q_1 q_4 k_2} \cG^{(3)}_{r_1 r_4 k_2} \nonumber \\
        & = \sigma_1^2 \sigma_2^2 \sigma_3^2 \delta_{j_1 j_1} \delta_{k_1 k_2} \cS_{p_1 q_1 r_1 p_1 q_1 r_1} \cS_{p_3 q_3 r_3 p_3 q_3 r_3} \nonumber \\
        & = \sigma_1^2 \sigma_2^2 \sigma_3^2 l_1 l_3 \Tr(\Sigma)^2. \label{eq: G_1 G_2 G_3 bound}
    \end{align}
    We summarized obtained bounds on $\E g (\cdot, \cdot, \cdot)$ in Table~\ref{tab: expectations}.
    \begin{table}[ht]
        \centering
        \begin{tabular}{|c|c|c|}
            \hline
            Quantity & Bound &  Ref.  \\
            \hline
            $g (A_1, A_2, A_3)$ & $\Vert \Sigma \Vert^2$ & \eqref{eq: no-gaussians bound} \\
            & & \\
            $\E g (A_1, A_2, G_3)$ & $
                \sigma_3^2 \Vert \Tr_3(\Sigma) \Vert$ & \eqref{eq: G_3 bound} \\
            & & \\
            $\E g (G_1, A_2, A_3)$ & $
                \sigma_1^2 \Vert \Tr_1(\Sigma) \Vert^2$  & \eqref{eq: G_1 bound}  \\
            & & \\
            $\E g (A_1, G_2, A_3)$ & $\sigma_2^2 l_1 l_3 \Vert \Tr_2(\Sigma) \Vert^2$ & \eqref{eq: G_2 bound} \\
            & & \\
            $\E g (A_1, G_2, G_3)$ & $\sigma_2^2 \sigma_3^2 l_1 l_3 \Vert \Tr_{2,3}(\Sigma) \Vert^2$  & \eqref{eq: G_2 G_3 bound} \\
            & & \\ 
            $\E g (G_1, G_2, A_3)$ &  $\sigma_1^2 \sigma_2^2 l_1 l_3 \Vert \Tr_{1,2}(\Sigma) \Vert^2$ & \eqref{eq: G_1 G_2 bound}\\
            & & \\ 
            $\E g (G_1, A_2, G_3)$ & $\sigma_1^2 \sigma_3^2 \Vert \Tr_{2,3}(\Sigma) \Vert^2$  & \eqref{eq: G_1 G_3 bound}\\ 
            & & \\ 
            $\E g (G_1, G_2, G_3)$ & $\sigma_1^2 \sigma_2^2 \sigma_3^2 l_1 l_3 \Tr(\Sigma)^2$ & \eqref{eq: G_1 G_2 G_3 bound}\\ 
            \hline
        \end{tabular}
        \caption{Bounds on $\E g (\cdot, \cdot, \cdot)$.}
        \label{tab: expectations}
    \end{table}

    Combining~\eqref{eq: almost sure triangle upper bound} with bounds~\eqref{eq: no-gaussians bound},\eqref{eq: G_1 bound}-\eqref{eq: G_1 G_2 G_3 bound} implies  the following $\rho_{A_1,A_2,A_3}$-almost surely:
    \begin{align*}
        g (P, Q, R) & \le 64 \left ( \Vert \Sigma \Vert^2 +\sigma_1^2 \sigma_2^2 \sigma_3^2 l_1 l_3 \Tr(\Sigma)^2 \right. \\
        & \left. \quad \quad + 
            \sigma_3^2  \Vert \Tr_3(\Sigma) \Vert^2 + \sigma_2^2 l_1 l_3 \Vert \Tr_2(\Sigma) \Vert^2+ \sigma_1^2 \Vert \Tr_1(\Sigma) \Vert   \right. \\
        & \left. \quad \quad + \sigma_2^2 \sigma_3^2 l_1 l_3 \Vert \Tr_{2,3}(\Sigma) \Vert^2 + \sigma_1^2 \sigma_2^2 l_1 l_3 \Vert \Tr_{1,2}(\Sigma) \Vert^2 + \sigma_1^2 \sigma_3^2 \Vert \Tr_{2,3}(\Sigma) \Vert^2
        \right ).
    \end{align*}
    Finally, we choose $\sigma_1^2, \sigma_2^2, \sigma_3^2$ as follows:
    \begin{align*}
        \sigma_1  = \ttr_1^{-1}(\Sigma), \qquad 
        \sigma_2  = \ttr_2^{-1}(\Sigma) / \sqrt{l_1 l_3}, \qquad
        \sigma_3 = 
            \ttr_3^{-1}(\Sigma).
    \end{align*}
    Then, $\rho_{A_1, A_2, A_3}$-almost surely, we have 
    \begin{align*}
        \Vert \Sigma^{1/2} \cR^{-1}(P \times_1 R \times_3 Q)  \Sigma^{1/2} \Vert_{\F}^2 = f(P, Q, R) \le 2^{12} \Vert \Sigma \Vert^2,
    \end{align*}
    where we used $\Vert \Tr_{S} (\Sigma) \Vert \le \Vert \Sigma \Vert \cdot  \prod_{s \in S} \ttr_s(\Sigma)$ for any non-empty $S$. Hence, if $\lambda$ satisfies
    \begin{align}
        2^6 \lambda \omega \Vert \Sigma \Vert \le 1, \label{eq: lambda final condition2}
    \end{align}
    then~\eqref{eq: lambda condition2} is fulfilled and, due to~\eqref{eq: hanson-wright bound2}, we have
    \begin{align}
        \E_{\rho_{A_1, A_2, A_3}} \log \E_{\bX} \exp f_{\bX}(P, Q, R) \le 2^{12} \lambda^2 \omega^2 \Vert  \Sigma \Vert^2. \label{eq: final bound on the log-moment2}
    \end{align}

        \noindent \textbf{Step 4. Bounding the Kullback-Leibler divergence.} Define $I = \{i \in [3] \mid l_i \ttr_i(\Sigma) > \log |\bbS_i| \}$. Then, for $i \in I$, we have $\cD_i = \operatorname{Uniform}(\bbS_i)$ and the density of $\rho_{A_1, A_2, A_3}$ is given by
        \begin{align*}
            \rho_{A_1, A_2, A_3}(a_1, a_2, a_3) & = \prod_{i \in  I} \delta_0(a_i - A_i) \times \prod_{i \in [3] \setminus I} \frac{\sigma_i^{- l_i d_i}}{(2 \pi)^{l_i d_i/2}} \exp \left \{ - \frac{1}{2 \sigma_i^2} \Vert   a_i - A_i \Vert_{\F}^2 \right \} \\
            & \quad \times \frac{\1 \left \{(a_1 - A_1, a_2 - A_2, a_3 - A_3) \in \Upsilon\right \} }{\Pr((G_1, G_2, G_3) \in \Upsilon)}.
        \end{align*}
        By the definition of $\Upsilon$, $\rho_{A_1, A_2, A_3}$ can be decomposed into product of the truncated Gaussian $\rho_{-I}$ and delta measures $\bigotimes_{i \in I} \delta_{A_i}$. Hence, we have
        \begin{align}
            \KL(\rho_{A_1, A_2, A_3}, \mu) & = \KL(\rho_{-I} \otimes \bigotimes_{i \in I} \delta_{A_i}, \cD_1 \otimes \cD_2 \otimes \cD_3) \nonumber \\
            & = \KL(\rho_{-I}, \bigotimes_{i \in [3] \setminus I} \cD_i) + \sum_{i \in I} \KL(\delta_{A_i}, \operatorname{Uniform}(\bbS_i)) \nonumber \\
            & = \KL(\rho_{-I}, \bigotimes_{i \in [3] \setminus I} \cD_i) + \sum_{i \in I} \log |\bbS_i|. \label{eq: KL decomposition2}
        \end{align}
        Recap that for $i \in [3] \setminus I$, distribution $\cD_i$ is the centered Gaussian with the covariance matrix $\sigma_i^2 I_{d_i l_i}$ up to the reshaping, so the density of $\bigotimes_{i \in I} \cD_i$ is given by
        \begin{align*}
            \mu_{-I}((a_i)_{i \in [3] \setminus I}) = \prod_{i \in [3] \setminus I} \frac{\sigma_i^{-d_i l_i}}{(2\pi)^{d_i l_i/2}} \exp \left ( - \frac{1}{2 \sigma_i^2} \Vert a_i \Vert^2_{\F} \right ).
        \end{align*}
        Hence, we have
        \begin{align*}
            \KL(\rho_{-I}, \otimes_{i \in[3] \setminus I} \cD_i) & = \int_{\prod_{i \in [3] \setminus I} \bbL_i} \rho_{-I}((a_i)_{i \in [3] \setminus I}) \\
            & \qquad  \times \log \left [ \frac{\prod_{i \in [3] \setminus I} \exp \left ( \Vert a_i \Vert_{\F}^2  / 2 \sigma_i^2 - \Vert a_i - A_i \Vert_{\F}^2 / 2 \sigma_i^2\right )}{\Pr((G_1, G_2, G_3) \in \Upsilon)}  \right ] \prod_{i \in [3] \setminus I} \rmd a_i \\
            & = \log \frac{1}{\Pr((G_1, G_2, G_3) \in \Upsilon)} - \sum_{i \in [3] \setminus I} \frac{1}{2 \sigma_i^2}\Vert A_i \Vert_{\F}^2 + \sum_{i \in [3] \setminus I} \frac{1}{\sigma_i^2}\langle \E \bxi^i, A_i \rangle,
        \end{align*}
        where $\bxi^i$ is distributed as the $i$-th marginal of $(P, Q, R) \sim \rho_{A_1, A_2, A_3}$. Using~\eqref{eq: Rademacher symmetrization2} , we get $\E \bxi^i = A_i$, so bound~\eqref{eq: prob of Upsilon complement2} implies
        \begin{align*}
            \KL(\rho_{-I}, \otimes_{i \in [3] \setminus I} \cD_i)  & \le \log 8 + \sum_{i \in [3] \setminus I} \frac{1}{2 \sigma_i^2} \Vert A_i \Vert_{\F}^2 \\
            & \le \log 8 + \frac{1}{2} \sum_{i \in [3] \setminus I} l_i \ttr_i^2(\Sigma),
        \end{align*}
        where we used the definition of $\sigma_i$ and the fact that $\Vert A_i \Vert_{\F}^2 \le l_i \Vert A_i \Vert^2\le l_i$ for $i = 1, 3$. Then, bound~\eqref{eq: KL decomposition2} implies
        \begin{align}
            \KL(\rho_{A_1, A_2, A_3}, \mu) & \le \log 8 + \frac{1}{2} \sum_{i \in [3] \setminus I} l_i \ttr_i^2(\Sigma) + \sum_{i \in I} \log |\bbS_i| \nonumber \\
            & \le \log 8 + \sum_{i = 1}^3 \min\{\ttr_i^2(\Sigma) \cdot l_i, \log |\bbS_i|\}. \label{eq: KL divergence final bound}
        \end{align}

    \textbf{Step 5. Final bound.} Then, we substitute bounds~\eqref{eq: final bound on the log-moment2},\eqref{eq: KL divergence final bound} into~\eqref{eq: PAC-Bayes upper bound2}. It yields
    \begin{align*}
        \sup_{\substack{A_1 \in \bbS_1, \\ A_2 \in \mathbb{S}_{2}, A_3 \in \bbS_3}} \frac{1}{n} \sum_{i = 1}^n \langle \widehat{\cE} \times_3 A_3^\top \times_1 A_1^\top, A_2 \rangle & \le 2^{12} \lambda \omega^2 \Vert \Sigma \Vert^2  \\
        & \quad + \frac{\log 8 + \sum_{i = 1}^3 \min \{\ttr_i(\Sigma) \cdot l_i, \log |\bbS_i|\}  + \log \frac{1}{\delta}}{\lambda n}
    \end{align*}
    with probability at least $1 - \delta$, provided $2^6 \lambda \omega \Vert \Sigma \Vert \le 1$. Since $n \ge \sum_{i = 1}^3 \min\{\ttr_i^2(\Sigma) \cdot l_i, \log |\bbS_i|\} + \log(8/\delta)$, we can choose $\lambda$ as
    \begin{align*}
        \lambda = \frac{1}{2^6 \omega \Vert \Sigma \Vert} \sqrt{\frac{\sum_{i = 1}^3 \min \{\ttr_i^2(\Sigma) \cdot l_i, \log |\bbS_i| \} + \log (8/\delta)}{n}}.
    \end{align*}
    It implies
    \begin{align*}
        \sup_{\substack{A_1 \in \bbS_1, \\ A_2 \in \mathbb{S}_{2}, A_3 \in \bbS_3}} \frac{1}{n} \sum_{i = 1}^n \langle \widehat{\cE} \times_3 A_3^\top \times_1 A_1^\top, A_2 \rangle \le 2^7 \omega \Vert \Sigma \Vert \sqrt{\frac{\sum_{i = 1}^3 \min \{\ttr_i^2(\Sigma) \cdot l_i, \log |\bbS_i| \} + \log (8/\delta)}{n}}
    \end{align*}
    with probability at least $1 - \delta$. This completes the proof.
\end{proof}

\section{Additional Experiments}
\label{section: additional experiments}

\subsection{Tensor-PRLS pseudocode}
\label{section: tensor PRLS pseudocode}

In this section, we give pseudocode for our version of PRLS adopted to order-$3$ tensors. See Algorithm~\ref{algo:tsilikagridis_thresholding}.

\begin{algorithm}[htbp]
\caption{PRLS Thresholding Algorithm}
\label{algo:tsilikagridis_thresholding}
\begin{algorithmic}
\Require Tensor $\cX \in \mathbb{R}^{d_1 \times d_2 \times d_3}$, regularization parameters $\lambda_1$, $\lambda_2$
\Ensure Soft-thresholded tensor $\widehat{\cX}$
\Statex \textbf{Step 1: Mode-1 Unfolding and Thresholding}
\State Reshape initial tensor into matrix: $\cX_{(1)} = \ttm_1(\cX)$
\State Perform SVD of matricization: $U, S, V^\top = \SVD(\cX_{(1)})$
\State Apply soft-thresholding: $S' = \max(S - \lambda_1/2, 0)$
\State Combine soft-thresholded SVD into a matrix: $\widehat{\cX}_{(1)} = U \cdot \diag(S') \cdot V^\top$
\State Reshape back into tensor: $\cX' = \ttm_1^{-1}(\widehat{\cX}_{(1)})$
\Statex \textbf{Step 2: Mode-3 Unfolding and Thresholding}
\State Reshape new approximation into matrix: $\cX_{(3)} = \ttm_3(\cX')$
\State Perform SVD of matricization: $U, S, V^\top = \SVD(\cX_{(3)})$
\State Apply soft-thresholding: $S' = \max(S - \lambda_2/2, 0)$
\State Combine soft-thresholded SVD into a matrix: $\widehat{\cX}_{(3)} = U \cdot \diag(S') \cdot V^\top$
\State Set $\widehat{\cX} = \ttm_3^{-1}(\widehat{\cX}_{(3)})$
\end{algorithmic}
\end{algorithm}

\subsection{Extra experiments on covariance estimation}
\label{subsection:extra_experiments}

Here we study the performance of tensor decomposition algorithms in the setup of Section~\ref{section: experiments}. First, we repeat experiments of Section~\ref{section: experiments} for $n = 4000$, see Table~\ref{tab:covariance_algorithm_comparison-4000 samples}.

\begin{table}[ht]
\centering
\caption{Performance comparison of tensor decomposition algorithms for $n = 4000$. Relative errors were averaged over 16 repeats of the experiment, empirical standard deviation is given after $\pm$ sign. Best results are boldfaced.}
\label{tab:covariance_algorithm_comparison-4000 samples}
\begin{tabular}{lccc}
\toprule
\multirow{2}{*}{Metric} & \multicolumn{3}{c}{Algorithm} \\
\cmidrule(lr){2-4}
 & Sample Mean & TT-HOSVD & HardTTh \\
\midrule
Relative Error &$0.430 \pm 0.007$ & $0.105 \pm 0.008$ & $\mathbf{0.054 \pm 0.002}$ \\
Time (seconds) & $0.0039 \pm 0.0015$ & $0.64 \pm 0.15$ & $3.2\pm 3.3$ \\
\bottomrule
\end{tabular}

\begin{tabular}{lccc}
\toprule
\multirow{2}{*}{Metric} & \multicolumn{3}{c}{Algorithm} \\
\cmidrule(lr){2-4}
 & Tucker & Tucker+HOOI & PRLS \\
\midrule
Relative Error &$0.105 \pm 0.007$ & $\mathbf{0.054 \pm 0.002}$ & $0.217 \pm 0.015$ \\
Time (seconds) & $30.7 \pm 3.9$ & $51.5 \pm 3.9$ & $0.8 \pm 1.1$ \\
\bottomrule
\end{tabular}
\end{table}

Second, we study the dependence of $\sin \Theta$-distance of estimated singular subspaces to singular subspaces of matricizations of $\cT^*$ on the number of iterations $T$ and the sample size $n$. Matrices $\widehat{U}_0, \widehat{U}_T, \widehat{V}_0, \widehat{V}_T$ are defined in Algorithm~\ref{algo: order 3 TT-SVD}. As before, the number of additional iterations is taken $10$. The results are presented in Table~\ref{tab:sin theta study}.

\begin{table}[ht]
    \centering
    \caption{The study of $\sin \Theta$-distance from estimated singular subspaces to singular subspaces of matricizations of $\cR(\Sigma)$. Average errors and standard deviations are obtained after 16 repeats of the experiment. The setup is defined in Section~\ref{section: experiments}.}
    \label{tab:sin theta study}
    \begin{tabular}{l|ccccc}
    \toprule
         & $n = 500$  & $n = 2000$ & $n = 5000$ & $n = 6000$ & $n = 7000$ \\
         \hline
         $\sin \Theta(\Img \widehat{U}_0, \Img U^*)$ &  $1.0 \pm 0.0$ & $1.0 \pm 0.0$ & $0.8 \pm 0.3$ & $0.8 \pm 0.2$ & $0.6 \pm 0.3$ \\
          $\sin \Theta(\Img \widehat{V}_0, \Img V^*)$ & $1.0 \pm 0.0$ & $1.0 \pm 0.0$ & $1.0 \pm 0.0$ & $0.90 \pm 0.14$ & $0.9 \pm 0.2$ \\
         $\sin \Theta(\Img \widehat{U}_T, \Img U^*)$ & $1.0 \pm 0.0$ & $0.33 \pm 0.08$ & $0.17 \pm 0.04$ & $0.13 \pm 0.03$ & $0.13 \pm 0.02$ \\
         $\sin \Theta(\Img \widehat{V}_T, \Img V^*)$ & $1.0\pm  0.0$ & $0.46 \pm 0.17$ & $0.21 \pm 0.03$ & $0.18 \pm 0.05$ & $0.17 \pm 0.02$ \\
    \bottomrule
    \end{tabular}
\end{table}

For scalability study we increase the number of parameters from $10^6$ to $7.4\cdot 10^8$ for $1000$ samples. One can see that our methods scales successfully, even winning comparison with Tucker+HOOI. The results are shown in Table ~\ref{tab:covariance_algorithm_comparison_gigantic}. Next, we increase number of parameters up to $4\cdot 10^9$ for $1000$ and $2000$ samples. Unfortunately, Tucker+HOOI does not show ability for scaling due to enormous time overhead, so results in Table \ref{tab:covariance_algorithm_comparison_enormous} are provided excluding it.

\begin{table}[ht]
\centering
\caption{Performance comparison of tensor decomposition algorithms for $n = 1000$, $p=q=r=30$. Best results are boldfaced.}
\label{tab:covariance_algorithm_comparison_gigantic}
\begin{tabular}{lccc}
\toprule
\multirow{2}{*}{Metric} & \multicolumn{3}{c}{Algorithm} \\
\cmidrule(lr){2-4}
 & Sample Mean & TT-HOSVD & HardTTh \\
\midrule
Relative Error & $4.448$ & $0.216$ & $\mathbf{0.065}$ \\
Time (seconds) & $3.504$ & $867.756$ & $1007.069$ \\
\bottomrule
\end{tabular}

\begin{tabular}{lccc}
\toprule
\multirow{2}{*}{Metric} & \multicolumn{3}{c}{Algorithm} \\
\cmidrule(lr){2-4}
 & Tucker & Tucker+HOOI & PRLS \\
\midrule
Relative Error &$0.192$ & $0.110$ & $4.422$ \\
Time (seconds) & $14601.442$ & $31256.665$ & $703.230$ \\
\bottomrule
\end{tabular}
\end{table}

\begin{table}[ht]
\centering
\caption{Performance comparison of tensor decomposition algorithms for $p=q=r=40$. Best results are boldfaced.}
\label{tab:covariance_algorithm_comparison_enormous}

\begin{tabular}{lccccc}
\toprule
\multirow{2}{*}{Metric (n = 1000)} & \multicolumn{5}{c}{Algorithm} \\
\cmidrule(lr){2-6}
 & Sample Mean & TT-HOSVD & HardTTh & Tucker & PRLS \\
\midrule
Relative Error & 6.86 & 0.21 & \textbf{0.055}& 0.19 & 6.82 \\
Time (seconds) & 20.94 & 5095.54 & 6873.25 & 84360.27 & 5872.09 \\
\bottomrule
\end{tabular}

\begin{tabular}{lccccc}
\toprule
\multirow{2}{*}{Metric (n = 2000)} & \multicolumn{5}{c}{Algorithm} \\
\cmidrule(lr){2-6}
 & Sample Mean & TT-HOSVD & HardTTh & Tucker & PRLS \\
\midrule
Relative Error & 4.87 & 0.19 & \textbf{0.038}& 0.1845 & 4.83 \\
Time (seconds) & 20.63 & 5839.20 & 6889.42 & 84476.38 & 5825.16 \\
\bottomrule
\end{tabular}
\end{table}

We provide ablation study on the effect of ranks on the error rate. We expect that large increase of ranks leads to broken spectral gap condition, thus, models takes part of the noise as vital information. Large decrease leads to loss of vital information, since relevant singular values may be erased. Despite that, small perturbation in ranks may lead to better bias-variance tradeoff, thus, decreasing error overall. See Figure \ref{fig:rank_ablation} for details.

\begin{figure}[ht]
\centering
\includegraphics[width=0.60\textwidth]{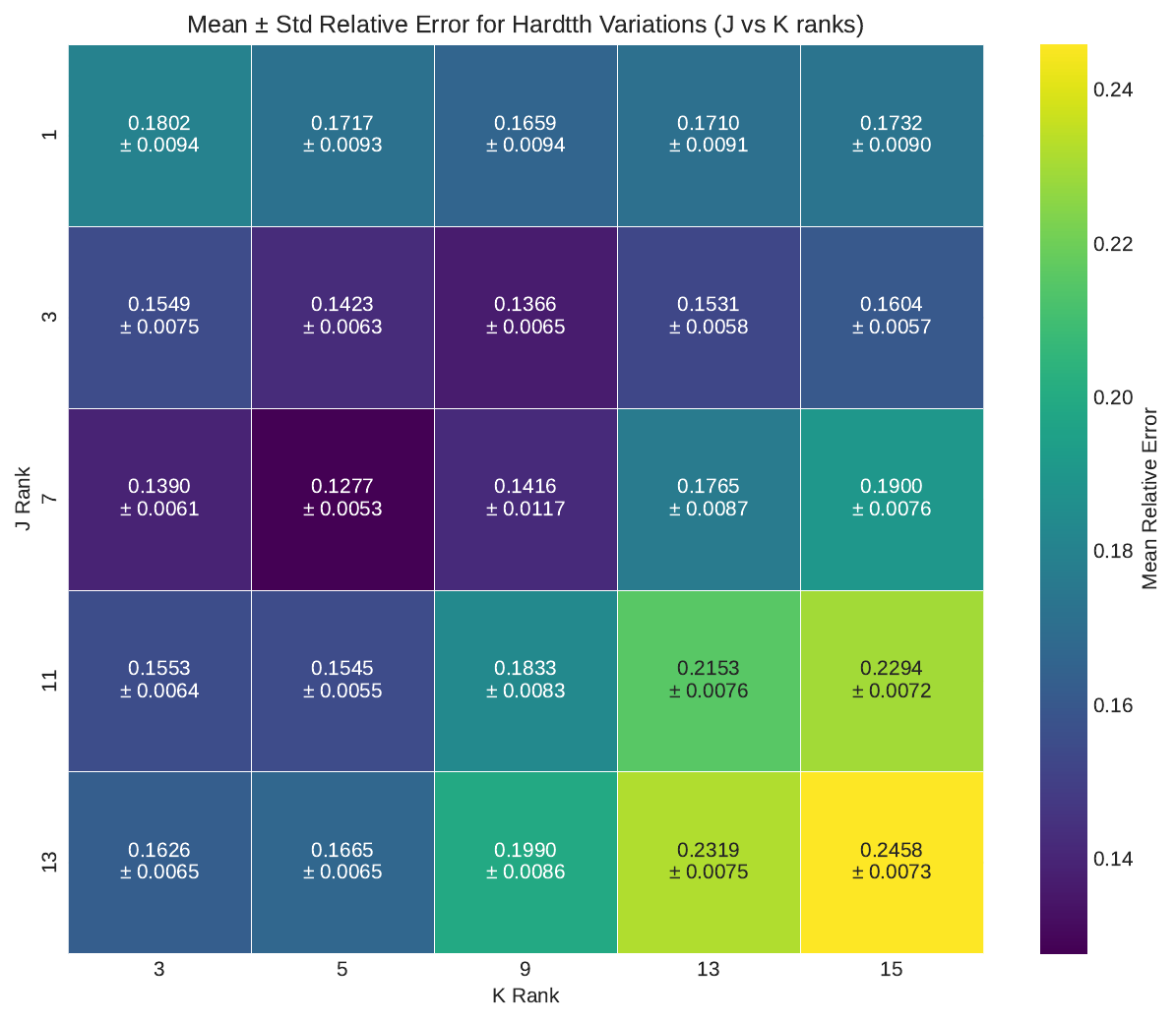}
\caption{Rank ablation study for covariance with parameters $(J, K) = (7, 9)$, $p=q=r=10$, averaged by 32 runs.}
\label{fig:rank_ablation}
\end{figure}

\subsection{Experiments on tensor estimation}

 This section is devoted to experiments that did not have enough space in the main text. In particular, we numerically study the impact of additional iterations of Algorithm~\ref{algo: order 3 TT-SVD} in the tensor estimation problem. We do not consider the misspecified case, and, given $(J, K)$ and $p, q, r$, generate $\cT^*$ as follows. First, we generate matrices $U_j, W_{jk}, V_k$ from model \eqref{eq:kronecker_tt_model} according to the matrix initialize method - random, random symmetric, symmetric with special spectrum decay (i.e. inverse quadratic, exponential, linear, etc.). We will refer to these matrices $U_j, W_{jk}, V_k$ as sub-components of matrix 
\begin{align*}
    S = \sum_{j = 1}^J \sum_{k = 1}^K U_j \otimes W_{jk} \otimes V_k \in \R^{pqr \times p qr }, 
\end{align*}
and reshape it to a tensor $\cT^* = \cR(S)$. It is ease to see that such procedure is equivalent to the direct assignment of TT factors, due to Equation \eqref{eq:kronecker_tt_rearranged}.
Then, choosing a noise level $\sigma$, we generate a noise tensor $\widehat{\cE}$ as a random normal with $\sigma$ as its standard deviation and compute
\begin{align*}
    \cY = \cT^* + \widehat{\cE}.
\end{align*}

Our code supports some other testing regimes:  one can choose the $S$ structure directly (block-Toeplitz, structure \eqref{eq:kronecker_product_model}, etc.) supporting misspecification case, and rank selection method (via hard thresholding, effective rank, absolute error). For more information on rank selection see display \eqref{eq: definition of J estimator}.

For the specific experiment, we vary the algorithms to test, as well as the actual ranks and sizes of the components $U_j, W_{jk}, V_k$. For PRLS algorithm, due to its special setup, we tune $\lambda_1, \lambda_2$ parameters on a log-scale. In the Table \ref{tab:algorithm_comparison} one can see, that our method also shows less variance, compared to the previous algorithms, such as sample mean or Algorithm \ref{algo:tsilikagridis_thresholding} with noise variance equal to 0.3.

\begin{table}[ht]
\centering
\caption{Performance comparison of tensor decomposition algorithms under medium noise conditions. The best results are boldfaced.}
\label{tab:algorithm_comparison}
\begin{tabular}{lccc}
\toprule
\multirow{2}{*}{Metric} & \multicolumn{3}{c}{Algorithm} \\
\cmidrule(lr){2-4}
 & Sample Mean & TT-HOSVD & HardTTh \\
\midrule
Relative Error & $0.3643 \pm 0.0135$ & $0.0449 \pm 0.0018$ & $\mathbf{0.0357 \pm 0.0015}$ \\
Time (seconds) & $0.0204 \pm 0.0096$ & $4.4732 \pm 1.8079$ & $7.5522 \pm 2.1386$ \\
\bottomrule
\end{tabular}

\vspace{0.3cm}

\begin{tabular}{lccc}
\toprule
\multirow{2}{*}{Metric} & \multicolumn{3}{c}{Algorithm} \\
\cmidrule(lr){2-4}
 & Tucker & Tucker+HOOI & PRLS \\
\midrule
Relative Error & $0.0439 \pm 0.0016$ & $\mathbf{0.0357 \pm 0.0015}$ & $0.1130 \pm 0.0037$ \\
Time (seconds) & $56.7830 \pm 16.3132$ & $106.5766 \pm 25.2531$ & $0.7076 \pm 0.1160$ \\
\bottomrule
\end{tabular}
\end{table}

Now consider the case of a low SNR setting (high-noise regime, fast spectrum decay). This case violates the assumptions of Theorem \ref{theorem: Sigma estimator performance}. It can be seen that the methods perform poorly and do not restore the signal (the relative error remains at the level of 0.3), thus, demonstrating the necessity of theorem's conditions. The experiment below was conducted for the case when sub-components of $S$ spectra decrease as inverse square sequence (see Table \ref{tab:quadratic_decay} for details).

\begin{table}[ht]
\centering
\caption{Performance of tensor decomposition algorithms under inverse quadratic decay of spectrum. In case of low SNR we observe that iterative methods perform worse than one-shot and both do not restore signal. The best result is boldfaced.}
\label{tab:quadratic_decay}
\begin{tabular}{lccc}
\toprule
\multirow{2}{*}{Metric} & \multicolumn{3}{c}{Algorithm} \\
\cmidrule(lr){2-4}
 & Sample Mean & TT-HOSVD & HardTTh \\
\midrule
Relative Error & $0.3508 \pm 0.0004$ & $\mathbf{0.0251 \pm 0.0001}$ & $0.0279 \pm 0.0003$ \\
Time (seconds) & $0.0509 \pm 0.0166$ & $13.9748 \pm 4.1845$ & $282.7375 \pm 145.8327$ \\
\bottomrule
\end{tabular}
\end{table}

It may be useful to examine the spectrum of matrix $S$ and matricizations in order to understand how the behavior of algorithms varies in different scenarios. Figure \ref{fig:spectrum_comparison} illustrates this. These plots were constructed for tensor-train rank $(J,K)$ pairs of 7 and 9, respectively, with sub-components having a size of $10\times10$. The total matrix size was $1000 \times 10000$, composed of these sub-components.

\begin{figure}[ht]
\centering
\begin{minipage}[b]{0.4\linewidth}
    \centering
    \includegraphics[width=\linewidth, height=4cm]{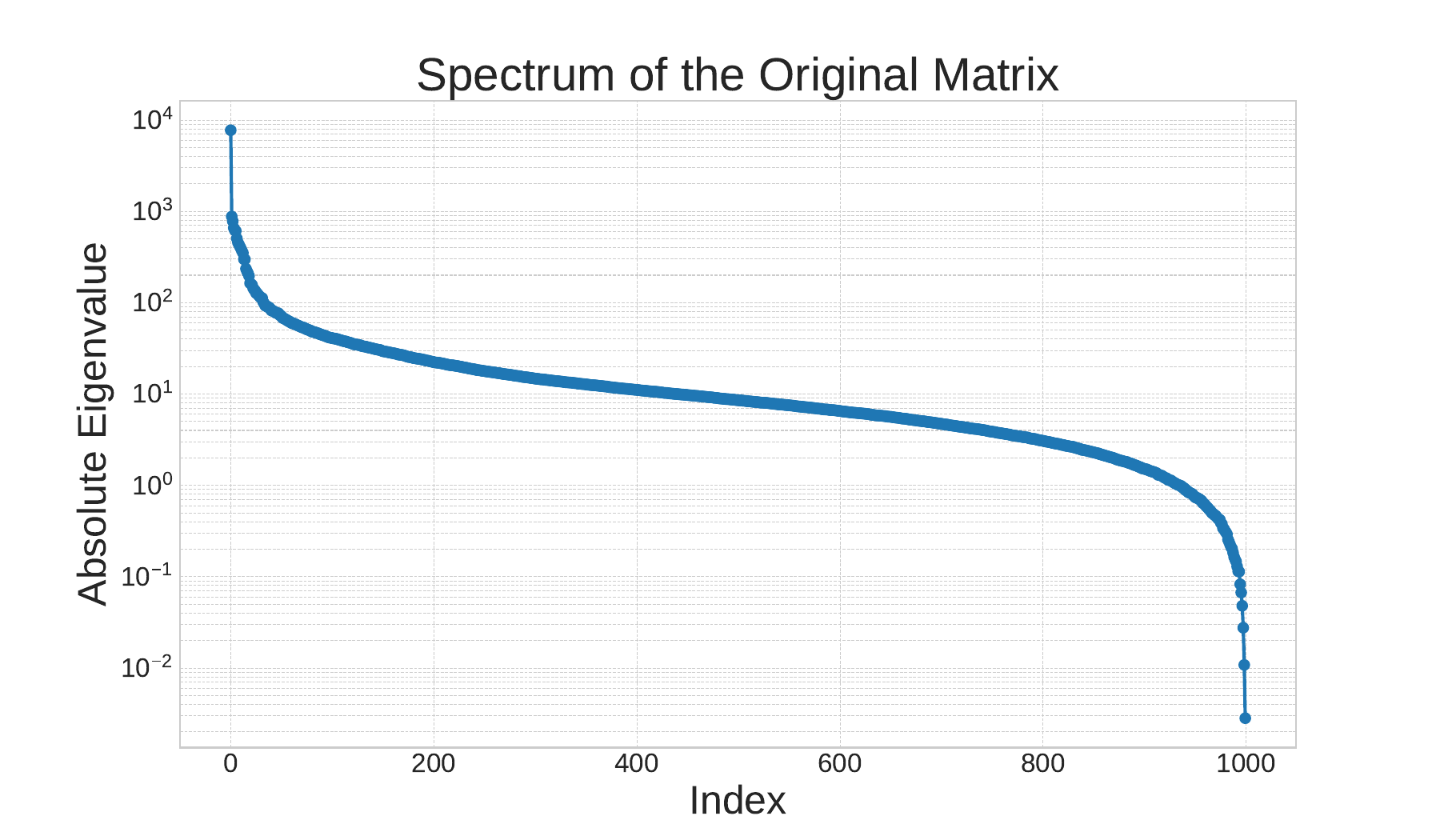} 
    \caption*{(a) Matrix $S$ spectrum}
\end{minipage}
\hfill
\begin{minipage}[b]{0.58\linewidth}
    \centering
    \includegraphics[width=\linewidth]{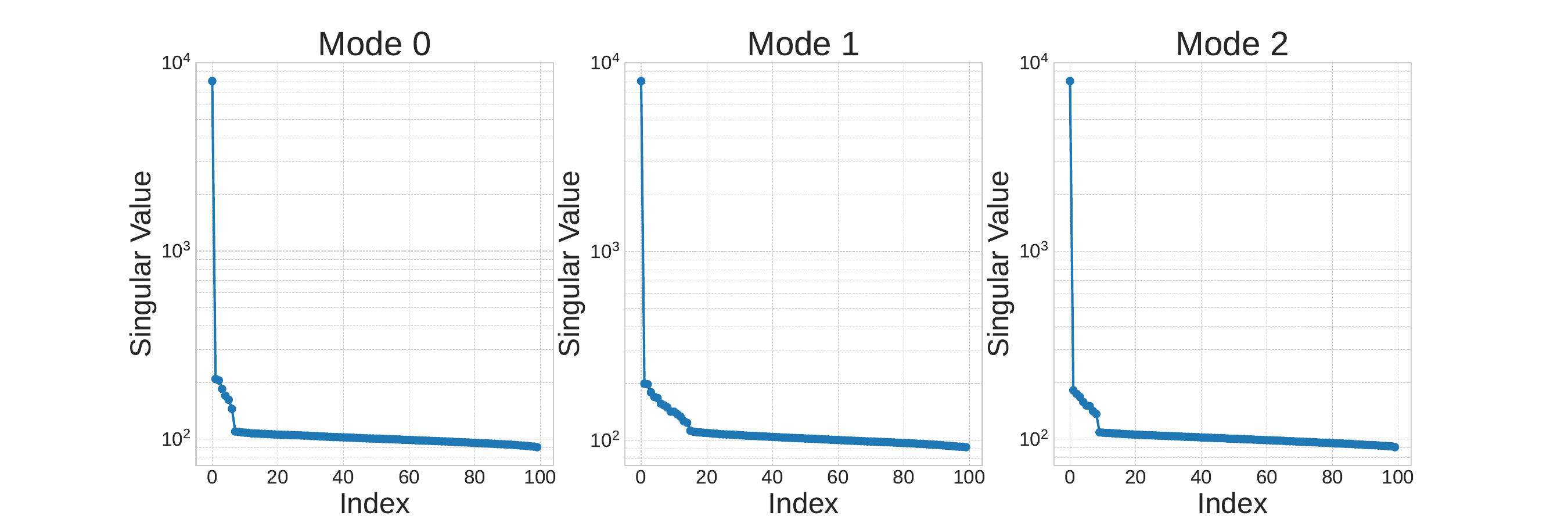}
    \caption*{(b) Singular values of matricizations}
\end{minipage}
\caption{Spectrum of the objectives in case of random sub-components. As one can see, dense spectrum of matrix $S$ with noise become separable for matricizations.}
\label{fig:spectrum_comparison}
\end{figure}

To experimentally confirm the necessity of the conditions of our theorem, we plotted the relationship between singular values and noise levels, as well as the relative error and noise levels. Our findings indicate that, after a certain threshold, our algorithm no longer effectively mitigate noise but instead overfit to it, resulting in inferior performance compared to one-step methods such as TT-HOSVD (see Figure \ref{fig:noise_increasing}).

\begin{figure}[ht]
\centering
\includegraphics[width=0.6\textwidth]{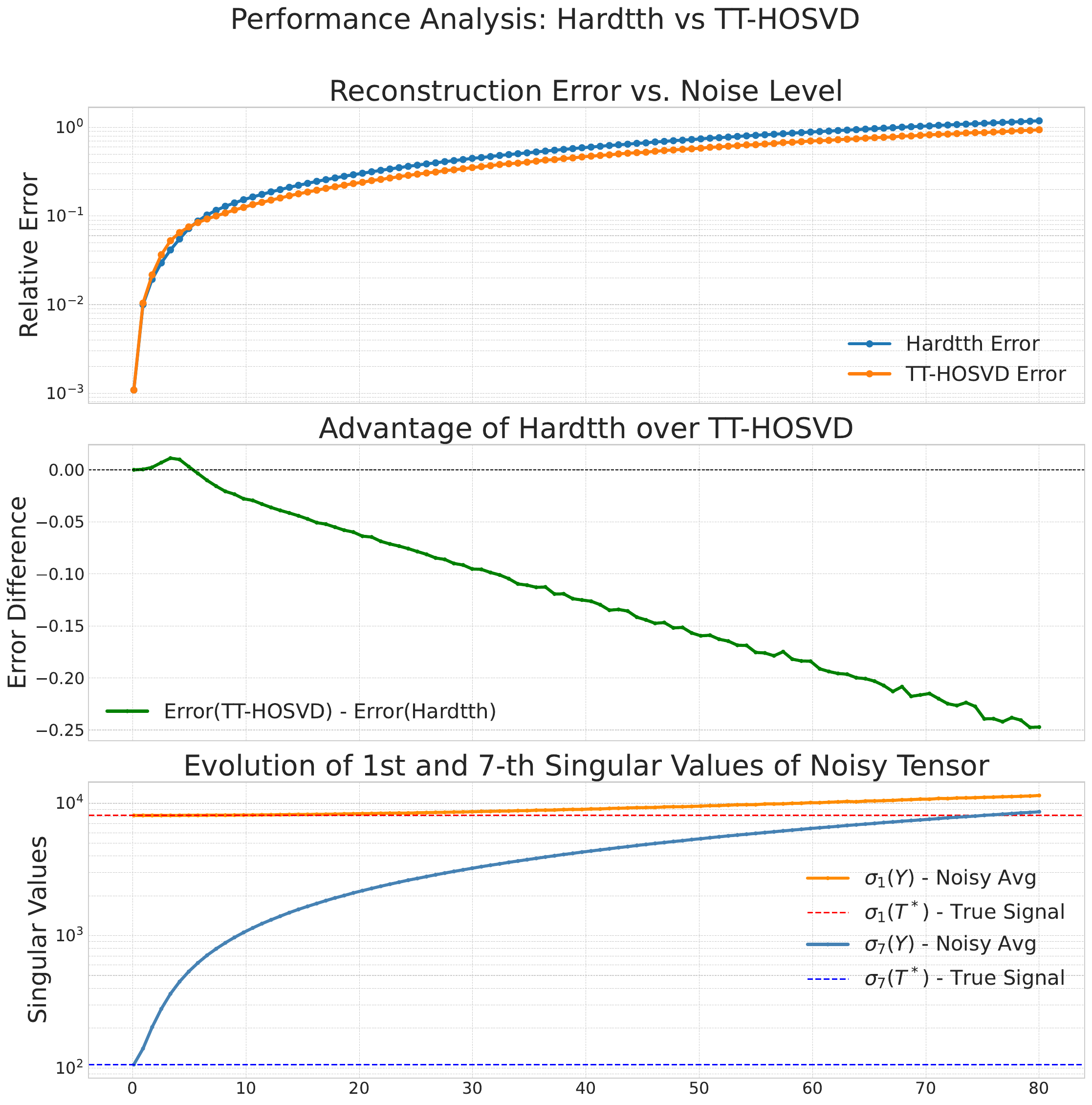}
\caption{Performance of tensor decomposition algorithms and spectrum behavior under noise increase.}
\label{fig:noise_increasing}
\end{figure}

\end{document}